 \numberwithin{equation}{section}
\definecolor{darkgreen}{rgb}{0,0.45,0} 
\newcommand{\act}{\triangleright}
\newcommand{\Hopf}{{\sf Hopf}\xspace}
\newcommand{\HKer}{{\sf HKer}\xspace}
\newcommand{\ox}{\otimes}
\newcommand{\C}{\mathcal{C}}
\newcommand{\ot}{\otimes}
\renewcommand{\phi}{\varphi}
  \newtheorem{proposition}{Proposition}[section]
  \newtheorem{lemma}[proposition]{Lemma}
  \newtheorem{corollary}[proposition]{Corollary}
  \newtheorem{theorem}[proposition]{Theorem}
  \theoremstyle{definition}
  \newtheorem{definition}[proposition]{Definition}
  \newtheorem{examples}[proposition]{Examples}
  \newtheorem{remark}[proposition]{Remark}
\title{Crossed squares of cocommutative Hopf algebras}
\author{Florence Sterck}
\address{Institut de Recherche en Math\'ematique et Physique, Universit\'e catholique de Louvain, Chemin du Cyclotron 2, 1348 
Louvain-la-Neuve \and D\'epartement de Math\'ematique, Universit\'e Libre de Bruxelles, Campus de la Plaine – CP 210
Boulevard du Triomphe
1050 Bruxelles, Belgium }
\email{florence.sterck@uclouvain.be}
\keywords{Cocommutative Hopf algebras, crossed modules, crossed squares, internal groupoids, crossed Lie algebras.} 
\begin{document}

\begin{abstract}
In this paper, we define the notion of \textit{Hopf crossed square} for cocommutative Hopf algebras extending the notions of crossed squares of groups and of Lie algebras. We prove the equivalence between the category of Hopf crossed squares and the category of double internal groupoids in the category of cocommutative Hopf algebras. The Hopf crossed squares turn out to be the internal crossed modules in the category of crossed modules in the category of cocommutative Hopf algebras.
\end{abstract} 

\maketitle

\section*{Introduction}

Motivated by the study of the properties of the boundary map
\[ \partial \colon \pi_2(X,A,x) \to \pi_1(A,x), \] 
relating the second relative homotopy group and the first homotopy group of a pair of pointed topological spaces, J.H.C Whithead introduced the new notion of crossed modules in 1949 \cite{Whitehead}.
A crossed module of groups is given by a morphism of groups 
\[ d : X \to B,\]
endowed with an action of $B$ on $X$ in a way that is ``compatible'' with the action given by conjugation (see Definition \ref{xmod grp}). 
Later on, the notion of crossed module has attracted a lot of attention as an algebraic structure on its own. R. Brown and C. Spencer \cite{BS} proved the equivalence between crossed modules of groups and internal groupoids in the category of groups (a result that they credit to Verdier).
 On the one hand, a crossed module of groups can be seen as the ``normalization'' of an internal groupoid. Indeed, if 
  \begin{equation}\label{rgintro}
  \begin{tikzpicture}[descr/.style={fill=white},scale=1.2,baseline=(A.base)]
\node (A) at (0,0) {$A_1$};
\node (D) at (3,0) {$A_0$};
 \path[->,font=\scriptsize]
([yshift=7pt]A.east) edge node[above] {$\delta$} ([yshift=7pt]D.west)
(D.west) edge node[descr] {$\iota$} (A.east)
([yshift=-7pt]A.east) edge node[below] {$\gamma$} ([yshift=-7pt]D.west);
\end{tikzpicture}
\end{equation} is the reflexive graph underlying an internal groupoid structure in the category of groups ($\delta$ is the ``domain morphism'', $\gamma$ the ``codomain morphism'', $\iota$ the ``identity morphism'', see Definition \ref{grpd}), its normalization 
\[  \gamma \cdot ker(\delta) \colon Ker(\delta) \to A_0 \] 
is a crossed module of groups. On the other hand, we can construct an internal groupoid from a crossed module of groups thanks to the semi-direct product. When $d \colon X \rightarrow B$ is a crossed module of groups, the following diagram   \[\begin{tikzpicture}[descr/.style={fill=white},scale=1.2,baseline=(A.base)]
\node (A) at (0,0) {$X \rtimes B$};
\node (D) at (3,0) {$B$};
 \path[->,font=\scriptsize]
([yshift=7pt]A.east) edge node[above] {$p_2$} ([yshift=7pt]D.west)
(D.west) edge node[descr] {$ i_2$} (A.east)
([yshift=-7pt]A.east) edge node[below] {$ p_1 $} ([yshift=-7pt]D.west);
\end{tikzpicture},\] where $p_2$ is the projection on $B$, $i_2$ is the inclusion of $B$ and $p_1$ is defined as $p_1(x,b) = d(x)b$, is underlying a unique internal groupoid structure in the category of groups. This equivalence relates two interesting notions and allows one to deal  with the concept of internal groupoid in an alternative way, that is useful for computations.

In \cite{LR} R. Lavendhomme and J.-R. Roisin were interested in the cohomology of algebraic structures, and they showed a similar equivalence in the context of Lie algebras and of rings. G. Guin-Waléry and J.L. Loday in \cite{GWL,Loday} studied the relation between crossed modules and the so-called \emph{1-cat-groups}, which are reflexive graphs as in \eqref{rgintro} such that the kernels of $\delta$ and $\gamma$ have a trivial commutator: $ [\ker(\delta), \ker(\gamma)] = \{1\}$. These authors also highlighted a relation between what they called \emph{crossed squares of groups} and \emph{2-cat-groups}.  A few years later G. Ellis introduced the notion  of \emph{${cat}^2$-Lie algebra} (and, more generally, of  \emph{$cat^n$-Lie algebra}) and obtained analogous results for Lie algebras \cite{Ellis}. More recently, G. Janelidze discovered that the theory of semi-abelian categories offers an adequate framework to internalize the notion of crossed modules. In 2003, besides defining the abstract notion of internal crossed module, he proved that there exists an equivalence between internal crossed structures and internal groupoids in any semi-abelian category \cite{Jan}, whose main examples are the category of groups and of Lie algebras. These categories are again semi-abelian \cite{BG2}, so the construction of crossed modules can be repeated. One obtains a ``2-dimensional" version of the preceding result providing an equivalence between internal double crossed modules and internal double groupoids. 

This higher dimensional equivalence (in groups) allows one to consider a double groupoid in the category of groups
\[
\begin{tikzpicture}[descr/.style={fill=white},scale=1]
\node (A) at (0,0) {$G_1$};
\node (B) at (0,3) {$G_{\{1,2\}}$};
\node (C) at (3,3) {$G_2$};
\node (D) at (3,0) {$G_\emptyset ,$};
 \path[->,font=\scriptsize]
 (C.west) edge node[descr] {$\iota_2$} (B.east)
(A.north) edge node[descr] {$\iota_1$}
 (B.south)
 (D) edge node[descr] {$\iota'_1$}
 (C)
 (D.west) edge node[descr] {$\iota'_2$} (A.east)
 ([xshift=7pt]B.south) edge node[right] {$\gamma_1$} ([xshift=7pt]A.north)
([xshift=-7pt]B.south) edge node[left] {$\delta_1$} ([xshift=-7pt]A.north)
 ([xshift=7pt]C.south) edge node[right] {$\gamma'_1$} ([xshift=7pt]D.north)
([xshift=-7pt]C.south) edge node[left] {$\delta'_1$} ([xshift=-7pt]D.north)
([yshift=7pt]A.east) edge node[above] {$\gamma'_2$} ([yshift=7pt]D.west)
([yshift=-7pt]A.east) edge node[below] {$\delta'_2$} ([yshift=-7pt]D.west)
([yshift=7pt]B.east) edge node[above] {$\gamma_2$} ([yshift=7pt]C.west)
([yshift=-7pt]B.east) edge node[below] {$\delta_2$} ([yshift=-7pt]C.west);
\end{tikzpicture}
\] as a commutative square with compatible actions - called \emph{crossed square of groups} -
\[ 
\begin{tikzpicture}[descr/.style={fill=white},scale=0.8]
\node (A) at (0,0) {$M'$};
\node (B) at (0,2) {$L$};
\node (C) at (2,2) {$M$};
\node (D) at (2,0) {$P.$};
  \path[-stealth]
 (B.south) edge node[right] {$ \lambda'$} (A.north) 
 (C.south) edge node[right] {$ \mu$} (D.north)
(A.east) edge node[above] {$ \mu'$} (D.west)
(B.east) edge node[above] {$ \lambda$} (C.west);
\end{tikzpicture}
\]
The quite complicated structure of double groupoid is equivalent to a commutative square of groups with some suitably defined and compatible actions, which turns out to be easier to handle in some relevant cases. In particular, in \cite{EllisC}, the author used the notion of crossed squares of groups to cover the homotopy classification of maps from a 3-dimensional connected CW-space $X$ to some other CW-space. In \cite{BL}, R. Brown and J.L. Loday developed a Van Kampen theorem for crossed squares of groups.

In a recent paper \cite{DV}, the authors used the non-explicit construction of the internal crossed squares in semi-abelian categories to develop an internal approach to the non-abelian tensor product in semi-abelian categories, in order to extend the classical non-abelian tensor product of groups \cite{BL}.

Furthermore, it was proved in \cite{GSV} (see also \cite{GKV}) that the category of cocommutative Hopf algebras over a field is semi-abelian. Thanks to this result and the theory of G. Janelidze, an equivalence between internal crossed modules and internal groupoids exists. In \cite{GSV} we used that equivalence to describe the notion of internal crossed modules of cocommutative Hopf algebras (in the sense of G. Janelidze), which turned out to coincide with the definition given independently by \cite{Majid} and by \cite{Vilaboa} in the case of cocommutative Hopf algebras. 

Motivated by this description of internal crossed modules of cocommutative Hopf algebras, our goal is to go one step further and describe internal \emph{Hopf crossed squares} of cocommutative Hopf algebras in order to obtain an equivalent notion to the one of double groupoid in the category of cocommutative Hopf algebras, $\sf Hopf_{K,coc}$. In this paper we define the suitable crossed structure corresponding to double groupoids in $\sf Hopf_{K,coc}$, and then show the equivalence between the corresponding categories. 
Furthermore, we obtain the definition of crossed squares of groups and of Lie algebras as special case. For the sake of convenience, we use other descriptions of internal groupoids or double groupoids, respectively \emph{cat$^1$-Hopf algebras} and \emph{cat$^2$-Hopf algebras}, since the categorical commutator can be easily expressed in our context by using a recent observation made in \cite{GSV}. This notion of cat$^2$-Hopf algebras is  easier to deal with than  the one of double internal groupoid, since it avoids the description of pullbacks.

The layout of this article is as follows: the first section contains some preliminaries and definitions. The second section is devoted to the definition of \emph{Hopf crossed square}, to some of its properties and to the relation with the concepts of crossed square of groups and of Lie algebras. In the last section we prove that this notion is indeed equivalent to the one of double internal groupoid. We obtain the equivalence between the notion of $2$-fold split epimorphisms and \emph{Hopf $2$-actions} as an intermediate result. In the main body of the paper we focus on the main results and the idea of the proofs, all technical proofs are collected in the appendices, for the convenience of the reader.

\section*{Acknowledgements}

The author would like to thank her supervisors Professor Marino Gran (UCLouvain) and Professor Joost Vercruysse (ULB) for all the advice in the realization of this paper. The
author thanks the anonymous referee for his/her useful remarks. The author's research is supported by a FRIA doctoral grant no. 27485 of the \textit{Communaut\'e fran\c{c}aise de Belgique}.

\section{Preliminaries}\label{preli}
We start our preliminaries by the definitions of some algebra structures over a field $K$: coalgebra, bialgebra and Hopf algebra. We refer to \cite{A,Sweedler} for more details about the basis of the theory of Hopf algebras.
\subsection{Hopf algebras}
A \emph{$K$-coalgebra} is the dual notion of the notion of $K$-algebras. More precisely, it is a vector space $C$ endowed with a comultiplication $\Delta \colon C\to C\ot C$ and a counit $\epsilon \colon C\to K$ satisfying the coassociativity and the counitality conditions: \[(Id\ox \Delta)\cdot \Delta=(\Delta\ox Id)\cdot\Delta,\] \[ (Id\ox \epsilon)\cdot \Delta=Id=(\epsilon\ox Id)\cdot\Delta.\]
We use the classical Sweedler notation for calculations with the comultiplication,
and we write $\Delta(c)=c_1\ox c_2$ for any $c\in C$ (with the usual summation convention, where $c_1\ot c_2$ stands for $\sum c_1\ot c_2$). 

Coassociativity and counitality can then be expressed by the formulas
\begin{eqnarray*}
&c_1\ot c_{2_1}\ot c_{2_2}=c_{1_1}\ot c_{1_2}\ot c_2=c_1\ot c_2\ot c_3,\\
&c_1\epsilon(c_2)=c=\epsilon(c_1)c_2.
\end{eqnarray*}
 
A \emph{morphism of $K$-coalgebras} $f : C \rightarrow D$ is a linear map such that for any $c$ in $C$, 
\begin{eqnarray*}
&f(c)_1 \ox f(c)_2 = f(c_1) \ox f(c_2),\\
&\epsilon \cdot f(c) = \epsilon(c).
\end{eqnarray*}
  
We recall that a \emph{$K$-bialgebra} $(A,M,\eta_A,\Delta,\epsilon)$ is a $K$-algebra $A$ with multiplication $M \colon A \ot A\to A$ and unit $\eta_A \colon K\to A \colon 1 \mapsto 1_A$ that is at the same time a $K$-coalgebra with comultiplication $\Delta \colon A\to A\ot A$ and counit $\epsilon \colon A \to K$ such that $M$ and $\eta_A$ are coalgebra morphisms or, equivalently, $\Delta$ and $\epsilon$ are algebra morphisms, which can be expressed in the Sweedler notation as
\begin{eqnarray*}
(ab)_1\ot (ab)_2 = a_1b_1\ot a_2b_2 && 1_{A_1}\ot 1_{A_2}=1_A\ot 1_A\\
\epsilon(ab)=\epsilon(a)\epsilon(b) && \epsilon(1_A)=1.
\end{eqnarray*}
 
A \emph{Hopf $K$-algebra} is a sextuple $(A,M,\eta_A,\Delta,\epsilon,S)$ where $(A,M,\eta_A,\Delta,\epsilon)$ is a $K$-bialgebra and $S \colon A\to A$ is a linear map, called the {\em antipode}, making the following diagrams commute
\[\xymatrix{
& A\otimes A \ar@<0.5ex>[rr]^-{S \otimes Id} \ar@<-0.5ex>[rr]_-{Id \otimes S} & &  A\otimes A \ar[dr]^-{M}  \\
A \ar[rr]_-{\epsilon} \ar[ur]^-{\Delta} & &K \ar[rr]_-{\eta_A}& & A.
   }\]
In the Sweedler notation, the commutativity of these diagrams can be written as
\[a_1S(a_2)=\epsilon(a)1_A=S(a_1)a_2,\]
for any $a\in A$.

A Hopf $K$-algebra $(A,M,\eta,\Delta,\epsilon,S)$ is \emph{cocommutative} if its underlying $K$-coalgebra is cocommutative, meaning that the comultiplication map $\Delta$ satisfies $\sigma \cdot \Delta = \Delta$, where $\sigma\colon A \otimes A \longrightarrow A \otimes A$ is the twist map $\sigma(a \otimes b)=(b \otimes a)$, for any $a\otimes b\in A \otimes A$. In Sweedler notation, the cocommutativity is expressed as 
\begin{equation}\label{coco}
a_1\ot a_2=a_2\ot a_1,
\end{equation} for any $a\in A$.
Note that the antipode of a cocommutative Hopf $K$-algebra satisfies $S^2 =Id$.

A \emph{morphism of Hopf $K$-algebras} is a linear map that is both an algebra and a coalgebra morphism (the antipode is then automatically preserved). 
$\sf Hopf_{K,coc}$ denotes the category whose objects are cocommutative Hopf $K$-algebras and whose morphisms are morphisms of Hopf $K$-algebras. From now on, we will say Hopf algebras for Hopf $K$-algebras. 

The kernel of a morphism $f \colon A\to B$ in $\sf Hopf_{K, coc}$ is given by the inclusion $\HKer(f)  \to A$ of the following Hopf subalgebra of $A$
\[\HKer(f) = \{ a \in A \, \mid \, f(a_1) \otimes a_2 = 1_B \otimes a \} { = \{ a \in A \, \mid \, a_1 \otimes f(a_2) =  a \otimes  1_B\}.}\]

\subsection{Semi-abelian category}
We recall the definition of a semi-abelian category, a notion introduced in \cite{JMT}. The aim of the definition was to give axioms to capture the properties of the categories of groups, non-unital rings, Lie algebras, etc. Roughly speaking, the relation between the semi-abelian categories and the groups is similar as the one between the abelian categories and the abelian groups. The semi-abelian categories share a lot of properties among them: the fact that all traditional homological lemmas hold \cite{BB}, one can develop a theory of commutators \cite{Bourncomm} and one can define categorical notion as \textit{action}, \textit{semi-direct product}, \textit{crossed module}, etc \cite{Jan}. 

\begin{definition}
 A finitely complete category $\mathcal{C}$ (i.e. which possesses equalizers and all the finite products) is \emph{regular} if any arrow $f \colon A \rightarrow B$ factors as a regular epimorphism $p \colon A \rightarrow I$ followed by a monomorphism $i \colon I \rightarrow B$ and if, moreover, these factorizations are pullback-stable. 
\end{definition}

\begin{definition}  A pointed category is \emph{protomodular} in the sense of \cite{Bourn} if the \emph{Split Short Five Lemma} holds in $\mathcal{C}$. This lemma means that for any diagram
 \begin{center}
\begin{tikzpicture}[descr/.style={fill=white},baseline=(A.base),scale=0.8] 
\node (A) at (0,0) {$A_1$};
\node (B) at (2.5,0) {$B_1$};
\node (C) at (-2.5,0) {$X_1$};
\node (A') at (0,-2) {$A_2$};
\node (B') at (2.5,-2) {$B_2$};
\node (C') at (-2.5,-2) {$X_2$};
\path[->,font=\scriptsize]
(B.south) edge node[right] {$ g$}  (B'.north)
 (C.south) edge node[left] {$ v $}  (C'.north)
(A.south) edge node[left] {$ p$} (A'.north)
(C'.east) edge node[above] {$\kappa_2$} (A'.west)
([yshift=-4pt]A'.east) edge node[below] {$\alpha_2$} ([yshift=-4pt]B'.west)
([yshift=2pt]B'.west) edge node[above] {$e_2$} ([yshift=2pt]A'.east)
(C.east) edge node[above] {$\kappa_1$} (A.west)
([yshift=-4pt]A.east) edge node[below] {$\alpha_1$} ([yshift=-4pt]B.west)
([yshift=2pt]B.west) edge node[above] {$e_1$} ([yshift=2pt]A.east);
\end{tikzpicture} 
 \end{center}
 where $\kappa_i \colon X_i \to A_i$ is the kernel of $\alpha_i \colon A_i \to B_i$ and $\alpha_i \cdot e_i = Id_{B_i}$ for any $i \in \{ 1,2\}$, then $p$ is an isomorphism whenever $v$ and $g$ are.
\end{definition}

\begin{definition}
A category is \emph{exact} in the sense of Barr \cite{Ba} if it is regular and any equivalence relation is effective.
\end{definition}

\begin{definition}
A category $\mathcal{C}$ is \emph{semi-abelian} when $\mathcal{C}$ is pointed, exact, protomodular 
 and $\mathcal{C}$ admits binary coproducts.
\end{definition}
We recall that a category $\mathcal{C}$ is abelian whenever $\mathcal{C}$ and $\mathcal{C}^{op}$ are semi-abelian.

The category of groups $\mathsf{Grp}$, the category of Lie algebras $\mathsf{Lie_K}$, the category of crossed modules $\mathsf{Xmod}$ are examples of semi-abelian categories. Another example is given by the following theorem proved in \cite{GSV} (see also \cite{GKV} for a special case of
this result).
\begin{theorem}
When $K$ is a field, $\sf Hopf_{K,coc}$ is a semi-abelian category.
\end{theorem}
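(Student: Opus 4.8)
The plan is to verify directly the four axioms defining a semi-abelian category for $\C = \sf Hopf_{K,coc}$: pointed, finitely complete with binary coproducts, protomodular, and Barr-exact.

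\emph{Pointedness, finite completeness and coproducts.} The field $K$ with its canonical Hopf algebra structure is a zero object: for a cocommutative Hopf algebra $A$ the unit $\eta_A$ is the unique morphism $K \to A$ and the counit $\epsilon$ the unique morphism $A \to K$, since morphisms of Hopf algebras preserve units and counits. Binary products are carried by the tensor product of Hopf algebras $A \ox B$, with universal morphism $C \to A \ox B$, $c \mapsto f(c_1)\ox g(c_2)$ (multiplicativity of $\Delta_C$ makes this an algebra map); equalizers exist, computed as the largest Hopf subalgebra of $A$ contained in the linear equalizer. So $\C$ is finitely complete. Binary coproducts exist as well: the coproduct of $A$ and $B$ is a suitable quotient Hopf algebra of a free-product construction, generalising the free product of groups (resp. of Lie algebras) via group algebras (resp. enveloping algebras); alternatively $\C$ is locally presentable, hence cocomplete.

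\emph{Protomodularity.} To establish the Split Short Five Lemma I would first prove the key decomposition: for any split epimorphism $\alpha \colon A \to B$ with section $e$ and kernel $k \colon X \to A$, the linear map $X \ox B \to A$, $x \ox b \mapsto x\cdot e(b)$, is an isomorphism of coalgebras with inverse $a \mapsto a_1 S(e(\alpha(a_2)))\ox \alpha(a_3)$. Checking that $a_1 S(e(\alpha(a_2)))$ lies in $\HKer(\alpha)$ and that the two composites are the respective identities is a short computation in Sweedler notation using $\alpha\cdot e = Id$ and cocommutativity. Given this, applying it to the two split epimorphisms in the Split Short Five diagram and using the compatibility of $p$ with kernels and sections ($p\cdot k_1 = k_2\cdot v$ and $p\cdot e_1 = e_2\cdot g$), a direct calculation shows that under the coalgebra isomorphisms $A_i \cong X_i \ox B_i$ the morphism $p$ becomes $v\ox g$. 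If $v$ and $g$ are isomorphisms, then $p$ is a bijective morphism of Hopf algebras, hence an isomorphism, since the linear inverse is automatically a Hopf algebra morphism.

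\emph{Barr-exactness -- the main obstacle.} It remains to show $\C$ is regular with effective equivalence relations. Monomorphisms in $\C$ are the injective morphisms, and any $f \colon A \to B$ factors through its image, the Hopf subalgebra $f(A)\subseteq B$. The substantive points are: (i) every surjection is a regular epimorphism, i.e.\ a surjection $A \to B$ with kernel $X$ realises $B$ as the quotient of $A$ by the normal Hopf subalgebra generated by $X$; (ii) this (surjection, injection) factorization is pullback-stable; (iii) every internal equivalence relation $R \rightrightarrows A$ is the kernel pair of its coequalizer. I expect (i)--(iii) to be the genuine difficulty: they all rest on the good behaviour of quotients of cocommutative Hopf algebras, which ultimately relies on a \emph{faithful flatness} theorem -- a cocommutative analogue of Takeuchi's theorem, asserting that a cocommutative Hopf algebra over a field is faithfully flat as a module over any Hopf subalgebra. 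This is precisely where cocommutativity (and the field hypothesis) is essential -- surjections of general Hopf algebras are badly behaved -- and it is what separates the general statement from the algebraically closed characteristic-zero case, where instead one may invoke the Cartier--Gabriel--Kostant decomposition $A \cong U(P(A))\rtimes K[G(A)]$ to reduce to the semi-abelian categories of Lie algebras and of groups. Granting faithful flatness, (i)--(iii) follow by transporting the relevant diagram chases along faithfully flat descent; combining this with the previous axioms yields that $\sf Hopf_{K,coc}$ is semi-abelian.
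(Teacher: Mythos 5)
The paper does not actually prove this theorem: it is imported from \cite{GSV} (see also \cite{GKV} for a special case), so there is no in-text argument to compare against. That said, your outline tracks the proof in that reference quite closely: the zero object $K$, binary products given by the tensor product of cocommutative Hopf algebras, protomodularity via the decomposition $A\cong \HKer(\alpha)\rtimes B$ of a split epimorphism (this is exactly the Molnar--Radford isomorphism recalled as Lemma \ref{iso crossed product} in the paper), and exactness resting on Takeuchi-type faithful flatness of a cocommutative Hopf algebra over its Hopf subalgebras. Your protomodularity argument is complete and correct as stated, and the finite (co)completeness claims are sound.

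The gap is exactly where you locate it, in the Barr-exactness section, and the problem is that you do not close it: ``granting faithful flatness, (i)--(iii) follow by transporting the relevant diagram chases along faithfully flat descent'' is a statement of intent, not a proof. Effectiveness of equivalence relations is not a formal consequence of faithful flatness. What is actually needed -- and what \cite{GSV} establishes -- is the bijective correspondence between normal Hopf subalgebras $X$ of $A$ and quotient Hopf algebras of $A$, namely that $X\mapsto A/AX^+$ and $q\mapsto \HKer(q)$ are mutually inverse; faithful flatness enters precisely to prove $\HKer(A\to A/AX^+)=X$. One must then identify internal equivalence relations on $A$ with normal Hopf subalgebras (using that the category is already Mal'tsev, which follows from protomodularity) to conclude that every equivalence relation is a kernel pair. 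Two further facts you use in passing -- that monomorphisms in $\sf Hopf_{K,coc}$ are exactly the injective morphisms, and that the (surjection, injection) factorization is pullback-stable -- are themselves nontrivial for Hopf algebras and rely on the same flatness input; they should not be presented as obvious. So the skeleton is the right one, but as written the heart of the theorem remains unproved.
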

The following result of Takeuchi \cite{Takeuchi} follows from this theorem (see \cite{GSV} for a proof of this corollary):
\begin{corollary}\label{ab}
The category of commutative and cocommutative Hopf algebras $\sf Hopf_{K,coc,comm}$ is abelian.
\end{corollary}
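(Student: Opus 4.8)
The plan is to invoke the criterion just recalled: the category $\sf Hopf_{K,coc,comm}$ is abelian precisely when both it and its opposite are semi-abelian. The first half I would obtain by exhibiting $\sf Hopf_{K,coc,comm}$ as a \emph{Birkhoff subcategory} of the semi-abelian category $\sf Hopf_{K,coc}$, i.e. a full reflective subcategory closed under subobjects and regular quotients; such subcategories inherit all the semi-abelian axioms, since finite limits and regular-epimorphism/monomorphism factorisations in $\sf Hopf_{K,coc,comm}$ are computed as in $\sf Hopf_{K,coc}$ while binary coproducts are obtained by reflecting, so pointedness, regularity, exactness and protomodularity all transport along the inclusion. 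Closure under subobjects and regular quotients is immediate — a Hopf subalgebra, or a Hopf-algebra quotient, of a commutative cocommutative Hopf algebra is again commutative and cocommutative — and for reflectivity one checks that, for $A\in\sf Hopf_{K,coc}$, the two-sided ideal $I$ generated by the elements $ab-ba$ is a Hopf ideal: $\Delta(ab-ba)\in I\ox A+A\ox I$, $\epsilon(ab-ba)=0$ and $S(ab-ba)=-(S(a)S(b)-S(b)S(a))\in I$. Then $A/I$ is the universal commutative cocommutative quotient of $A$, and $A\mapsto A/I$ is the required reflector.

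For the second half I would use that $\sf Hopf_{K,coc,comm}$ is moreover \emph{additive}. For commutative cocommutative $A$ and $B$, the convolution product $(f*g)(a)=f(a_1)g(a_2)$ of two morphisms $f,g\col A\to B$ is again a morphism of Hopf algebras — commutativity of $B$ makes $f*g$ multiplicative, cocommutativity of $A$ makes it comultiplicative — it is associative with neutral element the zero morphism $\eta_B\cdot\epsilon_A$ and with inverses $S_B\cdot(-)$, it is commutative since $f(a_1)g(a_2)=g(a_2)f(a_1)=g(a_1)f(a_2)$ by commutativity of $B$ followed by cocommutativity of $A$, and composition is bilinear because $f$ and $g$ are coalgebra, respectively algebra, morphisms. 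Together with the zero object $K$ and with the biproduct provided by the tensor product, this makes $\sf Hopf_{K,coc,comm}$ an additive category; and a semi-abelian category that is additive is abelian — equivalently, its opposite is again semi-abelian — which closes the argument.

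The step I expect to be the main obstacle is precisely this implication ``semi-abelian $+$ additive $\Rightarrow$ abelian'', i.e. checking that in $\sf Hopf_{K,coc,comm}$ every monomorphism is a kernel and every epimorphism a cokernel, so that the defining axioms of a semi-abelian category hold in the opposite category as well. One approach is to argue inside an arbitrary additive semi-abelian category: factoring a monomorphism through the kernel of its cokernel, one uses additivity together with the fact that in a pointed protomodular regular category every regular epimorphism is normal to see that the comparison map is an isomorphism, and then dualises. An alternative is to identify $\sf Hopf_{K,coc,comm}$ with the category of internal abelian group objects in $\sf Hopf_{K,coc}$ — which, being additive, it equals — and to invoke the general principle that the abelian core of a semi-abelian category is abelian. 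A minor but necessary point along the way is to confirm that limits and colimits in $\sf Hopf_{K,coc,comm}$ are those of $\sf Hopf_{K,coc}$, respectively their reflections, which is what legitimises transporting the axioms through the Birkhoff inclusion.
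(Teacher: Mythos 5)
Your proof is correct, but note first that the paper itself contains no proof of this corollary: it is stated as a consequence of the semi-abelianness of $\sf Hopf_{K,coc}$ and the reader is referred to \cite{GSV}, where the argument is essentially the second alternative you mention at the end --- commutative cocommutative Hopf algebras are identified with the abelian objects of the semi-abelian category $\sf Hopf_{K,coc}$, and the full subcategory of abelian objects of a semi-abelian category is abelian. Your main route (Birkhoff subcategory $\Rightarrow$ semi-abelian; convolution on hom-sets and tensor biproduct $\Rightarrow$ additive; hence abelian) is therefore a genuinely different and more self-contained decomposition, and the verifications you supply are sound: the commutator ideal is indeed a Hopf ideal, the subcategory is closed under Hopf subalgebras and Hopf quotient maps, and $f*g$ is multiplicative because the codomain is commutative and comultiplicative because the domain is cocommutative. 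The one place where you overestimate the difficulty is the final implication: ``additive $+$ semi-abelian $\Rightarrow$ abelian'' needs no ad hoc argument about monomorphisms being kernels, since by Tierney's classical characterization a category is abelian if and only if it is additive and Barr-exact, and exactness is already among the semi-abelian axioms you transported along the Birkhoff inclusion; in fact, once additivity is in hand, only the exactness and finite (co)completeness parts of the first half of your argument are really used. The trade-off between the two routes is clear: yours costs the explicit construction of the reflector and of the convolution structure but is entirely elementary, whereas the cited route is shorter but rests on the (not entirely obvious) identification of the abelian objects of $\sf Hopf_{K,coc}$, of which your Eckmann--Hilton-flavoured convolution computation is the concrete shadow.
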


\subsection{Internal structures}

Let $\mathcal{C}$ be a category with pullbacks, in this category we recall how to define a reflexive graph, a reflexive-multiplicative graph and a groupoid (see \cite{CPP}).

 A  graph 
\begin{tikzpicture}[descr/.style={fill=white},yscale=1.2,baseline=(A.base)]
\node (A) at (0,0) {$A$};
\node (D) at (2,0) {$B$};
 \path[->,font=\scriptsize]
([yshift=-2pt]A.east) edge node[below] {$\delta$} ([yshift=-2pt]D.west)
([yshift=2pt]D.west) edge node[above] {$\iota$} ([yshift=2pt]A.east);
\end{tikzpicture}
 in $\mathcal{C}$ is called a split epimorphism if $\delta \cdot \iota = Id_B$. A morphism between two split epimorphisms 
\begin{tikzpicture}[descr/.style={fill=white},yscale=1.2,baseline=(A.base)]
\node (A) at (0,0) {$A'$};
\node (D) at (2,0) {$B'$};
 \path[->,font=\scriptsize]
([yshift=-2pt]A.east) edge node[below] {$\delta'$} ([yshift=-2pt]D.west)
([yshift=2pt]D.west) edge node[above] {$\iota'$} ([yshift=2pt]A.east);
\end{tikzpicture}
 and 
\begin{tikzpicture}[descr/.style={fill=white},yscale=1.2,baseline=(A.base)]
\node (A) at (0,0) {$A$};
\node (D) at (2,0) {$B$};
 \path[->,font=\scriptsize]
([yshift=-2pt]A.east) edge node[below] {$\delta$} ([yshift=-2pt]D.west)
([yshift=2pt]D.west) edge node[above] {$\iota$} ([yshift=2pt]A.east);
\end{tikzpicture}
is a pair of morphisms of $\mathcal{C}$ $(f \colon A' \to A, g \colon B' \to B)$ such that the two squares of the following diagram commute
\begin{center}
\begin{tikzpicture}[descr/.style={fill=white},scale=1.2]
\node (A) at (0,0) {$A$};
\node (B) at (0,1.5) {$A'$};
\node (C) at (3,1.5) {$B'$};
\node (D) at (3,0) {$B.$};
 \path[->,font=\scriptsize]
 (C.south) edge node[right] {$g$} (D.north)
  (B.south) edge node[right] {$f$} (A.north)
([yshift=-2pt]A.east) edge node[below] {$\delta$} ([yshift=-2pt]D.west)
([yshift=2pt]D.west) edge node[above] {$\iota$} ([yshift=2pt]A.east)
([yshift=-2pt]B.east) edge node[below] {$\delta'$} ([yshift=-2pt]C.west)
([yshift=2pt]C.west) edge node[above] {$\iota'$} ([yshift=2pt]B.east);
\end{tikzpicture}
\end{center}
They form the category of split epimorphisms (also called ``points") denoted by $\sf Pt(\mathcal{C})$.

\begin{definition}
A \emph{reflexive graph} $\mathbb A$
 in a category $\mathcal{C}$ is a diagram in $\mathcal{C}$ \begin{equation}\label{graph}
 \begin{tikzpicture}[descr/.style={fill=white},scale=1.2,baseline=(A.base)]
\node (A) at (0,0) {$A_1$};
\node (D) at (3,0) {$A_0$};
 \path[->,font=\scriptsize]
([yshift=7pt]A.east) edge node[above] {$\delta$} ([yshift=7pt]D.west)
(D.west) edge node[descr] {$\iota$} (A.east)
([yshift=-7pt]A.east) edge node[below] {$\gamma$} ([yshift=-7pt]D.west);
\end{tikzpicture},
\end{equation}
where $\delta$ is the ``domain morphism'', $\gamma$ the ``codomain morphism'', $\iota$ the ``identity morphism'' such that $\delta \cdot \iota = \gamma \cdot \iota = Id_{A_0}$.
 \end{definition}
 We follow the usual terminology of ``reflexive graph'' (as in \cite{CPP}, for instance). This notion is also called a ``reflexive pair'' in the literature.
 \begin{definition}\label{morph reflexive graph}
 \emph{A morphism of reflexive graphs} between 
 \begin{tikzpicture}[descr/.style={fill=white},yscale=1,xscale=0.8,baseline=(A.base)]
\node (A) at (0,0) {$A'_1$};
\node (D) at (3,0) {$A'_0$};
 \path[->,font=\scriptsize]
(D.west) edge node[descr] {$\iota'$} (A.east)
([yshift=7pt]A.east) edge node[above] {$\delta'$} ([yshift=7pt]D.west)
([yshift=-7pt]A.east) edge node[below] {$\gamma'$} ([yshift=-7pt]D.west);
\end{tikzpicture} and
 \begin{tikzpicture}[descr/.style={fill=white},yscale=1,xscale=0.8,baseline=(A.base)]
\node (A) at (0,0) {$A_1$};
\node (D) at (3,0) {$A_0$};
 \path[->,font=\scriptsize]
(D.west) edge node[descr] {$\iota$} (A.east)
([yshift=7pt]A.east) edge node[above] {$\delta$} ([yshift=7pt]D.west)
([yshift=-7pt]A.east) edge node[below] {$\gamma$} ([yshift=-7pt]D.west);
\end{tikzpicture} 
is given by a pair $(f_1 \colon A'_1 \to A_1 ,f_0 \colon A'_0 \to A_0) $ such that the three squares of the following diagram commute
\begin{center}
\begin{tikzpicture}[descr/.style={fill=white},scale=1.1]
\node (A) at (0,0) {$A_1$};
\node (B) at (0,1.5) {$A'_1$};
\node (C) at (3,1.5) {$A'_0$};
\node (D) at (3,0) {$A_0.$};
 \path[->,font=\scriptsize]
 (C.south) edge node[right] {$f_0$} (D.north)
  (B.south) edge node[right] {$f_1$} (A.north)
(D.west) edge node[descr] {$\iota$} (A.east)
(C.west) edge node[descr] {$\iota'$} (B.east)
([yshift=7pt]A.east) edge node[above] {$\delta$} ([yshift=7pt]D.west)
([yshift=-7pt]A.east) edge node[below] {$\gamma$} ([yshift=-7pt]D.west)
([yshift=7pt]B.east) edge node[above] {$\delta'$} ([yshift=7pt]C.west)
([yshift=-7pt]B.east) edge node[below] {$\gamma'$} ([yshift=-7pt]C.west);
\end{tikzpicture}
\end{center}
 \end{definition}
 \begin{definition}
In a category $\mathcal{C}$ with pullbacks, a reflexive graph 
 \begin{tikzpicture}[descr/.style={fill=white},yscale=1.2,baseline=(A.base)]
\node (A) at (0,0) {$A_1$};
\node (D) at (3,0) {$A_0$};
 \path[->,font=\scriptsize]
([yshift=7pt]A.east) edge node[above] {$\delta$} ([yshift=7pt]D.west)
(D.west) edge node[descr] {$\iota$} (A.east)
([yshift=-7pt]A.east) edge node[below] {$\gamma$} ([yshift=-7pt]D.west);
\end{tikzpicture} 
together with a morphism $m \colon A_1 \times_{A_0} A_1 \to A_1$ is called a \emph{reflexive-multiplicative graph} 
\begin{equation}\label{mgraph}
 \begin{tikzpicture}[descr/.style={fill=white},yscale=1.2,baseline=(A.base)]
\node (A) at (0,0) {$A_1$};
\node (D) at (3,0) {$A_0$};
\node (X) at (-3,0) {$A_1\times_{A_0}A_1$};
 \path[->,font=\scriptsize]
 (X.east) edge node[above] {$m$} (A.west)
([yshift=7pt]A.east) edge node[above] {$\delta$} ([yshift=7pt]D.west)
(D.west) edge node[descr] {$\iota$} (A.east)
([yshift=-7pt]A.east) edge node[below] {$\gamma$} ([yshift=-7pt]D.west);
\end{tikzpicture} ,
\end{equation}
where  $A_1\times_{A_0}A_1$ is the (object part of the) following pullback
\[
\xymatrix{A_1\times_{A_0} A_1 \ar[d]_{p_1} \ar[r]^-{p_2} & A_1 \ar[d]^{ \gamma} \\
A_1 \ar[r]_-{\delta}& A_0,
}
\] and $m$ is a multiplication that is required to satisfy the identities
\begin{equation}\label{RGM}
m \cdot (Id_{A_1} , \iota \cdot \delta ) = Id_{A_1} = m \cdot (\iota \cdot \gamma, Id_{A_1} ),
\end{equation} 
 where $(Id_{A_1} , \iota \cdot \delta ) \colon A_1 \to A_1\times_{A_0}A_1 $ and $(\iota \cdot \gamma, Id_{A_1} ) \colon A_1 \to A_1\times_{A_0}A_1 $ are induced by the universal property of the pullback $A_1\times_{A_0}A_1$.
 \end{definition}
 
  \begin{definition}\label{grpd}
An \emph{internal groupoid} is a reflexive-multiplicative graph such that the multiplication $m$ satisfies the additional identities
 \begin{equation*}
 \delta \cdot m = \delta \cdot { p_2},
 \end{equation*}
 \begin{equation*}
 \gamma \cdot m = \gamma \cdot { p_1},
 \end{equation*}
 \begin{equation*}
 m \cdot (1_{A_1},m) = m \cdot (m,1_{A_1}),
 \end{equation*}
 and there exists a morphism $i \colon A_1 \to A_1$  (playing the role of ``inverse") such that \begin{equation*}
 \delta  \cdot i = \gamma,
 \end{equation*}
 \begin{equation*}
 \gamma \cdot i = \delta,
 \end{equation*}
 \begin{equation*}
 m \cdot (i , Id_{A_1}) = \iota \cdot {\delta},
 \end{equation*}
 \begin{equation*}
 m \cdot (Id_{A_1}, i) = \iota \cdot {\gamma}.
 \end{equation*}
\end{definition}
Let $\C$ be a category with pullbacks, we denote by $\mathsf{RMG}({\C})$ the category of reflexive-multiplicative graphs with morphisms of reflexive-multiplicative graphs, which are morphisms of reflexive graphs preserving the multiplication. The category of internal groupoids with the morphisms of reflexive multiplicative graphs will be denoted by $\mathsf{Grpd}(\C)$.
It is well-known \cite{CPP} that in any pointed Mal’tsev category having the\textit{ Smith is Huq} property \cite{BG}, for a reflexive graph \eqref{graph}, the following properties are equivalent
  \begin{itemize}
  \item the reflexive graph \eqref{graph} is a reflexive-multiplicative graph
  \item the reflexive graph \eqref{graph} is an internal groupoid
  \item the reflexive graph \eqref{graph} satisfies the condition $[Ker(\delta),Ker(\gamma)] = 0$,  which means that $Ker(\delta)$ and $Ker(\gamma)$ commute. One says that two subobjects $x \colon X\rightarrow A $ and $y \colon Y \rightarrow A$ of the same object $A$ commute (in the sense of Huq) \cite{Huq} if and only if there exists an arrow $p$ making the following diagram commute:

\begin{equation*}
\begin{tikzpicture}[descr/.style={fill=white},scale=0.9]
\node (A) at (0,0) {$A.$};
\node (C) at (2,2) {$Y$};
\node (D) at (0,2) {$X \times Y$};
\node (B) at (-2,2) {$X$};
  \path[-stealth]
 (B.south) edge node[left] {${x\,} $} (A.north west) 
 (C.south) edge node[right] {${\, y} $} (A.north east) 
 (B.east) edge node[above] {$(1,0)$} (D.west) 
 (C.west) edge node[above] {$(0,1)$} (D.east);
   \path[-stealth,dashed]
 (D.south) edge node[right] {$p$} (A.north);
\end{tikzpicture}
\end{equation*}

\end{itemize}

This theory can be applied to the category $\sf Hopf_{K,coc}$. Thanks to a characterization of the commutation (in the sense of Huq) of two cocommutative Hopf subalgebras in \cite{GSV}, a reflexive graph satisfying the condition $[HKer(\delta),HKer(\gamma)]=0$ is a cat$^1$-Hopf algebra, in the sense of \cite{Vilaboa}, i.e.\ a reflexive graph \eqref{graph} such that 
  \[ ab=ba \] 
  for any $a \in HKer(\delta)$ and any $ b \in Hker(\gamma)$.
Since $\sf Hopf_{K,coc}$ is a Mal'tsev category which possesses the \emph{Smith is Huq} property,  these internal structures yield to three  full subcategories of the category of reflexive graphs and their morphisms (see \cite{CPP}): 
   \begin{itemize}
   \item The cat$^1$-Hopf algebras with morphisms of reflexive graphs form a category, denoted by $\sf{cat^1}(\Hopf_{K, coc})$;
   \item The reflexive-multiplicative graphs, with morphisms of reflexive graphs, form a category, that will be denoted by $\sf{RMG}(\Hopf_{K, coc})$;
   \item The category of internal groupoids where morphisms are the reflexive graph morphisms, will be denoted by $\sf{Grpd}(\Hopf_{K, coc})$. 
   \end{itemize}
   
   In conclusion, as observed in Proposition 5.4 of \cite{GSV}, we have the following proposition by applying the result of \cite{CPP} to the category of cocommutative Hopf algebras.
  \begin{proposition}\label{Iso groupoid RGM}
The following categories are all isomorphic
\begin{itemize}
\item $\sf{cat^1}(\Hopf_{K, coc})$,
\item $\sf{RMG}(\Hopf_{K, coc})$,
\item $\sf{Grpd}(\Hopf_{K, coc})$.
\end{itemize}
\end{proposition}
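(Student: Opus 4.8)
The plan is to deduce the statement from the general theory of internal groupoids in pointed Mal'tsev categories satisfying the \emph{Smith is Huq} condition, as recalled just above from \cite{CPP}, together with the description of Huq-commutation of cocommutative Hopf subalgebras obtained in \cite{GSV}. Since $\sf Hopf_{K,coc}$ is semi-abelian it is in particular a pointed Mal'tsev category, and it satisfies \emph{Smith is Huq}; hence, by \cite{CPP}, a reflexive graph \eqref{graph} in $\sf Hopf_{K,coc}$ underlies a reflexive-multiplicative graph if and only if it underlies an internal groupoid, if and only if $[\mathrm{Ker}(\delta),\mathrm{Ker}(\gamma)]=0$. Moreover, when these conditions hold the multiplication $m$ (and the inverse $i$) is uniquely determined by the underlying reflexive graph, and any morphism of reflexive graphs between two such objects automatically commutes with $m$ and $i$. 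Consequently the forgetful functors from $\sf{RMG}(\Hopf_{K, coc})$ and from $\sf{Grpd}(\Hopf_{K, coc})$ to the category of reflexive graphs are full, faithful and injective on objects, with one and the same essential image; so these two categories coincide, as full subcategories of reflexive graphs, with the subcategory of those reflexive graphs satisfying $[\mathrm{Ker}(\delta),\mathrm{Ker}(\gamma)]=0$.

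It then remains to identify $\sf{cat^1}(\Hopf_{K, coc})$ with the same subcategory. Here I would use the explicit formula for kernels in $\sf Hopf_{K,coc}$ recalled in Section~\ref{preli}: the kernels of $\delta$ and $\gamma$ are the Hopf subalgebras $\HKer(\delta)$ and $\HKer(\gamma)$ of $A_1$. By the characterization of the Huq-commutator of two cocommutative Hopf subalgebras given in \cite{GSV}, one has $[\HKer(\delta),\HKer(\gamma)]=0$ exactly when $ab=ba$ for every $a\in\HKer(\delta)$ and every $b\in\HKer(\gamma)$, which is precisely the cat$^1$-condition of \cite{Vilaboa}. Since in all three cases the morphisms are just the morphisms of the underlying reflexive graphs, the three categories are full subcategories of reflexive graphs with the same objects, hence equal, and in particular isomorphic; this recovers Proposition~5.4 of \cite{GSV}.

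The categorical ingredients of the first step — the Mal'tsev property, \emph{Smith is Huq}, the uniqueness of the multiplicative structure, and the automatic preservation of $m$ and $i$ by reflexive graph morphisms — are supplied essentially verbatim by the cited results, so nothing new is required there. The one genuinely Hopf-algebraic point, and the step I expect to carry the real content, is the equivalence between the abstract commutation $[\HKer(\delta),\HKer(\gamma)]=0$ and the pointwise identity $ab=ba$: this amounts to exhibiting the cooperator of the cospan $\HKer(\delta)\to A_1\leftarrow\HKer(\gamma)$ by means of the multiplication of $A_1$, and checking — using the explicit description of $\HKer$ and cocommutativity — that such a cooperator is well defined precisely when the two Hopf subalgebras commute elementwise. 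Granting this, the proposition follows formally.
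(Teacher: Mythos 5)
Your proposal is correct and follows essentially the same route as the paper, which simply invokes the result of \cite{CPP} for pointed Mal'tsev categories with the \emph{Smith is Huq} property together with the characterization of Huq-commutation of cocommutative Hopf subalgebras from \cite{GSV} (Proposition 5.4 there). The only difference is that you spell out the uniqueness of the multiplicative structure and the identification of the commutator condition with elementwise commutation in more detail than the paper, which states these points as already established in the preceding discussion.
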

  \subsection{Hopf crossed modules and internal groupoids of Hopf algebras}
  In this subsection we recall how internal structures in $\sf Hopf_{K,coc}$ are related to crossed structures. 
  Recall that for a cocommutative Hopf algebra $B$, the category of left $B$-modules is symmetric monoidal and the forgetful functor to vector spaces is symmetric monoidal. Hence, one can consider a Hopf algebra in the category of left $B$-modules, which is then called a left $B$-module Hopf algebra. Explicitly, a (cocommutative) $B$-module Hopf algebra $X$ is a (cocommutative) Hopf algebra $X$ endowed with a linear map 
$ \xi \colon B \otimes X \to X,\ \xi (b \otimes x)  = b \triangleright x$, satisfying the following identities

\begin{equation}\label{m et act}
{(bb'} \triangleright x) = {b}\triangleright({b'}\triangleright x) ,
\end{equation}
\begin{equation}\label{1 et act}
 {1_B} \triangleright x=x ,
 \end{equation}
 \begin{equation}\label{act et m}
 (b  \triangleright xy) = ({b_1} \triangleright x) ({b_2}  \triangleright y ),
 \end{equation}
\begin{equation}\label{act et 1}
 b  \triangleright 1_X = \epsilon(b)1_X ,
 \end{equation}
\begin{equation}\label{act et delta}
(b \triangleright x)_1 \otimes (b \triangleright x)_2 = {b_1} \triangleright x_1 \otimes {b_2} \triangleright x_2,
 \end{equation}
\begin{equation}\label{act et epsilon}
 \epsilon(b  \triangleright x) = \epsilon(b)\epsilon(x),
\end{equation}
for any $b, b' \in B$, and $x,y \in X$.
 
 Let $X$ and $Y$ be $B$-module Hopf algebras, a Hopf algebra morphism $\lambda \colon X \to Y$ is called a morphism of $B$-module Hopf algebras if it is a $B$-linear map, i.e.\ $\lambda ( b \triangleright x) = b \triangleright \lambda(x)$, $\forall x \in X,$ and $\forall b \in B$. From these definitions we can describe the category $\sf Act(\mathsf{Hopf}_{K,coc})$ where the objects are the pairs $(B,X)$ where $B$ is a  cocommutative Hopf algebra and $X$ is a cocommutative $B$-module Hopf algebra, and the morphisms are the pairs $(f,g) \colon (B,X) \to (B',X')$ of morphisms of Hopf algebras $f \colon B \to B'$, $g \colon X \to X'$ such that \[g(b \act x) =  f(b) \act g(x).\]
 
 From an object in $\sf Act(Hopf_{K,coc})$, we can obtain one important construction. More precisely, from a cocommutative Hopf algebra $B$ and a cocommutative $B$-module Hopf algebra $X$, we can construct $X \rtimes B$, called {\emph semi-direct product} (or \textit{smash product})  \cite{Molnar}, the vector space $X \ox B$ endowed with the following Hopf algebra structure
 
\begin{align*}
& M_{X \rtimes B}((x\ox b)\ox (y \ox c))= x ({b_1} \act y) \ox b_2 c \\
& \Delta_{X \rtimes B}(x \ox b) = x_1 \ox b_1 \ox x_2 \ox b_2 \\
& 1_{X \rtimes B} = 1_X \ox 1_B\\
& \epsilon_{X \rtimes B} (x \ox b) = \epsilon(x)\epsilon(b)\\
&S_{X \rtimes B}(x \otimes b) = {S_B(b_1)} \act S_X(x) \otimes S_B(b_2)
\end{align*} 

This structure is a key element in the following result \cite{Molnar}, which gives a relation between module Hopf algebras and split epimorphisms. This result was essentially extended by Radford in \cite{Radford} by describing split epimorphisms in the category of Hopf algebras over a field in term of the so-called Radford's biproduct (later called ``bosonization'' by Majid \cite{MajidB}.)
 \begin{lemma}\label{iso crossed product}
 Let
\begin{tikzpicture}[descr/.style={fill=white},yscale=1.2,baseline=(A.base)]
\node (A) at (0,0) {$A$};
\node (D) at (2,0) {$B$};
 \path[->,font=\scriptsize]
([yshift=-2pt]A.east) edge node[below] {$\delta$} ([yshift=-2pt]D.west)
([yshift=2pt]D.west) edge node[above] {$\iota$} ([yshift=2pt]A.east);
\end{tikzpicture}
 be a split epimorphism in $\mathsf{Hopf_{K,coc}} $. Then, $\HKer(\delta)$ is a $B$-module Hopf algebra for the linear map $\xi \colon B \otimes \HKer(\delta) \to \HKer(\delta)$ defined by
\[\xi (b \otimes k) = \iota(b_1)k\iota(S(b_2)),\]  $\forall b \in B, \forall k \in X$. There exists a natural isomorphism \[\HKer(\delta) \rtimes B \cong A,\] which yields an equivalence of categories between the category  $\mathsf{Pt(Hopf_{K,coc})}$ of split epimorphisms in $\Hopf_{K,coc}$
 and the category $\mathsf{Act(Hopf_{K,coc})}$ of module Hopf algebras.
 \end{lemma}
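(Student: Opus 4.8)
The plan is to prove both halves of the statement by explicit formulas: first exhibit the conjugation action and check it makes $\HKer(\delta)$ a $B$-module Hopf algebra; then construct a mutually inverse pair of morphisms realising $\HKer(\delta)\rtimes B\cong A$; and finally package these into a pair of functors between $\mathsf{Pt}(\Hopf_{K,coc})$ and $\mathsf{Act}(\Hopf_{K,coc})$ together with the required natural isomorphisms. Cocommutativity is used throughout: it lets us permute Sweedler factors freely and guarantees that $S$ is a morphism of coalgebras, so that comultiplications of the expressions below split without a twist.

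\emph{The module Hopf algebra structure.} Fix a split epimorphism $\delta\col A\to B$ with $\delta\cdot\iota=Id_B$. For $b\in B$ and $k\in\HKer(\delta)$ put $c=\iota(b_1)k\iota(S(b_2))$. Using that $\iota$ and $\delta$ are morphisms of Hopf algebras, that $S$ is a coalgebra morphism, cocommutativity to reorder Sweedler indices, and the antipode identities in $B$ (which give, for every $x\in A$, $x_1\ox\delta(x_2)S(\delta(x_3))=x\ox 1_B$), one checks that $c\in\HKer(\delta)$, so that $\xi\col B\ox\HKer(\delta)\to\HKer(\delta)$, $\xi(b\ox k)=\iota(b_1)k\iota(S(b_2))$, is well defined. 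Axioms \eqref{m et act}–\eqref{act et epsilon} are then verified one by one: \eqref{m et act} and \eqref{1 et act} follow from associativity of the product of $A$, comultiplicativity of $\iota$, and $\iota(1_B)=1_A$; the Leibniz rule \eqref{act et m} and the comultiplication compatibility \eqref{act et delta} use cocommutativity together with $\iota$ preserving $\Delta$; \eqref{act et 1} follows from $\iota(b_1)\iota(S(b_2))=\epsilon(b)1_A$; and \eqref{act et epsilon} is immediate from $\epsilon\cdot\iota=\epsilon$. With the obvious action on morphisms this defines a functor $G\col\mathsf{Pt}(\Hopf_{K,coc})\to\mathsf{Act}(\Hopf_{K,coc})$, $(\delta,\iota)\mapsto(B,\HKer(\delta))$.

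\emph{The isomorphism $\HKer(\delta)\rtimes B\cong A$.} Define $\phi\col\HKer(\delta)\rtimes B\to A$ by $\phi(k\ox b)=k\,\iota(b)$ (product in $A$). It is a coalgebra morphism because the comultiplication of the smash product is the tensor comultiplication and $\iota$ is a coalgebra morphism, and it is an algebra morphism because the required multiplicativity reduces, after expanding $M_{X\rtimes B}$, to the antipode relations $\iota(b_1)\iota(S(b_2))=\epsilon(b)1_A=\iota(S(b_1))\iota(b_2)$ in $A$; hence $\phi$ is a morphism in $\Hopf_{K,coc}$. For the inverse, set
\[ \psi\col A\to\HKer(\delta)\rtimes B,\qquad \psi(a)=a_1\,\iota(S(\delta(a_2)))\ox\delta(a_3), \]
whose first tensor factor lies in $\HKer(\delta)$ by the computation of the previous paragraph. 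One checks $\phi\cdot\psi=Id_A$ using $S(\delta(a_2))\delta(a_3)=\epsilon(a_2)1_B$, and $\psi\cdot\phi=Id$ using $\delta(k_1)\ox k_2=1_B\ox k$ for $k\in\HKer(\delta)$ together with $b_1S(b_2)=\epsilon(b)1_B$. Since $\phi$ is a Hopf algebra morphism with linear inverse $\psi$, it is an isomorphism in $\Hopf_{K,coc}$; moreover, writing $p_B\col\HKer(\delta)\rtimes B\to B$, $p_B(k\ox b)=\epsilon(k)b$, and $i_B\col B\to\HKer(\delta)\rtimes B$, $i_B(b)=1\ox b$, one has $\delta\cdot\phi=p_B$ and $\phi\cdot i_B=\iota$, so $\phi$ is an isomorphism of split epimorphisms, and it is natural in $(\delta,\iota)$.

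\emph{The equivalence.} Define $F\col\mathsf{Act}(\Hopf_{K,coc})\to\mathsf{Pt}(\Hopf_{K,coc})$ by sending $(B,X)$ to the split epimorphism $(p_B\col X\rtimes B\to B,\ i_B)$, which is functorial. The previous paragraph gives a natural isomorphism $FG\cong Id_{\mathsf{Pt}}$. For $GF\cong Id_{\mathsf{Act}}$ one identifies $\HKer(p_B)$ with $X$: the map $x\mapsto x\ox 1_B$ is an isomorphism of Hopf algebras onto $\HKer(p_B)$, and a short computation with the smash multiplication and axioms \eqref{act et 1}, \eqref{act et epsilon} shows that the conjugation action of $B$ on $\HKer(p_B)$ transported along this isomorphism is exactly the original action $\act$; naturality of both isomorphisms is a routine diagram chase. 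The only real obstacle is the bookkeeping: every verification above is an elementary manipulation in Sweedler notation, but there are many of them, and at several points (membership $c\in\HKer(\delta)$, the identities \eqref{act et m} and \eqref{act et delta}, the check $\psi\cdot\phi=Id$, and the recovery of $\act$ from conjugation) one must be careful to invoke cocommutativity to reorder factors legitimately. No step presents a conceptual difficulty.
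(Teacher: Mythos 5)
Your proposal is correct and follows essentially the same route as the paper: the paper's proof exhibits the same isomorphism $\Phi(k\ox b)=k\,\iota(b)$ and the same pair of functors $H$ (conjugation action on $\HKer(\delta)$) and $I$ (smash product with the projection and unit splitting), deferring the Sweedler-notation verifications to the literature (Molnar). Your write-up simply supplies the details the paper omits, including the explicit inverse $\psi(a)=a_1\iota(S(\delta(a_2)))\ox\delta(a_3)$, which is the standard computation.
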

 \begin{proof} 
The isomorphism between  $\HKer(\delta) \rtimes B $ and $A$ is given by \[ \Phi \colon \HKer(\delta) \rtimes B \to A \colon k \ox b \mapsto k \iota(b).\]
 We recall the functors inducing the equivalence of categories.
 The functor 

\[ H \colon  \mathsf{Pt(Hopf_{K,coc})} \to \mathsf{Act(Hopf_{K,coc})}\colon 
\begin{tikzpicture}[descr/.style={fill=white},yscale=1.2,baseline=(A.base)]
\node (A) at (0,0) {$A$};
\node (D) at (2,0) {$B$};
 \path[->,font=\scriptsize]
([yshift=-2pt]A.east) edge node[below] {$\delta$} ([yshift=-2pt]D.west)
([yshift=2pt]D.west) edge node[above] {$\iota$} ([yshift=2pt]A.east);
\end{tikzpicture}
 \mapsto (\xi \colon B \otimes \HKer(\delta) \to \HKer(\delta))\]
which sends a split epimorphism of cocommutative Hopf algebras to a $B$-module Hopf algebra,  and the functor $I$ below, which sends a $B$-module Hopf algebra to a split epimorphism of cocommutative Hopf algebras  \[
  I \colon  \mathsf{Act(Hopf_{K,coc})} \to \mathsf{Pt(Hopf_{K,coc})} \colon (\rho \colon B \ox X \to X) \mapsto 
\begin{tikzpicture}[descr/.style={fill=white},yscale=1.2,baseline=(A.base)]
\node (A) at (0,0) {$X \rtimes B$};
\node (D) at (2.5,0) {$B$};
 \path[->,font=\scriptsize]
([yshift=-2pt]A.east) edge node[below] {$p_2$} ([yshift=-2pt]D.west)
([yshift=2pt]D.west) edge node[above] {$e$} ([yshift=2pt]A.east);
\end{tikzpicture},
\] 
where $e$ is defined by $e(b) = 1_X \ox b$ and $p_2$ is the projection, $p_2(x\ox b) = \epsilon(x) b$.
\end{proof}
It is well-known that there is an equivalence as in Lemma \ref{iso crossed product}, in any semi-abelian category. The interested readers can find some variations of this result for monoids in \cite{BMS}, for monoids in symmetric monoidal category and for magmas in \cite{GJS}. This last paper motivated the case of non-cocommutative (non-associative) Hopf algebras, which is studied in \cite{Sterck}. We refer also to \cite{Bohm} for a different approach.

 Definitions of Hopf crossed modules were given independently in \cite{Majid} and \cite{Vilaboa}. These definitions are not equivalent in the general case. However, if we consider cocommutative Hopf algebras they coincide. We spell out the definiton in the cocommutative case.
 \begin{definition}
In $\mathsf{Hopf_{K,coc}}$, a \emph{Hopf crossed module} is a triple $(B, X, d)$ where $B$ is a cocommutative Hopf algebra, $X$ is a cocommutative $B$-module Hopf algebra and $d \colon X\to B$ is a Hopf algebra morphism satisfying
\[(CM1) \; d(b  \triangleright x) = b_1d(x)S(b_2),\]
\[(CM2) \; {d(y)} \triangleright x = y_1xS(y_2),\]
for any $x, y \in X$ and $b \in B$.
\end{definition}
Let $\mathsf{HXMod_{K, coc}}$ be the category of Hopf crossed modules, where a morphism \[(\alpha, \beta) \colon (B,X,d) \to (B',X',d')\] is a pair of Hopf algebra morphisms $\alpha \colon X \to X'$ and $\beta \colon B \to B'$ such that $d' \cdot \alpha = \beta \cdot d$ and $\alpha ({b}  \triangleright x ) = \, {\beta(b)}  \triangleright \alpha (x)$.
\begin{proposition}\label{equi xmod 1}
There is an equivalence of categories between $\mathsf{HXMod_{K, coc}}$ and $\sf{Grpd}(\Hopf_{K, coc})$.
\end{proposition}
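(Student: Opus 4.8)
The plan is to exhibit the equivalence concretely, by constructing a pair of functors between $\mathsf{HXMod_{K, coc}}$ and the category $\sf{cat^1}(\Hopf_{K, coc})$, which by Proposition \ref{Iso groupoid RGM} is isomorphic to $\sf{Grpd}(\Hopf_{K, coc})$; working with cat$^1$-Hopf algebras is convenient since it avoids the pullback $A_1 \times_{A_0} A_1$ altogether. (Abstractly, the statement is also an instance of Janelidze's equivalence between internal crossed modules and internal groupoids in the semi-abelian category $\Hopf_{K, coc}$, combined with the identification of internal crossed modules with Hopf crossed modules carried out in \cite{GSV}; but the explicit functors below are what we shall need in the $2$-dimensional setting.)

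First I would define a functor $F \colon \mathsf{HXMod_{K, coc}} \to \sf{cat^1}(\Hopf_{K, coc})$. Given a Hopf crossed module $(B, X, d)$, form the semi-direct product $X \rtimes B$ of Lemma \ref{iso crossed product} and consider the reflexive graph with $\delta = p_2$ the projection, $\iota = e$ the canonical section $b \mapsto 1_X \ox b$, and $\gamma \colon X \rtimes B \to B$ defined by $\gamma(x \ox b) = d(x) b$. A short computation using that $d$ is a Hopf algebra morphism shows that $\gamma$ is well defined (here one needs $(CM1)$ to see it is multiplicative) and that $\delta \cdot \iota = \gamma \cdot \iota = Id_B$. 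The heart of this direction is the cat$^1$ identity: one identifies $\HKer(\delta)$ with $X$ via $x \mapsto x \ox 1_B$ and $\HKer(\gamma)$ with $X$ via the ``graph of $d$'' map $x \mapsto x_1 \ox S(d(x_2))$, and then checks that $ab = ba$ for $a \in \HKer(\delta)$, $b \in \HKer(\gamma)$ translates, after unwinding the multiplication of $X \rtimes B$ and using cocommutativity, precisely into axiom $(CM2)$ (with $(CM1)$ again handling the antipode terms). On morphisms $F$ sends $(\alpha, \beta)$ to $(\alpha \rtimes \beta, \beta)$.

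In the other direction I would define $G \colon \sf{cat^1}(\Hopf_{K, coc}) \to \mathsf{HXMod_{K, coc}}$ by normalisation: to a cat$^1$-Hopf algebra $(A_1 \rightrightarrows A_0)$ it assigns the triple $(A_0, \HKer(\delta), d)$ with $d = \gamma \cdot \ker(\delta)$ and $A_0$ acting on $\HKer(\delta)$ by the conjugation action $a \act k = \iota(a_1) k \iota(S(a_2))$ of Lemma \ref{iso crossed product} (which lands in $\HKer(\delta)$ because $\delta$ is a Hopf morphism with $\delta \cdot \iota = Id$). Axiom $(CM1)$ is then immediate from $\gamma \cdot \iota = Id_{A_0}$ and multiplicativity of $\gamma$. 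The delicate point — and the step I expect to be the main obstacle — is $(CM2)$, i.e.\ ${d(l)} \act k = l_1 k S(l_2)$ for $k, l \in \HKer(\delta)$: as in the group case, the idea is that for $l \in \HKer(\delta)$ a suitable ``correction'' of $l$ by $\iota \gamma$ and the antipode lies in $\HKer(\gamma)$, so that feeding it into the commutation hypothesis $[\HKer(\delta), \HKer(\gamma)] = 0$ and carefully tracking the Sweedler indices yields $(CM2)$. This is exactly the kind of technical Hopf-algebraic bookkeeping that the paper relegates to an appendix. On morphisms $G$ acts by restriction to the Hopf kernels.

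Finally I would check that $F$ and $G$ are mutually quasi-inverse. For $G F \cong Id$: starting from $(B, X, d)$, the normalisation of $F(B,X,d)$ has object of arrows $\HKer(p_2) \cong X$ via $x \mapsto x \ox 1_B$, under which $\gamma$ restricts to $d$ and the conjugation action by $\iota(B) = 1_X \ox B$ computes back (a direct calculation with the smash product and $(4)$--$(8)$ of the module Hopf algebra axioms) to the original $B$-action on $X$; these isomorphisms are visibly natural in $(B,X,d)$. For $F G \cong Id$: given a cat$^1$-Hopf algebra, Lemma \ref{iso crossed product} provides the isomorphism $\Phi \colon \HKer(\delta) \rtimes A_0 \to A_1$, $k \ox a \mapsto k \iota(a)$; compatibility of $\Phi$ with $\delta$ and $\iota$ is part of Lemma \ref{iso crossed product}, and compatibility with $\gamma$ reduces to $\gamma(k \iota(a)) = \gamma(k)\, a$, once more from multiplicativity of $\gamma$ and $\gamma \cdot \iota = Id$, so $\Phi$ is an isomorphism of cat$^1$-Hopf algebras, naturally in the input. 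Combining these natural isomorphisms with Proposition \ref{Iso groupoid RGM} gives the asserted equivalence $\mathsf{HXMod_{K, coc}} \simeq \sf{Grpd}(\Hopf_{K, coc})$.
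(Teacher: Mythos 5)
Your proposal is correct and follows essentially the same route as the paper: both directions use the same functors (normalisation $d=\gamma\cdot hker(\delta)$ with the conjugation action, and the semi-direct product $X\rtimes B$ with $p_2$, $e$, $p_1(x\ox b)=d(x)b$), and both reduce to a more tractable description of internal groupoids via Proposition \ref{Iso groupoid RGM} — you pass through $\sf{cat^1}(\Hopf_{K, coc})$ and check the commutator condition $[\HKer(\delta),\HKer(\gamma)]=0$, while the paper passes through $\mathsf{RMG}(\Hopf_{K, coc})$ and exhibits the multiplication $m(x\ox b, x'\ox b')=xx'\ox\epsilon(b)b'$, which are isomorphic viewpoints. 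The technical verifications you defer (notably $(CM2)$ from the commutation hypothesis) are exactly those the paper also omits, referring to \cite{GSV}.
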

\begin{proof}
The theorem was proved in \cite{GSV} (see also \cite{Bohm,Vilaboa}), we recall the construction of the functors since it will be frequently used in what follows. Thanks to Proposition \ref{Iso groupoid RGM}, it is enough to prove that $\mathsf{RMG}(\Hopf_{K, coc})$ and $\mathsf{HXMod_{K, coc}}$ are equivalent. 
With a reflexive multiplicative graph \eqref{graph} we associate the Hopf algebra morphism \[ d = \gamma \cdot {hker} (\delta) \colon \HKer (\delta) \to A_0\]
(where ${hker} (\delta) \colon \HKer(\delta) \to A_1$ is the kernel of $\delta$ in $\mathsf{Hopf_{K,coc}}$), equipped with $\HKer(\delta)$ the $A_0$-module, $A_0 \otimes \HKer(\delta) \to \HKer(\delta)$ defined by 
\[a \triangleright  k = \iota(a_1) k \iota(S(a_2)), \]
for any $a \in A_0$ and $k \in \HKer(\delta)$. 
Conversely, given a Hopf crossed module $(B, X, d)$, we define the reflexive graph
\begin{equation}\label{reflexive}
  \begin{tikzpicture}[descr/.style={fill=white},scale=1.2,baseline=(A.base)]
\node (A) at (0,0) {$X \rtimes B$};
\node (D) at (3,0) {$B$};
 \path[->,font=\scriptsize]
([yshift=7pt]A.east) edge node[above] {$p_2$} ([yshift=7pt]D.west)
(D.west) edge node[descr] {$e$} (A.east)
([yshift=-7pt]A.east) edge node[below] {$p_1$} ([yshift=-7pt]D.west);
\end{tikzpicture} ,
\end{equation}
where $p_2$ is the second projection $p_2 (x \otimes b) = \epsilon (x) b$, the morphisms $p_1$ and $e$ are defined by $p_1(x \otimes b)= d(x)b$ and $e(b) = 1_X \otimes b$ respectively. This reflexive graph is equipped with a multiplication by setting \[m(x \otimes b, x' \otimes b') = xx' \otimes \epsilon (b) b',\]
 for any $(x \otimes b, x' \otimes b')$ in the pullback $P$ defined by 
 \[\xymatrix{P \ar[r]^-{\pi_2} \ar[d]_{\pi_1} &  X \rtimes B \ar[d]^{p_1} \\
 X \rtimes B \ar[r]_-{p_2} & B.} \]
The correspondence described above naturally extends to morphisms, yielding two functors  
\begin{align*}
F &\colon \mathsf{RMG}(\Hopf_{K, coc}) \to \mathsf{HXMod_{K, coc}} ,\\
G &\colon \mathsf{HXMod_{K, coc}} \to \mathsf{RMG}(\Hopf_{K, coc}).
\end{align*} 
  
These functors give rise to an equivalence of categories.
\end{proof}
In \cite{Jan}, the author defined the internal crossed modules in any semi-abelian category $\C$, noted $\sf XMod(\C)$ and proved that they are equivalent to internal groupoids in $\C$. Hence, Theorem \ref{equi xmod 1} implies that the category $\mathsf{HXMod_{K, coc}}$ is equivalent to the category $\sf XMod(Hopf_{K,coc})$ of internal crossed modules in the category of cocommutative Hopf algebras (see \cite{GSV}).

\subsection{Cartier-Gabriel-Konstant-Milnor-Moore theorem}
The semi-direct product of cocommutative Hopf algebras is a useful tool allowing us to decompose any cocommutative Hopf algebras (over an algebraically closed field) into a ``group part" and a ``Lie algebra part". More precisely, we recall how we can decompose a Hopf algebra in a \textit{group-like} part and a \textit{primitive} part, thanks to the following functors,
\begin{align*}
&\mathcal{G} \colon \mathsf{Hopf_{K,coc}} \to \mathsf{Grp}, H \to G_H = \{ x \in H \mid \Delta(x) = x \otimes x, \epsilon(x) = 1 \},\\
&\mathsf{K}[-] \colon \mathsf{Grp} \to \mathsf{Hopf_{K,coc}} \colon G \to \mathsf{K}[G],\\
&\mathcal{P} \colon \mathsf{Hopf_{K,coc}} \to \mathsf{LieAlg_K}, H \to L_H = \{x \in H \mid \Delta(x) = 1_H \otimes x + x \otimes 1_H\},\\
&\mathsf{U} \colon \mathsf{LieAlg_K} \to \mathsf{Hopf_{K,coc}} \colon  L \to \mathsf{U}(L),
\end{align*} 
where $\mathsf{K}[G]$ denotes the group Hopf algebra of the group $G$, the Hopf algebra structure is given by $\Delta(g) = g \otimes g$, $\epsilon(g)=1$ and $S(g)=g^{-1}$ for any $g \in G$,
and $\mathsf{U}(L)$ is the universal enveloping algebra of the Lie algebra $L$ with a Hopf algebra structure given by $\Delta(x) = 1_L \otimes x + x \otimes 1_L $, $\epsilon(x) =0 $ and $S(x) = -x$ for any $x \in L$. 
The above functors can be summed up to the following adjunctions.
\begin{equation}\label{adj}
\begin{tikzpicture}[descr/.style={fill=white},scale=1.2,baseline=(A.base)]
\node at (-1.5,0) {$\perp$};
\node at (1.5,0) {$\perp$};
\node (A) at (0,0) {$\mathsf{Hopf_{K,coc}}$};
\node (B) at (3,0) {$\mathsf{LieAlg}_K.$};
\node (C) at (-2.5,0) {$\mathsf{Grp}$};
 \path[->,font=\scriptsize]
([yshift=5pt]C.east) edge node[above] {$\mathsf{K}[-]$} ([yshift=5pt]A.west)
 ([yshift=-5pt]A.west) edge node[below] {$\mathcal{G}$}([yshift=-5pt]C.east) 
 ([yshift=5pt]B.west) edge node[above] {$\mathsf{U}$}([yshift=5pt]A.east) 
([yshift=-5pt]A.east) edge node[below] {$\mathcal{P}$} ([yshift=-5pt]B.west);
\end{tikzpicture}
\end{equation}
Moreover, $\mathsf{U}(L_H)$ can be seen as a $\mathsf{K}[G_H]$-module thanks to the following linear map
\[ g \triangleright x = gxS(g),\]
for any $x \in L_H$ and $g \in G_H$. 

 The following Theorem
 gives us a way to decompose any cocommutative Hopf algebra over an algebraically closed field $K$ of characteristic zero in a  \textit{group-like} part and a \textit{primitive} part \cite{MM}.
 
 \begin{theorem}[Cartier-Gabriel-Konstant-Milnor-Moore] \label{CGKMM}
Let  $H$ be a cocommutative Hopf algebra over an algebraically closed field $K$ of characteristic zero. We have the following isomorphism   \[  U(L_H) \rtimes K[G_H] \to H \colon x \ox g \mapsto xg.\]
\end{theorem}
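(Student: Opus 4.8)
The plan is to construct the inverse map explicitly and check that both composites are the identity. Define $\Psi \colon H \to U(L_H) \rtimes K[G_H]$ and show it is a well-defined Hopf algebra morphism, and then verify $\Psi$ and $\Phi$ (where $\Phi(x \ox g) = xg$) are mutually inverse. Since we are allowed to assume the classical Cartier--Gabriel--Kostant--Milnor--Moore structure theorem, the real content here is to package that decomposition as an isomorphism of Hopf algebras of the specific form $U(L_H) \rtimes K[G_H] \to H$, i.e.\ to identify the smash product structure on the right-hand side with the algebra structure inherited from $H$.

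First I would recall that over an algebraically closed field $K$ of characteristic zero, the classical theorem says that as a coalgebra $H$ is the direct sum of its ``irreducible components'', that $G_H$ is a linearly independent set of group-likes spanning the coradical, and that each irreducible component is isomorphic (via translation by the corresponding group-like) to the irreducible component of $1_H$, which in turn is $U(L_H)$. Concretely this yields that the multiplication map $U(L_H) \ox K[G_H] \to H$, $x \ox g \mapsto xg$, is a linear isomorphism. So $\Phi$ is already bijective and linear; it remains only to check it is an algebra and coalgebra morphism when the domain carries the smash product Hopf structure associated to the module structure $g \triangleright x = gxS(g) = gxg^{-1}$ recalled in the excerpt.

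Next I would verify compatibility with the structure maps one at a time, using Sweedler notation. For multiplicativity, I would compute $\Phi((x \ox g)(x' \ox g'))$: by the smash product formula $M_{U(L_H) \rtimes K[G_H]}((x\ox g)\ox(x'\ox g')) = x(g_1 \triangleright x') \ox g_2 g'$, and since $g$ is group-like, $g_1 = g_2 = g$, so this is $x(gx'S(g)) \ox gg'$, whose image under $\Phi$ is $x g x' S(g) g g' = x g x' g'$ (using $S(g)g = \epsilon(g)1_H = 1_H$), which equals $\Phi(x\ox g)\Phi(x'\ox g')$. For counitality and comultiplicativity one uses that $\Delta_{U(L_H)\rtimes K[G_H]}(x \ox g) = x_1 \ox g_1 \ox x_2 \ox g_2 = x_1 \ox g \ox x_2 \ox g$ (again $g$ group-like), so $(\Phi \ox \Phi)\Delta(x \ox g) = x_1 g \ox x_2 g = (xg)_1 \ox (xg)_2$ since $\Delta$ in $H$ is multiplicative and $\Delta(g) = g \ox g$; similarly $\epsilon(xg) = \epsilon(x)\epsilon(g) = \epsilon_{U(L_H) \rtimes K[G_H]}(x \ox g)$. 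Units are clear: $\Phi(1_{U(L_H)} \ox 1_{K[G_H]}) = 1_H 1_H = 1_H$. Hence $\Phi$ is a bijective Hopf algebra morphism, so an isomorphism.

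The main obstacle is establishing that $\Phi$ is a \emph{linear} bijection, since that is precisely the non-formal input of the Cartier--Gabriel--Kostant--Milnor--Moore theorem (requiring $K$ algebraically closed and of characteristic zero, so that the coradical is spanned by group-likes and each pointed irreducible component is an enveloping algebra); once that is granted, the verifications above are routine coalgebra-algebra bookkeeping that only exploit the fact that elements of $G_H$ are group-like. I would therefore cite \cite{MM} (or \cite{Sweedler}) for the bijectivity and present the structure-map compatibilities as the explicit check that the transported Hopf structure on $U(L_H) \ox K[G_H]$ is exactly the smash product one, with the module action $g \triangleright x = gxS(g)$ being forced by conjugation inside $H$.
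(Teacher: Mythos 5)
The paper does not prove this statement at all: it is recalled as a classical theorem and simply cited to \cite{MM}, so there is no in-paper argument to compare against. Your proposal is a correct and standard way to fill in the details — you rightly isolate the only non-formal input (the linear bijectivity of $x \ox g \mapsto xg$, which is the content of the classical Cartier--Gabriel--Kostant--Milnor--Moore decomposition over an algebraically closed field of characteristic zero) and the remaining verifications that $\Phi$ respects the smash product multiplication, comultiplication, counit and unit are routine and correctly carried out, using only that elements of $G_H$ are group-like (and extending by linearity to all of $K[G_H]$).
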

\subsection{Double internal groupoids and cat$^2$-Hopf algebras}
Since $\mathsf{HXMod_{K, coc}}$  is semi-abelian, we get the following result
\begin{lemma}\label{equiv preli}
We have the following equivalence of categories,
\begin{align*}
\mathsf{XMod}(\mathsf{XMod(\Hopf_{K, coc}})) &\cong \mathsf{XMod}(\mathsf{HXMod_{K, coc}}) \cong \sf{Grpd}(\mathsf{HXMod_{K, coc}}) \\ &\cong \sf{Grpd}(\mathsf{Grpd}(\Hopf_{K, coc})) = \sf{Grpd}^2(\Hopf_{K, coc}).
\end{align*}
\end{lemma}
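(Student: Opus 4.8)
The plan is to read the displayed chain from left to right, the only genuinely new input being that forming internal crossed modules, resp.\ internal groupoids, turns an equivalence of (semi-abelian) categories into an equivalence of the associated categories of internal structures; granted this, everything reduces to the one-dimensional statements already recalled above. The three facts I would invoke are: (i) the equivalences $\mathsf{HXMod_{K, coc}} \cong \mathsf{XMod}(\Hopf_{K, coc})$ (the remark following Proposition~\ref{equi xmod 1}) and $\mathsf{HXMod_{K, coc}} \cong \mathsf{Grpd}(\Hopf_{K, coc})$ (Proposition~\ref{equi xmod 1} itself); (ii) the fact (see \cite{BG2}) that the category of internal crossed modules in a semi-abelian category is again semi-abelian; and (iii) Janelidze's theorem \cite{Jan} that $\mathsf{XMod}(\C) \cong \mathsf{Grpd}(\C)$ for every semi-abelian $\C$.

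First I would record that $\mathsf{HXMod_{K, coc}}$ is a semi-abelian category. Indeed, by (i) it is equivalent to $\mathsf{XMod}(\Hopf_{K, coc})$, which is semi-abelian by (ii) since $\Hopf_{K, coc}$ is semi-abelian (see \cite{GSV}); and a category equivalent to a semi-abelian one is again semi-abelian, all the axioms in the definition being invariant under equivalence. Applying Janelidze's theorem (iii) to $\C = \mathsf{HXMod_{K, coc}}$ then produces the middle isomorphism
\[ \mathsf{XMod}(\mathsf{HXMod_{K, coc}}) \cong \mathsf{Grpd}(\mathsf{HXMod_{K, coc}}). \]

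It remains to produce the two outer isomorphisms, which I would handle uniformly. The assignments $\C \mapsto \mathsf{XMod}(\C)$ and $\C \mapsto \mathsf{Grpd}(\C)$ send equivalences to equivalences: an internal crossed module, resp.\ an internal groupoid, is a diagram in $\C$ subject to conditions expressed purely through finite limits (kernels, pullbacks) and the fibration of points, all of which any equivalence of categories preserves; hence an equivalence $F \colon \C \to \D$ induces, by applying it objectwise to such diagrams, equivalences $\mathsf{XMod}(\C) \cong \mathsf{XMod}(\D)$ and $\mathsf{Grpd}(\C) \cong \mathsf{Grpd}(\D)$. Feeding the equivalence $\mathsf{XMod}(\Hopf_{K, coc}) \cong \mathsf{HXMod_{K, coc}}$ from (i) into the first yields $\mathsf{XMod}(\mathsf{XMod}(\Hopf_{K, coc})) \cong \mathsf{XMod}(\mathsf{HXMod_{K, coc}})$; feeding the equivalence $\mathsf{Grpd}(\Hopf_{K, coc}) \cong \mathsf{HXMod_{K, coc}}$ of Proposition~\ref{equi xmod 1} into the second yields $\mathsf{Grpd}(\mathsf{HXMod_{K, coc}}) \cong \mathsf{Grpd}(\mathsf{Grpd}(\Hopf_{K, coc})) = \mathsf{Grpd}^2(\Hopf_{K, coc})$, the last identity being the definition of the notation. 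Composing the three isomorphisms gives the asserted chain.

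The step deserving care — and the one I would flag as the main (if mild) obstacle — is the preservation of $\mathsf{XMod}(-)$ under equivalences: one must check that the internal-action datum of an internal crossed module is transported correctly along $F$, i.e.\ that $F$ commutes, up to coherent isomorphism, with the change-of-base functors between the fibres of the points fibration. This holds for any equivalence because split extensions, their kernels, and the comparison maps that encode internal actions are all limit-defined, so no Hopf-algebraic or cocommutativity-specific input is needed; it is essentially bookkeeping. It is also worth emphasising that Proposition~\ref{equi xmod 1} supplies an honest \emph{equivalence of categories}, which is precisely what legitimises substituting $\mathsf{HXMod_{K, coc}}$ for $\mathsf{XMod}(\Hopf_{K, coc})$ and for $\mathsf{Grpd}(\Hopf_{K, coc})$ in (ii), (iii) and in the $2$-functoriality argument above.
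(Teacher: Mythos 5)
Your proposal is correct and follows exactly the route the paper intends: the paper offers no written proof beyond the remark that $\mathsf{HXMod_{K, coc}}$ is semi-abelian, and the implicit argument is precisely yours — Janelidze's theorem applied to $\mathsf{HXMod_{K, coc}}$ for the middle equivalence, together with the fact that $\mathsf{XMod}(-)$ and $\mathsf{Grpd}(-)$ carry the equivalences of Proposition~\ref{equi xmod 1} to equivalences for the outer ones. Your explicit attention to the transport of the internal-action data along an equivalence is a useful addition rather than a deviation.
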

Here $\mathsf{Grpd}^2(\Hopf_{K, coc})$ denotes the category of double internal groupoids in $\Hopf_{K, coc}$.
In the next section, we will describe the category $\mathsf{XMod}(\mathsf{HXMod_{K, coc}}) $. But first, let us give a description of $\sf{Grpd}^2(\Hopf_{K,coc})$ the category of double internal groupoids. A  \textit{double internal groupoid} of Hopf algebras is an internal groupoid in $\sf{Grpd}(\Hopf_{K, coc})$, i.e.\ it is defined by a commutative diagram
\begin{equation}\label{double groupoid}
\begin{tikzpicture}[descr/.style={fill=white},scale=1.2]
\node (A) at (0,0) {$H_1$};
\node (B) at (0,3) {$H_{\{1,2\}}$};
\node (C) at (3,3) {$H_2$};
\node (D) at (3,0) {$H_\emptyset ,$};
 \path[->,font=\scriptsize]
 (D.west) edge node[descr] {$\iota'_2$} (A.east)
 ([xshift=7pt]B.south) edge node[right] {$\gamma_1$} ([xshift=7pt]A.north)
(A.north) edge node[descr] {$\iota_1$} (B.south)
([xshift=-7pt]B.south) edge node[left] {$\delta_1$} ([xshift=-7pt]A.north)
 ([xshift=7pt]C.south) edge node[right] {$\gamma'_1$} ([xshift=7pt]D.north)
(D.north) edge node[descr] {$\iota'_1$} (C.south)
([xshift=-7pt]C.south) edge node[left] {$\delta'_1$} ([xshift=-7pt]D.north)
([yshift=7pt]A.east) edge node[above] {$\gamma'_2$} ([yshift=7pt]D.west)
([yshift=-7pt]A.east) edge node[below] {$\delta'_2$} ([yshift=-7pt]D.west)
([yshift=7pt]B.east) edge node[above] {$\gamma_2$} ([yshift=7pt]C.west)
(C.west) edge node[descr] {$\iota_2$} (B.east)
([yshift=-7pt]B.east) edge node[below] {$\delta_2$} ([yshift=-7pt]C.west);
\end{tikzpicture}
\end{equation} where the $(H_{\{1,2\}},H_1,\gamma_1, \delta_1, \iota_1)$, $(H_{\{1,2\}},H_2,\gamma_2, \delta_2, \iota_2)$, $(H_{2},H_\emptyset,\gamma'_1, \delta'_1, \iota'_1)$ and $(H_1,H_\emptyset,\gamma'_2, \delta'_2, \iota'_2)$ are the reflexive graphs underlying internal groupoid structures. Thanks to the isomorphism of Proposition \ref{Iso groupoid RGM} we can prove that the notion of double internal groupoids is equivalent to the notion of \textit{cat$^2$-Hopf algebras} that is defined as follows. 
\begin{definition}\label{Def_cat2}
A \emph{cat$^2$-Hopf algebra} is a 7-tuple $(H,N,M,s_N,t_N,s_M,t_M)$ where 
 \begin{tikzpicture}[descr/.style={fill=white},xscale=0.75,baseline=(A.base)]
\node (A) at (0,0) {$H$};
\node (D) at (3,0) {$N$};
 \path[->,font=\scriptsize]
(D.west) edge node[descr] {$i_N$} (A.east)
([yshift=7pt]A.east) edge node[above] {$s_N$} ([yshift=7pt]D.west)
([yshift=-7pt]A.east) edge node[below] {$t_N$} ([yshift=-7pt]D.west);
\end{tikzpicture}
 and 
   \begin{tikzpicture}[descr/.style={fill=white},xscale=0.75,baseline=(A.base)]
\node (A) at (0,0) {$H$};
\node (D) at (3,0) {$M$};
 \path[->,font=\scriptsize]
(D.west) edge node[descr] {$i_M$} (A.east)
([yshift=7pt]A.east) edge node[above] {$s_M$} ([yshift=7pt]D.west)
([yshift=-7pt]A.east) edge node[below] {$t_M$} ([yshift=-7pt]D.west);
\end{tikzpicture}
are  cat$^1$-Hopf algebras, where the injective morphisms $i_N \colon N \to H$ and $i_M \colon M \to H$ are from now on regarded as Hopf subalgebra inclusions and which will be omitted, satisfying the following four conditions of compatibility 
\begin{align*}
&(2C1) \;s_N \cdot s_M = s_M \cdot s_N,\\
&(2C2)\; t_N \cdot t_M = t_M \cdot t_N,\\
&(2C3) \;s_N \cdot t_M = t_M \cdot s_N,\\
&(2C4) \;t_N \cdot s_M = s_M \cdot t_N.\\
\end{align*} 
   
\end{definition}

\begin{definition}
A \emph{morphism of cat$^2$-Hopf algebras} from the cat$^2$-Hopf algebra $(H,N,M,s_N,t_N$, $s_M,t_M)$ to the cat$^2$-Hopf algebra $(H',N',M',s'_{N'},t'_{N'},s'_{M'},t'_{M'})$ is a morphism of Hopf algebras $f \colon H \to H'$ such that the image of $f|_N := f \cdot i_N$ ($f|_M := f \cdot i_M$) lies in $N'$ ($M'$) and the four following diagrams commute
   
\begin{center}
\begin{tikzpicture}[descr/.style={fill=white},scale=0.9]
\node (A) at (0,0) {$H'$};
\node (B) at (0,2) {$H$};
\node (C) at (2,2) {$N$};
\node (D) at (2,0) {$N'$};
  \path[-stealth]
 (B.south) edge node[right] {$f$} (A.north) 
 (C.south) edge node[right] {$f|_N$} (D.north);
 \path[->,font=\scriptsize]
([yshift=4pt]A.east) edge node[above] {$s'_{N'}$} ([yshift=4pt]D.west)
([yshift=-4pt]A.east) edge node[below] {$t'_{N'}$} ([yshift=-4pt]D.west)
([yshift=4pt]B.east) edge node[above] {$s_N$} ([yshift=4pt]C.west)
([yshift=-4pt]B.east) edge node[below] {$t_N$} ([yshift=-4pt]C.west);
\end{tikzpicture}
\begin{tikzpicture}[descr/.style={fill=white},scale=0.9]
\node (A) at (0,0) {$H'$};
\node (B) at (0,2) {$H$};
\node (C) at (2,2) {$M$};
\node (D) at (2,0) {$M'.$};
  \path[-stealth]
 (B.south) edge node[right] {$f$} (A.north) 
 (C.south) edge node[right] {$f|_M$} (D.north);
 \path[->,font=\scriptsize]
([yshift=4pt]A.east) edge node[above] {$s'_{M'}$} ([yshift=4pt]D.west)
([yshift=-4pt]A.east) edge node[below] {$t'_{M'}$} ([yshift=-4pt]D.west)
([yshift=4pt]B.east) edge node[above] {$s_M$} ([yshift=4pt]C.west)
([yshift=-4pt]B.east) edge node[below] {$t_M$} ([yshift=-4pt]C.west);
\end{tikzpicture}
\end{center}
In other words, a morphism $f \colon H \to H'$ is a morphism of cat$^2$-Hopf algebras if and only if it is a morphism of cat$^1$-Hopf algebras for the two underlying cat$^1$-Hopf algebras.
\end{definition}
The category of cat$^2$-Hopf algebras is denoted by $\sf{cat}^2(\Hopf_{K,coc})$.
 Proposition \ref{Iso groupoid RGM} implies an isomorphism between the cat$^2$-Hopf algebras and the double groupoids of Hopf algebras. For the sake of clarity we explicitly describe this isomorphism.
\begin{proposition}\label{iso group cat2}
There is an isomorphism between $\sf{cat}^2(\Hopf_{K,coc})$ and $\sf{Grpd}^2(\Hopf_{K,coc})$.
\end{proposition}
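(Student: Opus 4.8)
The plan is to build the isomorphism of categories directly from Proposition \ref{Iso groupoid RGM}, applied ``one dimension at a time''. Recall that a double internal groupoid in $\Hopf_{K,coc}$ is an internal groupoid in $\sf{Grpd}(\Hopf_{K,coc})$, which by Proposition \ref{Iso groupoid RGM} is isomorphic to $\sf{cat^1}(\Hopf_{K,coc})$; so a double groupoid amounts to an internal groupoid in $\sf{cat^1}(\Hopf_{K,coc})$, equivalently an internal $\sf{cat^1}$-structure in $\sf{Grpd}(\Hopf_{K,coc})$. The idea is that the two commuting ``$\sf{cat^1}$'' directions of a $\sf{cat^2}$-Hopf algebra correspond exactly to the two groupoid directions of the square \eqref{double groupoid}, and that the four compatibility conditions $(2C1)$--$(2C4)$ are precisely the commutativity of the four squares in \eqref{double groupoid} together with the requirement that the horizontal structure morphisms be morphisms in the vertical direction.

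First I would unravel what an internal groupoid in $\sf{Grpd}(\Hopf_{K,coc})$ is concretely. Given the double groupoid \eqref{double groupoid}, apply the isomorphism $\sf{Grpd}(\Hopf_{K,coc}) \cong \sf{cat^1}(\Hopf_{K,coc})$ to the ``inner'' (say vertical) groupoid structures on the two reflexive graphs $H_{\{1,2\}} \rightrightarrows H_1$ and $H_2 \rightrightarrows H_\emptyset$: this replaces each vertical groupoid by a $\sf{cat^1}$-Hopf algebra, i.e. by a reflexive graph satisfying the $[\HKer(\delta),\HKer(\gamma)]=0$ condition, which is an equational condition (commutation of the two kernels) rather than an extra piece of structure. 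Since that condition is preserved by the relevant functors and involves no choice, the passage is a genuine isomorphism of categories, not merely an equivalence — this is what lets us upgrade the ``equivalence'' language of Janelidze's theory to the ``isomorphism'' of Proposition \ref{Iso groupoid RGM} in the statement. Doing this for both vertical directions turns \eqref{double groupoid} into a diagram in which the top and bottom rows are reflexive graphs and the whole thing is an internal groupoid in the category of reflexive graphs; by the same argument applied now in the horizontal direction, ``internal groupoid in reflexive graphs'' becomes ``$\sf{cat^1}$ in reflexive graphs'', which is exactly a reflexive graph $H \rightrightarrows N$ in $\sf{cat^1}(\Hopf_{K,coc})$, i.e. the data $(H,M,s_M,t_M,i_M)$ being a $\sf{cat^1}$-Hopf algebra equipped with a second $\sf{cat^1}$-structure $(H,N,s_N,t_N,i_N)$ compatible with it. Here I would read off the correspondence: $H = H_{\{1,2\}}$, $M = H_1$, $N = H_2$ (so $H_\emptyset$ is recovered as $s_N(M) \cap \dots$, or rather as the common image, and indeed as $s_M s_N(H)$), with $s_M, t_M$ the vertical structure morphisms and $s_N, t_N$ the horizontal ones.

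Next I would check that the four conditions $(2C1)$--$(2C4)$ are exactly the content of the compatibility between the two $\sf{cat^1}$-structures. On one side, the commutativity of the four squares in \eqref{double groupoid} says precisely $s_M s_N = s_N s_M$, $t_M t_N = t_N t_M$, $s_M t_N = t_N s_M$, $t_M s_N = s_N t_M$ (using the dictionary above, where $\delta,\gamma$ in one direction commute with $\delta,\gamma$ in the other because they are groupoid morphisms for the other structure and the other structure is again given by $s,t$ maps). Conversely, given a $\sf{cat^2}$-Hopf algebra one reconstructs the square by setting $H_\emptyset := s_N(M) = s_M(N)$ (which coincide by $(2C1)$), with the eight structure arrows being $s_M,t_M$ and their restrictions/corestrictions, the identity arrows being the splittings of the $\sf{cat^1}$-structures, and the multiplications being the unique ones guaranteed by the $\sf{cat^1}$ $\Leftrightarrow$ $\sf{RMG}$ $\Leftrightarrow$ $\sf{Grpd}$ isomorphism of Proposition \ref{Iso groupoid RGM}; one must verify that the horizontal multiplication $m_N$ is a morphism of the vertical groupoids and vice versa, which follows from interchange, itself a consequence of the fact that the $\sf{cat^1}$-structures commute as internal categories — and in the Hopf setting this is again just the Huq-commutation of kernels, automatically available. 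Finally I would observe that on morphisms the dictionary is transparent: a morphism of $\sf{cat^2}$-Hopf algebras is by definition a single Hopf morphism $f\colon H \to H'$ that is a morphism of $\sf{cat^1}$-Hopf algebras in both directions, which under the correspondence is exactly a quadruple of Hopf morphisms $(f, f|_M, f|_N, f|_{H_\emptyset})$ commuting with all eight structure arrows — i.e. a morphism of double groupoids — so the assignment is a functor, bijective on objects and fully faithful, hence an isomorphism of categories.

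The main obstacle I expect is bookkeeping rather than conceptual: one has to be careful that ``internal groupoid in $\sf{cat^1}$'' is read off correctly, because the inner groupoid structure of \eqref{double groupoid} lives on two different reflexive graphs simultaneously and one must check that the isomorphism of Proposition \ref{Iso groupoid RGM}, applied fibrewise, assembles into an isomorphism at the level of internal-category objects (i.e. that it is compatible with the relevant pullbacks and multiplications). The clean way to handle this is to note that the isomorphism $\sf{cat^1}(\C) \cong \sf{Grpd}(\C)$ of Proposition \ref{Iso groupoid RGM} is the identity on underlying reflexive graphs and merely adds/forgets the (uniquely determined) multiplication and the commutation condition, so it commutes with any functor that preserves finite limits; applying it to $\C = \sf{Grpd}(\Hopf_{K,coc})$ (which has all finite limits, being the category of internal groupoids in a finitely complete category) and then once more to $\C = \Hopf_{K,coc}$ gives the result with no genuine choices, hence an isomorphism and not just an equivalence. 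I would therefore phrase the proof as: (i) identify $\sf{Grpd}^2 = \sf{Grpd}(\sf{Grpd}(\Hopf_{K,coc})) \cong \sf{cat^1}(\sf{cat^1}(\Hopf_{K,coc}))$ via Proposition \ref{Iso groupoid RGM} applied twice; (ii) observe that an object of $\sf{cat^1}(\sf{cat^1}(\Hopf_{K,coc}))$ unfolds exactly to Definition \ref{Def_cat2}, the conditions $(2C1)$--$(2C4)$ being the requirement that the outer $\sf{cat^1}$-structure's morphisms respect the inner one; (iii) check morphisms match; and (iv) conclude.
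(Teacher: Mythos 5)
Your proposal is correct and follows essentially the same route as the paper: the paper likewise derives the isomorphism from Proposition \ref{Iso groupoid RGM} and then writes out the explicit dictionary $H=H_{\{1,2\}}$, $N,M = H_1,H_2$, $H_\emptyset = N\cap M$, with the structure maps $\iota\cdot\gamma$, $\iota\cdot\delta$ and their restrictions, checking that the restricted maps land in the intersection, that the four kernel commutators vanish, and that $(2C1)$--$(2C4)$ correspond to the commutativity of the squares in \eqref{double groupoid}. The only difference is presentational: you phrase the first step as iterating Proposition \ref{Iso groupoid RGM} on the nested category $\sf{Grpd}(\sf{Grpd}(\Hopf_{K,coc}))$, whereas the paper sidesteps the need to verify the hypotheses of the \cite{CPP} result for $\sf{Grpd}(\Hopf_{K,coc})$ itself by applying the proposition separately to each of the four reflexive graphs in the diagram, which is also what your unfolding in the later paragraphs amounts to.
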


\begin{proof}
From a cat$^2$-Hopf algebra $(H,N,M,s_N,t_N,s_M,t_M)$ we construct the double groupoid
\begin{center}
\begin{tikzpicture}[descr/.style={fill=white},scale=1.2]
\node (A) at (0,0) {$N$};
\node (B) at (0,3) {$H$};
\node (C) at (3,3) {$M$};
\node (D) at (3,0) {$N \cap M,$};
 \path[->,font=\scriptsize]
 ([xshift=8pt, yshift=-4pt]B.south) edge node[right] {$s_N$} ([xshift=8pt]A.north)
(A.north) edge node[descr] {$i_N$} (B.south)
([xshift=-8pt]B.south) edge node[left] {$t_N$} ([xshift=-8pt]A.north)
 ([xshift=7pt]C.south) edge node[right] {$s_N\cdot i_M$} ([xshift=7pt]D.north)
(D.north) edge node[above] {$ $} (C.south) 
([xshift=-7pt, yshift=-4pt]C.south) edge node[left] {$t_N\cdot i_M$} ([xshift=-7pt]D.north)
([yshift=7pt, xshift=4pt]A.east) edge node[above] {$s_M \cdot i_N$} ([yshift=7pt]D.west)
(D.west) edge (A.east)
([yshift=-7pt]A.east) edge node[below] {$t_M \cdot i_N$} ([yshift=-7pt]D.west)
([yshift=7pt]B.east) edge node[above] {$s_M$} ([yshift=7pt]C.west)
(C.west) edge node[descr] {$i_M$} (B.east)
([yshift=-7pt]B.east) edge node[below] {$t_M$} ([yshift=-7pt]C.west);
\end{tikzpicture}
\end{center}
where the maps $i$ are all inclusions. The Hopf algebra $ N \cap M$ is the intersection of $N$ and $M$ seen as Hopf subalgebras of $H$, it is equivalent to say that it is the pullback of $i_M$ along $i_N$. All the squares in this diagram commute thanks to the definition of cat$^2$-Hopf algebra. Note that this square is well-defined, for example, $t_N \cdot i_M$ lands into the intersection. Indeed, let $m$ be an element of $M$, then it is clear that $t_N(m) (= t_N \cdot i_M(m))$ belongs to $N$ and $t_N (m) = t_N \cdot t_M  (m) \;  \stackrel{\mathclap{(2C2)}}{=} \;  t_M \cdot t_N  (m)$ belongs to $M$.

Moreover, by using the correspondence between cat$^1$-Hopf algebras and internal groupoids, the above diagram is a double groupoid of cocommutative Hopf algebras since the kernels and their restrictions commute, 
\begin{eqnarray*}
&[HKer(s_N), HKer(t_N)] = 0,\\
&[HKer(s_M), HKer(t_M)] = 0,\\
&[HKer(s_N \cdot i_M), HKer(t_N  \cdot i_M)] = 0,\\
&[HKer(s_M  \cdot i_N), HKer(t_M  \cdot i_N)] = 0.
\end{eqnarray*}
 
On the other way around, if we have a double groupoid \eqref{double groupoid}, the $7$-tuple $(H_{\{1,2\}},H_1,H_2,\iota_1 \cdot \gamma_1,\iota_1 \cdot \delta_1,\iota_2 \cdot \gamma_2, \iota_2 \cdot \delta_2 )$ is a cat$^2$-Hopf algebra. Indeed, the kernels have trivial commutators since the top reflexive graph and the left one are internal groupoids. Moreover, we have all the conditions of compatibility between the four maps thanks to the commutativity of \eqref{double groupoid}. For example, the condition $(2C1)$ holds as we can see in the following equalities,
\begin{align*}
(\iota_1 \cdot \gamma_1) \cdot (\iota_2 \cdot \gamma_2) &= \iota_1 \cdot \iota'_2 \cdot \gamma'_1 \cdot \gamma_2 \\
&= \iota_2 \cdot \iota'_1 \cdot \gamma'_2 \cdot \gamma_1 \\
&= (\iota_2 \cdot \gamma_2) \cdot (\iota_1 \cdot \gamma_1) .
\end{align*} 
\end{proof}

To finish this recall about double internal structures, we give a description of the category $\sf Pt^2(Hopf_{K,coc})$ of 2-fold split epimorphisms of cocommutative Hopf algebras. In a similar way that the split epimorphisms are underlying structures of the internal groupoids, the 2-fold split epimorphisms of cocommutative Hopf algebras will be underlying structures of the cat$^2$-Hopf algebras (and the internal double groupoids of Hopf algebras).

\begin{definition}
A \emph{$2$-fold split epimorphism} of cocommutative Hopf algebras is a split epimorphism in $\mathsf{Pt}(\sf Hopf_{K,coc})$, the category of split epimorphism in $\sf Hopf_{K,coc}$, i.e.\ a $2$-fold split epimorphism is given by the following diagram, where all the squares commute and $\gamma_i \cdot \iota_i = Id$ 
\begin{equation} 
\begin{tikzpicture}[descr/.style={fill=white},scale=1.2]
\node (A) at (0,0) {$H_{\{2\}}$};
\node (B) at (0,3) {$H_{\{1,2\}}$};
\node (C) at (3,3) {$H_{\{1\}}$};
\node (D) at (3,0) {$H_{\{\emptyset\}}.$};
 \path[->,font=\scriptsize]
 ([xshift=-4pt]B.south) edge node[left] {$\gamma_1$} ([xshift=-4pt]A.north)
([xshift=4pt]A.north) edge node[right] {$\iota_1$} ([xshift=4pt]B.south)
 ([xshift=-4pt]C.south) edge node[left] {$\gamma'_1$} ([xshift=-4pt]D.north)
([xshift=4pt]D.north) edge node[right] {$\iota'_1$} ([xshift=4pt]C.south)
([yshift=4pt]A.east) edge node[above] {$\gamma'_2$} ([yshift=4pt]D.west)
([yshift=-4pt]D.west) edge node[below] {$i'_2$} ([yshift=-4pt]A.east)
([yshift=4pt]B.east) edge node[above] {$\gamma_2$} ([yshift=4pt]C.west)
([yshift=-4pt]C.west) edge node[below] {$\iota_2$} ([yshift=-4pt]B.east);
\end{tikzpicture}
\end{equation}
\end{definition}
Notice that by identification of $H$ with $H_{\{1,2\}}$, $N$ with $H_1$, $M$ with $H_2$ and $H_\emptyset$ with $N \cap M$ in $\sf Hopf_{K,coc}$, this definition is equivalent to two Hopf subalgebras $N$, $M$ of $H$ with 2 morphisms $s_N \colon H \to N$ and $s_M \colon H \to M$ such that $s_N\cdot i_N = Id_N$, $s_M \cdot i_M = Id_M$ and $s_N \cdot s_M = s_M \cdot s_N$, where $i$ are the inclusions, we denote it by $(H,N,M,s_N,s_M)$.

\begin{definition}
A \emph{morphism between $2$-fold epimorphisms of Hopf algebras} $(H,N,M,s_N,s_M)$ and $(H',N',M',s_{N'},s_{M'})$ is given by a morphism of Hopf algebras $f \colon H \to H'$ such that the image of $f|_N := f \cdot i_N$ ($f|_M := f \cdot i_M$) lies in $N'$ ($M'$) and the following diagrams commute
\begin{center}
\begin{tikzpicture}[descr/.style={fill=white},scale=0.9]
\node (A) at (0,0) {$H'$};
\node (B) at (0,2) {$H$};
\node (C) at (2,2) {$N$};
\node (D) at (2,0) {$N'$};
  \path[-stealth]
 (B.south) edge node[right] {$f$} (A.north) 
 (C.south) edge node[right] {$f|_N$} (D.north);
 \path[->,font=\scriptsize]
(A.east) edge node[above] {$s'_{N'}$} (D.west)
(B.east) edge node[above] {$s_N$} (C.west);
\end{tikzpicture}
\begin{tikzpicture}[descr/.style={fill=white},scale=0.9]
\node (A) at (0,0) {$H'$};
\node (B) at (0,2) {$H$};
\node (C) at (2,2) {$M$};
\node (D) at (2,0) {$M'.$};
  \path[-stealth]
 (B.south) edge node[right] {$f$} (A.north) 
 (C.south) edge node[right] {$f|_M$} (D.north);
 \path[->,font=\scriptsize]
(A.east) edge node[above] {$s'_{M'}$} (D.west)
(B.east) edge node[above] {$s_M$} (C.west);
\end{tikzpicture}
\end{center}
\end{definition}

We denote the category of $2$-split epimorphisms of cocommutative Hopf algebras by 
$\sf{Pt}^2(\Hopf_{K,coc})$ $ = \sf Pt(Pt(\Hopf_{K,coc})) $. The category $\sf{Grpd^2}(\Hopf_{K, coc})$ is a (not full) subcategory of this category.

\section{Definition of a Hopf crossed square}

As a preliminary notion, we define what we call a \emph{Hopf $2$-action}. Similarly as in $\sf Grp$ and $\sf Lie_K$, a Hopf $2$-action is an underlying structure of a Hopf crossed square. We define this notion and the further ones in the context of cocommutative Hopf algebras. Let us notice that the definitions make sense also in the non-cocommutative case. The following definition is inspired by the definitions of $2$-actions of groups and of Lie algebras given by Ellis in his thesis \cite{Ellis phd}.

\begin{definition}\label{2-action}
A \emph{Hopf $2$-action} $(L,M,N,P,h)$ is given by 4 cocommutative Hopf algebras $L$, $M$, $N$, $P$  such that $L$, $M$, $N$ are $P$-module Hopf algebras, $L$ is an $N$-module Hopf algebra and an $M$-module Hopf algebra, and $h \colon M \otimes N \to L$ is a coalgebra morphism. They have to satisfy the following conditions for any $m,m' \in M$, $n,n' \in N$, $l \in L$ and $p \in P$,
\begin{itemize}
\item[(2A1)] $ ({{p_1} \triangleright m} ) \triangleright ( {p_2}  \triangleright l) = p \triangleright (m  \triangleright l)$,\\
$({{p_1}  \triangleright n})  \triangleright ({p_2} \triangleright l) = p  \triangleright (n  \triangleright l)$,
\item[(2A2)] $h$ is $P$-linear ,
\item[(2A3)]$h(1_M \otimes n) = \epsilon(n)1_L$,\\$h(m \otimes 1_N) = \epsilon(m)1_L$,
\item[(2A4)]$h(m \otimes nn') = h(m_1 \otimes n_1)( {n_2}  \triangleright h(m_2 \otimes n'))$ ,\\ $h(mm' \otimes n) = ({m_1}  \triangleright h(m' \otimes n_1))h(m_2 \otimes n_2)$ ,
\item[(2A5)]
$({m_1} \triangleright ({n_1} \triangleright l)) h( m_2 \otimes n_2) = h( m_1 \otimes n_1) ({n_2}  \triangleright ({m_2}  \triangleright l))$.
\end{itemize}
\end{definition}

\begin{definition}\label{morph hopf-2-action}
Let $(L,M,N,P,h)$ and $(L',M',N',P',h')$ be two  Hopf $2$-actions, a \emph{morphism of Hopf $2$-actions} is given by 4 Hopf algebra morphisms $\alpha \colon L \to L'$, $\beta \colon M \to M'$, $\gamma \colon N \to N'$ and $\delta \colon P \to P'$, such that 
$\alpha$, $\beta$, $\gamma$ and $\delta$ are morphisms of modules i.e.\
\[ \alpha(p \triangleright l) = {\delta(p)}  \triangleright \alpha(l),\]
\[ \alpha(m  \triangleright l) = {\beta(m)}  \triangleright \alpha(l),\]
\[ \alpha(n \triangleright l) = {\gamma(n)}  \triangleright \alpha(l),\]
\[ \beta(p  \triangleright m) = {\delta(p)}  \triangleright \beta(m),\]
\[ \gamma(p  \triangleright n) = {\delta(p)}  \triangleright \gamma(n). \]
Moreover, they are compatible with the coalgebra morphisms $h$ and $h'$, i.e.\ the following identity is satisfied \[ \alpha \cdot h = h' \cdot (\beta \otimes\gamma), \]
where $\beta \otimes \gamma \colon M \otimes N \to M' \otimes N'$.
\end{definition}
The category of Hopf $2$-actions is formed by Hopf $2$-actions and morphisms of Hopf $2$-actions and it is denoted by $\sf Act^2(\Hopf_{K,coc})$. We continue with the definitions of the objects and the morphisms of our category of interest: the category of Hopf crossed squares.
\begin{definition}\label{defHopf_square}
In $\mathsf{Hopf_{K,coc}}$, a \emph{Hopf crossed square} $(L,M,N,P,h, \lambda, \lambda', \mu, \nu)$ is a commutative square in $\mathsf{Hopf_{K,coc}}$,
\begin{center}
\begin{tikzpicture}[descr/.style={fill=white},scale=0.9]
\node (A) at (0,0) {$N$};
\node (B) at (0,2) {$L$};
\node (C) at (2,2) {$M$};
\node (D) at (2,0) {$P,$};
  \path[-stealth]
 (B.south) edge node[left] {$\lambda'$} (A.north) 
 (C.south) edge node[right] {$\mu$} (D.north)
(A.east) edge node[above] {$\nu$} (D.west)
(B.east) edge node[above] {$\lambda$} (C.west);
\end{tikzpicture}
\end{center}
where $L$, $M$, $N$ are P-modules Hopf algebras 
and $h \colon M \otimes N \to L$ a coalgebra map such that for any $l \in L$, $n,n' \in N$, $m,m' \in M$ and $p \in P$, the following properties hold
\begin{itemize}
\item[(CS1)] $(P,M,\mu)$, $(P,N,\nu)$ and $(P,L,\kappa = \mu \cdot \lambda = \nu \cdot \lambda')$ are Hopf crossed modules, 
\item[(CS2)] $\lambda$ and $\lambda'$ are $P$-linear,
\item[(CS3)] $h$ is $P$-linear,
\item[(CS4)] $\lambda \cdot h (m \otimes n ) = m_1 ({\nu(n)}  \triangleright S(m_2))$,  \\ $\lambda' \cdot h (m \otimes n ) = ({\mu(m)}  \triangleright n_1) S(n_2)$,
\item[(CS5)] $h( \lambda(l) \otimes n) = l_1({\nu(n)}  \triangleright S(l_2))$, \\$h (m \otimes \lambda'(l)) = ({\mu(m)}  \triangleright l_1)S(l_2)$,
\item[(CS6)]$h(m \otimes nn') = h(m_1 \otimes n_1)({\nu(n_2)} \triangleright h(m_2 \otimes n'))$, \\ $h(mm' \otimes n) = ({\mu(m_1)}  \triangleright h(m' \otimes n_1))h(m_2 \otimes n_2)$ .

\end{itemize}
\end{definition}

\begin{definition}
A \emph{morphism of Hopf crossed squares} between the crossed squares $(L,M,N,P,h,$ $ \lambda, \lambda', \mu, \nu)$ and $(\hat{L},\hat{M},\hat{N},\hat{P},\hat{h}, \hat{\lambda}, \hat{\lambda'}, \hat{\mu}, \hat{\nu})$ is given by 4 Hopf algebra morphisms $\alpha \colon L \to \hat{L}$, $\beta \colon M \to \hat{M}$, $\gamma \colon N \to \hat{N}$ and $\delta \colon P \to \hat{P}$, such that each face of the following cube commutes
\begin{center}
\begin{tikzpicture}[descr/.style={fill=white},baseline=(A.base)]
\node (X) at (0,-3) {$N$};
\node (A) at (0,0) {$L$};
\node (B) at (3,0) {$M$};
\node (C) at (3,-3) {$P$};
\node (X') at (1.5,-1.5) {$\hat{N}$};
\node (A') at (1.5,1.5) {$\hat{L}$};
\node (B') at (4.5,1.5) {$\hat{M}$};
\node (C') at (4.5,-1.5) {$\hat{P}$};
\path[-stealth] 
(X.north east) edge node[above] {$\gamma$} (X'.south west)
(C.north east) edge node[above] {$\delta$} (C'.south west)
 (A.north east) edge node[above]{$\alpha$}(A'.south west)
  (B.north east) edge node[above] {$\beta$} (B'.south west)
(A'.south) edge node[below,descr] {$\hat{\lambda}'$} (X'.north)
  (X'.east) edge node[above] {$\; \; \; \; \; \; \; \; \hat{\nu}$}(C'.west)
   (A'.east)  edge node[above] {$\hat{\lambda}$} (B'.west)
(B'.south) edge node[right] {$\hat{\mu} $} (C'.north)  
  (A.south) edge node[right] {$\lambda' $}  (X.north)
  (X.east) edge node[above] {$\nu $}  (C.west)
   (A.east)  edge node[above] {$ \; \; \; \; \; \; \lambda$} (B.west)
(B.south) edge node[descr] {$\mu$} (C.north);
\end{tikzpicture}
\end{center}
and $\alpha$, $\beta$, $\gamma$ and $\delta$ are morphisms of modules i.e.\
\[ \alpha(p  \triangleright l) = {\delta(p)}  \triangleright \alpha(l),\]
\[ \beta(p  \triangleright m) = {\delta(p)}  \triangleright \beta(m),\]
\[ \gamma(p  \triangleright n) = {\delta(p)}  \triangleright \gamma(n). \]
Moreover, they are compatible with the coalgebra morphisms $h$ and $\hat{h}$, i.e.\ we have the following identity \[ \alpha \cdot h = \hat{h} \cdot (\beta \otimes\gamma), \]
where $\beta \otimes \gamma \colon M \otimes N \to \hat{M} \otimes \hat{N}$.
\end{definition}

The Hopf crossed squares and the morphisms of Hopf crossed squares form the category of Hopf crossed squares, $\sf X^2(\Hopf_{K,coc})$. In a similar way as the action of Hopf algebras is a component of the definition of Hopf crossed module, there is a Hopf $2$-action structure underlying the notion of a Hopf crossed square. This is made clear thanks to the following proposition:

\begin{proposition}\label{act sub crossed square}
The category $\sf X^2(\Hopf_{K,coc})$ is a subcategory of $\sf Act^2(\Hopf_{K,coc})$.
\end{proposition}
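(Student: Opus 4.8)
The plan is to show that a Hopf crossed square $(L,M,N,P,h,\lambda,\lambda',\mu,\nu)$ carries, after forgetting the boundary morphisms $\lambda,\lambda',\mu,\nu$, an underlying Hopf $2$-action $(L,M,N,P,h)$, and that morphisms of Hopf crossed squares restrict to morphisms of Hopf $2$-actions; this gives the required (forgetful) subcategory inclusion. Since the definitions of Hopf crossed square and of Hopf $2$-action share the ambient data ($L,M,N$ as $P$-module Hopf algebras, $h\colon M\ox N\to L$ a coalgebra morphism), the work is purely to verify that the axioms (2A1)--(2A5) follow from (CS1)--(CS6), and then to observe that the morphism conditions match verbatim.

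First I would produce the two missing module structures. The definition of Hopf $2$-action requires $L$ to be an $M$-module Hopf algebra and an $N$-module Hopf algebra; these are not listed among the primitive data of a Hopf crossed square, so I would \emph{define} them via the $P$-actions and the boundary maps: set $m\act l := (\mu(m)_1)\act l$ ... more precisely $m\act l := \mu(m)\act l$ and $n\act l := \nu(n)\act l$, using that $(P,L,\kappa)$ is a Hopf crossed module by (CS1). Because $\mu$ and $\nu$ are Hopf algebra morphisms and $L$ is a $P$-module Hopf algebra, the identities \eqref{m et act}--\eqref{act et epsilon} for these induced actions are immediate: $(mm')\act l = \mu(mm')\act l = \mu(m)\act(\mu(m')\act l)$ by \eqref{m et act} for the $P$-action, and similarly for the remaining axioms, using that $\mu$ is a coalgebra morphism for \eqref{act et delta} and a counit-preserving map for \eqref{act et epsilon}. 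So the candidate Hopf $2$-action is $(L,M,N,P,h)$ with these induced $M$- and $N$-actions on $L$.

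Next I would check (2A1)--(2A5). Axiom (2A1) reads $({p_1}\act m)\act({p_2}\act l) = p\act(m\act l)$; unfolding the induced action this becomes $\mu(p_1\act m)\act(p_2\act l) = p\act(\mu(m)\act l)$, and since $\mu$ is $P$-linear by (CS2), $\mu(p_1\act m) = p_1\act\mu(m)$, so the left side is $(p_1\act\mu(m))\act(p_2\act l)$, which equals $p\act(\mu(m)\act l)$ by (2A1)-type compatibility \eqref{m et act} combined with the conjugation formula, i.e.\ exactly (2A1) for the $P$-action on $L$ applied to $\mu(m)\in P$; here one uses that in a Hopf crossed module the induced-action compatibility $(p_1\act q)\act(p_2\act l) = p\act(q\act l)$ holds for $q\in P$, which is just associativity \eqref{m et act} after using cocommutativity. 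The second line of (2A1) is identical with $\nu$ in place of $\mu$. Axiom (2A2), that $h$ is $P$-linear, is literally (CS3). Axioms (2A3) and (2A4) are literally (CS5)... no: (2A3) is the normalization $h(1_M\ox n)=\epsilon(n)1_L$, $h(m\ox1_N)=\epsilon(m)1_L$, which I would derive by applying (CS6) with $n'=1_N$ (resp.\ $m'=1_M$) and using \eqref{act et 1} and counitality, or more directly it follows from (CS4): $\lambda'h(m\ox 1_N) = (\mu(m)\act 1_N)_1 S((\cdots)_2)$... cleaner: (CS6) with $n=n'=1_N$ gives $h(m\ox 1_N) = h(m_1\ox 1_N)(\nu(1_N)\act h(m_2\ox 1_N)) = h(m_1\ox 1_N)\epsilon(m_2\text{-part})\cdots$, forcing idempotency in a coalgebra-grouplike sense; a more economical route is to note $h(m\ox 1_N)$ is grouplike-like and use the first formula of (CS5) or simply invoke that the group/Lie case argument internalizes. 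Axiom (2A4) is verbatim (CS6) once the induced $N$-action $n\act(-) = \nu(n)\act(-)$ is substituted (and symmetrically for $M$). Axiom (2A5), $({m_1}\act({n_1}\act l))h(m_2\ox n_2) = h(m_1\ox n_1)({n_2}\act({m_2}\act l))$, becomes after substitution $(\mu(m_1)\act(\nu(n_1)\act l))h(m_2\ox n_2) = h(m_1\ox n_1)(\nu(n_2)\act(\mu(m_2)\act l))$, and I would derive this from (CS5) together with (CM2) for the crossed modules $(P,L,\kappa)$, $(P,M,\mu)$, $(P,N,\nu)$: indeed (CS5) expresses $h(\lambda(l)\ox n)$ and $h(m\ox\lambda'(l))$ as Peiffer-type conjugates, and combining the two expressions in (CS5) with the crossed-module relations gives the ``compatibility of the two actions on $l$'' identity (2A5). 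This is the only axiom whose verification is not a one-line substitution.

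The hard part will be exactly (2A5): it is the internal analogue of the classical ``$h$-compatibility'' relation in a crossed square of groups, and turning the group-theoretic commutator manipulation into a Sweedler-notation identity using only (CS1)--(CS6) requires a careful bookkeeping of antipodes and comultiplications — this is the kind of computation the author announces will be deferred to an appendix. Everything else — the induced module structures, (2A1)--(2A4), and checking that the five module-morphism equations plus $\alpha h = \hat h(\beta\ox\gamma)$ in the definition of a morphism of Hopf crossed squares are precisely the defining conditions of a morphism of Hopf $2$-actions (note the crossed-square morphism definition lacks the two equations $\alpha(m\act l)=\beta(m)\act\alpha(l)$ and $\alpha(n\act l)=\gamma(n)\act\alpha(l)$, but these follow automatically from $\alpha(p\act l)=\delta(p)\act\alpha(l)$, the commutativity of the cube face $\hat\mu\beta=\delta\mu$, and the definition of the induced actions) — is routine. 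I would conclude that the assignment on objects and morphisms is functorial and injective on morphisms, hence exhibits $\sf X^2(\Hopf_{K,coc})$ as a subcategory of $\sf Act^2(\Hopf_{K,coc})$.
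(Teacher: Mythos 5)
Your overall strategy is exactly the paper's: induce the $M$- and $N$-module structures on $L$ by $m\act l:=\mu(m)\act l$ and $n\act l:=\nu(n)\act l$, observe that (2A2) is (CS3) and (2A4) is (CS6) verbatim, get (2A1) from the crossed-module condition (CM1) for $\mu$, $\nu$ together with associativity of the $P$-action (your citation of (CS2) here is a slip --- the $P$-linearity of $\mu$ you need is really (CM1) for the crossed module $(P,M,\mu)$, i.e.\ $\mu(p\act m)=p_1\mu(m)S(p_2)$, not the $P$-linearity of $\lambda,\lambda'$), and get (2A3) from (CS5) applied to $1_M=\lambda(1_L)$ and $1_N=\lambda'(1_L)$ --- the route you half-abandon is in fact the one the paper takes, and is a one-liner: $h(m\ox 1_N)=h(m\ox\lambda'(1_L))=(\mu(m)\act 1_L)S(1_L)=\epsilon(m)1_L$. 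Your treatment of morphisms (deriving $\alpha(m\act l)=\beta(m)\act\alpha(l)$ from $\alpha(p\act l)=\delta(p)\act\alpha(l)$ and the commuting cube face $\hat\mu\beta=\delta\mu$) is correct and matches what the paper leaves implicit.

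The genuine gap is (2A5), which you correctly identify as the only nontrivial axiom but for which you supply neither the mechanism nor the full list of ingredients. Saying it follows from ``(CS5) together with (CM2)'' is not enough: (CS4) is indispensable, because the whole point is to compute $\kappa(h(m'\ox n'))$, and the only access to $\lambda h(m'\ox n')$ is through (CS4). The paper's actual argument evaluates the single element $h\bigl(\lambda(h(m'\ox n'))\ox\lambda'(l')\bigr)$ in two different ways --- once via the first half of (CS5) followed by (CS4) and (CM1), yielding $\bigl(\mu(m'_1)\nu(n'_1)\mu(S(m'_2))S(\nu(n'_2))\act l'_1\bigr)S(l'_2)$, and once via the second half of (CS5) followed by (CM2), yielding $h(m'_1\ox n'_1)l'_1S(h(m'_2\ox n'_2))S(l'_2)$ --- then tensors with the remaining legs, multiplies the components to cancel the antipode factors, and finally specializes $l'=\nu(n_2)\mu(m_2)\act l$ to recover (2A5). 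Without this double-evaluation trick (or an equivalent one, e.g.\ applying (CM2) for $\kappa$ directly to the element $h(m\ox n)$ and then rewriting $\kappa(h(m\ox n))$ via (CS4) and (CM1)), the verification does not go through, so as written your proof of the proposition is incomplete at precisely the step that carries the content.
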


\begin{proof}
Let  us consider a Hopf crossed square $(L,M,N,P,h, \lambda, \lambda', \mu, \nu)$, so that by the definition of a Hopf crossed square, $L$, $M$, $N$ are 3 $P$-module Hopf algebras. Thanks to the morphisms $\mu$ and $\nu$, we can define the linear map \[M \otimes L \to L \colon m \otimes l \mapsto {\mu(m)}  \act l,\] which turns $L$ into an $M$-module Hopf algebra and the linear map \[N \otimes L \to L \colon n \otimes l \mapsto {\nu(n)}  \triangleright l,\] which turns $L$ into an $N$-module Hopf algebra.
Moreover, they satisfy the 5 axioms of the definition of Hopf $2$-action (The verifications are done in Appendix \ref{A_1}).

Furthermore, it is easy to check that morphisms of Hopf crossed modules are also morphisms of Hopf $2$-actions.
\end{proof}
We notice that the subcategory $\sf X^2(\Hopf_{K,coc})$ is not a full subcategory of $\sf Act^2(\Hopf_{K,coc})$. Indeed, even in the case of $\sf{Grp}$, the category of crossed modules of groups $\sf{Xmod}$ (recalled later in Definition \ref{xmod grp}) is not a full subcategory of the category of actions of groups $\sf{Act}$, where an action of groups is a function $\act\colon G \times X \to X$ such that $(gg' \act x) = (g \act (g' \act x))$, $1_G \act x = x $ and $(g \act xx') = (g \act x)(g \act x')$ for any $g,g' \in G$, $x,x' \in X$. For example, let take ($\mathbb{Z}$, $\mathbb{Z}$, $d\colon \mathbb{Z} \to \mathbb{Z} \colon z \mapsto 2z$) and ($\mathbb{Z}$, $\mathbb{Z}$, $Id$) two crossed modules where the actions are the trivial ones. Then $(Id \colon \mathbb{Z} \to \mathbb{Z},Id \colon \mathbb{Z} \to \mathbb{Z})$ is a morphism of actions which is not a morphism of crossed modules. Another way of seeing this is by using the equivalence of categories between the actions and the split epimorphisms and the equivalence of categories between the crossed modules and the reflexive multiplicative graphs. Indeed, it is well-known that the category of reflexive graphs is a subcategory which is not full of the category of split epimorphisms.

\begin{remark}\label{Remark}
Let us make some observations about the Hopf crossed squares.
\begin{itemize}
\item[(1)]
Let $(L,M,N,P,\lambda, \lambda', \mu, \nu)$ be a Hopf crossed square, then $(M,L,\lambda)$ (where $M$ acts on $L$ as $M \otimes L \to L \colon m \otimes l \mapsto {\mu(m)}  \triangleright l $) and $(N,L,\lambda')$ (where $N$ acts on $L$ as $N \otimes L \to L \colon n \otimes l \mapsto {\nu(n)} \triangleright l)$ are crossed modules of Hopf algebras. We use  that $\lambda$ is $P$-linear, and that $\kappa = \mu \cdot \lambda$ and $\mu$ are crossed modules to prove the conditions (CM1) and (CM2) for $(M,L,\lambda)$.
\[\lambda({\mu(m)} \triangleright l) = {\mu(m)}  \triangleright \lambda(l) = m_1 \lambda(l) S(m_2),\]
\[({\mu \cdot \lambda (l)})  \triangleright l' = {\kappa(l)}  \triangleright l' = l_1l'S(l_2).\]
Same computations work for $(N,L,\lambda')$.
\item[(2)] $(\lambda, Id_P)$ is a morphism of crossed modules between $(P,L,\kappa)$ and $(P,M,\mu)$ \begin{center}
\begin{tikzpicture}[descr/.style={fill=white},scale=0.9, baseline = (B.base)]
\node (A) at (0,0) {$P$};
\node (B) at (0,2) {$L$};
\node (C) at (2,2) {$M$};
\node (D) at (2,0) {$P$};
  \path[-stealth]
 (B.south) edge node[right] {$\kappa$} (A.north) 
 (C.south) edge node[right] {$\mu$} (D.north)
(A.east) edge node[above] {$Id$} (D.west)
(B.east) edge node[above] {$\lambda$} (C.west);
\end{tikzpicture}
\end{center}
 since $\lambda$ is $P$-linear.

$( Id_L, \mu)$ is a morphism of crossed modules between $(M,L,\lambda)$ and $(P,L,\kappa)$
\begin{center}
\begin{tikzpicture}[descr/.style={fill=white},scale=0.9, baseline = (B.base)]
\node (A) at (0,0) {$M$};
\node (B) at (0,2) {$L$};
\node (C) at (2,2) {$L$};
\node (D) at (2,0) {$P$};
  \path[-stealth]
 (B.south) edge node[right] {$\lambda$} (A.north) 
 (C.south) edge node[right] {$\kappa$} (D.north)
(A.east) edge node[above] {$\mu$} (D.west)
(B.east) edge node[above] {$Id$} (C.west);
\end{tikzpicture}
\end{center}
by the construction of L as an $M$-module.

Similarly, we can prove that $(\lambda', Id_P)$ and $(Id_L , \nu)$ are morphisms of crossed modules. 
\item[(3)]In $\mathsf{Hopf_{K,coc}}$, if $(L,M,N,P,h, \lambda, \lambda', \mu, \nu)$ is a Hopf crossed square,
\begin{center}
\begin{tikzpicture}[descr/.style={fill=white},scale=0.9]
\node (A) at (0,0) {$N$};
\node (B) at (0,2) {$L$};
\node (C) at (2,2) {$M$};
\node (D) at (2,0) {$P,$};
  \path[-stealth]
 (B.south) edge node[right] {$\lambda'$} (A.north) 
 (C.south) edge node[right] {$\mu$} (D.north)
(A.east) edge node[above] {$\nu$} (D.west)
(B.east) edge node[above] {$\lambda$} (C.west);
\end{tikzpicture}
\end{center}
then $(L,N,M,P,S \cdot h \cdot \sigma, \lambda', \lambda, \nu, \mu)$, where $\sigma(x\ox y) = y \ox x$, is also a Hopf crossed square 
\begin{center}
\begin{tikzpicture}[descr/.style={fill=white},scale=0.9]
\node (A) at (0,0) {$M$};
\node (B) at (0,2) {$L$};
\node (C) at (2,2) {$N$};
\node (D) at (2,0) {$P.$};
  \path[-stealth]
 (B.south) edge node[right] {$\lambda$} (A.north) 
 (C.south) edge node[right] {$\nu$} (D.north)
(A.east) edge node[above] {$\mu$} (D.west)
(B.east) edge node[above] {$\lambda'$} (C.west);
\end{tikzpicture}
\end{center}
Indeed, we can easily check that all the conditions are satisfied, for example, we obtain (CS4) for any $n \in N$ and $m \in M$,
as follows
\begin{align*}
\lambda' \cdot S \cdot h \cdot \sigma (n \otimes m) &= S(\lambda'  \cdot h(m \otimes n))\\
&\stackrel{\mathclap{(CS4)}}{=} \; S (({\mu(m)}  \triangleright n_1) S(n_2)) \\
&\stackrel{\mathclap{\eqref{coco}}}{=}  \; n_1 ({\mu(m)}  \triangleright S(n_2))
\end{align*} 
  
\end{itemize}
\end{remark}
We illustrate the notion of Hopf crossed square with some examples: 
\begin{examples}\label{ex D}
\begin{itemize}
\item[(1)]Let $N$ and $M$ be two normal Hopf subalgebras of $P$ a cocommutative Hopf algebra,  we construct the following square of inclusions
\begin{center}
\begin{tikzpicture}[descr/.style={fill=white},scale=0.9]
\node (A) at (0,0) {$N$};
\node (B) at (0,2) {$N \cap M$};
\node (C) at (2,2) {$M$};
\node (D) at (2,0) {$P$};
  \path[-stealth]
 (B.south) edge node[right] {$ $} (A.north) 
 (C.south) edge node[right] {$  $} (D.north)
(A.east) edge node[above] {$ $} (D.west)
(B.east) edge node[above] {$  $} (C.west);
\end{tikzpicture}
\end{center}
with $h \colon M \otimes N \to N \cap M \colon m \ox n \mapsto m_1n_1S(m_2)S(n_2)$, where the modules are given by conjugation. This construction is a Hopf crossed square. In particular the unit square 
\begin{center}
\begin{tikzpicture}[descr/.style={fill=white},scale=0.9]
\node (A) at (0,0) {$A$};
\node (B) at (0,2) {$A$};
\node (C) at (2,2) {$A$};
\node (D) at (2,0) {$A$};
\draw[commutative diagrams/.cd, ,font=\scriptsize]
(B.south) edge[commutative diagrams/equal] (A.north)
(C.south) edge[commutative diagrams/equal] (D.north)
(A.east) edge[commutative diagrams/equal] (D.west)
(B.east)  edge[commutative diagrams/equal]  (C.west)
;
\end{tikzpicture}
\end{center}
is then a Hopf crossed square with $A \ox A \to A \colon a \ox a' \mapsto a_1a'S(a_2)$ and $h(a \ox a') = a_1a'_1S(a_2)S(a'_2)$.
Note that $A = A$ is the normalization of the largest equivalence relation on $A$. 
\item[(2)] Any commutative diagram of commutative (and cocommutative) Hopf algebras,
\begin{center}
\begin{tikzpicture}[descr/.style={fill=white},scale=0.9]
\node (A) at (0,0) {$D$};
\node (B) at (0,2) {$A$};
\node (C) at (2,2) {$B$};
\node (D) at (2,0) {$C$};
  \path[-stealth]
 (B.south) edge node[right] {$ g$} (A.north) 
 (C.south) edge node[right] {$ k$} (D.north)
(A.east) edge node[above] {$ l$} (D.west)
(B.east) edge node[above] {$ f$} (C.west);
\end{tikzpicture}
\end{center} can be seen as a Hopf crossed square where the modules are the trivial ones (i.e.\ $C \otimes B \to B \colon c \otimes b \mapsto b\epsilon(c)$), and the coalgebra map $h \colon B  \otimes D \to A$ is chosen as $h(b \otimes d) = \epsilon(b)\epsilon(d)1_A$.
Note that this example follows from the fact the category of commutative and cocommutative Hopf algebras is abelian (see corollary \ref{ab}).

 \item[(3)] Thanks to Theorem \ref{CGKMM}, we know that we can re-express any cocommutative Hopf algebra over an algebraically closed field of characteristic zero as 
\[ X \cong U(L_X) \rtimes K[G_X].\]
Let $B$ and $X$ be two commutative and cocommutative Hopf algebras over an algebraically closed field of characteristic zero, $d \colon X \to B$ be a crossed module, then the following square  
\begin{center}
\begin{tikzpicture}[descr/.style={fill=white},scale=0.9]
\node (A) at (0,0) {$K[G_X] $};
\node (B) at (0,2) {$U(L_X)$};
\node (C) at (4,2) {$U(L_B)$};
\node (D) at (4,0) {$K[G_B] $};
  \path[-stealth]
 (B.south) edge node[right] {$ \eta_{K[G_X]} \cdot \epsilon$} (A.north) 
 (C.south) edge node[right] {$ \eta_{K[G_B]} \cdot \epsilon$} (D.north)
(A.east) edge node[above] {$ d$} (D.west)
(B.east) edge node[above] {$ d$} (C.west);
\end{tikzpicture}
\end{center}
is a Hopf crossed square where $h \colon  U(L_B)  \otimes K[G_X]  \to U(L_X) $ is the trivial map ($h \colon  U(L_B)  \otimes K[G_X]  \to U(L_X) \colon k \ox g \mapsto 0$). The $K[G_B]$-actions on $U(L_X)$ and $K(G_X)$ are induced by the $B$-module structure of $X$, and the  $K[G_B]$-action on $U(L_B)$ is given by the linear map $ K[G_B] \ox U(L_B) \to U(L_B)$ defined by $g \act x =x$ for $g \in K[G_B]$, $x \in U(L_B)$.
Note that the vertical arrows correspond to the normalization of a totally disconnected groupoids, i.e a groupoid with $\delta = \gamma$ (see \cite{BB} for details). 
\item[(4)] Another example of Hopf crossed square is given by the following square

 \begin{center}
\begin{tikzpicture}[descr/.style={fill=white},scale=0.9]
\node (A) at (0,0) {$X$};
\node (B) at (0,2) {$X$};
\node (C) at (2,2) {$X$};
\node (D) at (2,0) {$B,$};
\draw[commutative diagrams/.cd, ,font=\scriptsize]
(B.south) edge[commutative diagrams/equal] (A.north)
(B.east)  edge[commutative diagrams/equal]  (C.west)
;
  \path[-stealth]
(C.south) edge node[right] {$ d$} (D.north)
(A.east) edge node[above] {$ d$} (D.west);
\end{tikzpicture}
\end{center}
where $d  \colon X \to B$ is a Hopf crossed module and $h \colon X \ox X \to X$ is given by  $h( x \ox y) = x_1y_1S(x_2)S(y_2)$.
\item[(5)] Let $d \colon X \to B$ be a crossed module, then the commutative square
\begin{center}
\begin{tikzpicture}[descr/.style={fill=white},scale=0.9]
\node (A) at (0,0) {$X$};
\node (B) at (0,2) {$K$};
\node (C) at (2,2) {$K$};
\node (D) at (2,0) {$B$};
\draw[commutative diagrams/.cd, ,font=\scriptsize]
(B.east)  edge[commutative diagrams/equal]  (C.west)
;
  \path[-stealth]
 (B.south) edge node[right] {$ \eta_X$} (A.north) 
 (C.south) edge node[right] {$\eta_B $} (D.north)
(A.east) edge node[above] {$ d $} (D.west);
\end{tikzpicture}
\end{center}
is a Hopf crossed square with $h := Id_K \ox \epsilon\colon K \ox  X \to K$ (this definition of $h$ is the only one possible since $K$ is the terminal object). This construction induces the discrete functor $\sf D' \colon HXMod_{K,coc} \to X^2(Hopf_{K,coc})$, which is for crossed squares what the discrete functor $\mathsf{ D \colon Hopf_{K,coc} \to HXMod_{K,coc}} \colon H \mapsto (H,K,\eta_H)$ is for crossed modules.

\end{itemize}

\end{examples}

An important property that we want for our definition of Hopf crossed squares is to recover the definitions of crossed squares of groups and of Lie algebras by restricting a Hopf crossed square to the group-like elements and primitive elements, respectively. In order to verify this, we recall the definition of crossed modules and crossed squares of groups and of Lie algebras. 
\begin{definition}\label{xmod grp}\cite{Whitehead}
A \emph{crossed module of groups} is given by a morphism of groups $d \colon X \to B$ endowed with an action of groups of $B$ on $X$ such that for any $b$ in $B$ and any $x$, $y$ in $X$,
\begin{equation}
d(b \act x) = bd(x)b^{-1},
\end{equation}
\begin{equation}
d(x) \act y = xyx^{-1}.
\end{equation}
\end{definition}
The definition of crossed squares of groups given below is equivalent to the one introduced by Loday in \cite{Loday}. Note that the observations (1) and (2) in Remark \ref{Remark}  are the analogues for cocommutative Hopf algebras, of the (redundant) conditions of the original definition of crossed squares of groups given by Loday.
\begin{definition}\cite{Loday}\label{x2mod group}
A \emph{crossed square of groups} is a commutative square of groups
\begin{center}
\begin{tikzpicture}[descr/.style={fill=white},scale=0.9]
\node (A) at (0,0) {$M'$};
\node (B) at (0,2) {$L$};
\node (C) at (2,2) {$M$};
\node (D) at (2,0) {$P,$};
  \path[-stealth]
 (B.south) edge node[right] {$ \lambda'$} (A.north) 
 (C.south) edge node[right] {$ \mu$} (D.north)
(A.east) edge node[above] {$ \mu'$} (D.west)
(B.east) edge node[above] {$ \lambda$} (C.west);
\end{tikzpicture}
\end{center}
together with an action of $P$ on $L$, $M$ and $M'$, and with a function $h \colon M\times M' \to L$ satisfying the
following axioms
\begin{itemize}
\item[(i)]  the homomorphisms $ \mu, \mu'$ and $\kappa = \mu \cdot \lambda = \mu' \cdot \lambda'$ are crossed modules.
\item[(ii)] $\lambda \cdot h (m ,n ) = m ({\mu'(n)}  \triangleright m^{-1})$,  \\ $\lambda' \cdot h (m , n ) = ({\mu(m)}  \triangleright n)n^{-1}$,
\item[(iii)]$h( \lambda(l) , n) = l({\mu'(n)}  \triangleright l^{-1})$, \\$h (m , \lambda'(l)) = ({\mu(m)}  \triangleright l)l^{-1}$,
\item[(iv)]$h(m , nn') = h(m , n)({\mu'(n)} \triangleright h(m , n'))$, \\ $h(mm' , n) = ({\mu(m)}  \triangleright h(m' , n))h(m , n)$,
\item[(v)] $^ph(m , n) = h(\;^pm , \;^pn)$,
\end{itemize}
for all $m,m'$ in $M$, $n,n'$ in $M'$, $l$ in $L$ and $p$ in $P$.
\end{definition}

\begin{definition}\cite{LR}\label{xmod Lie}
A \emph{crossed module of Lie algebras} is given by a morphism of Lie algebras $d \colon \mathfrak{a} \to \mathfrak{b}$ endowed with an action of Lie algebras of $\mathfrak{b}$ on $ \mathfrak{a}$ such that for any $b$ in $ \mathfrak{b}$ and any $x$, $y$ in $ \mathfrak{a}$,
\begin{equation}
d(b \act x) = [b,d(x)],
\end{equation}
\begin{equation}
d(x) \act y = [x,y].
\end{equation}
\end{definition}

The notion of crossed square of Lie algebras was introduced by Ellis in his thesis \cite{Ellis phd}. The definition below is an equivalent definition given later in \cite{CL}. 
\begin{definition}\label{x2mod lie}
A \emph{crossed square of Lie $K$-algebras} is a commutative square of Lie $K$-algebras
\begin{center}
\begin{tikzpicture}[descr/.style={fill=white},scale=0.9]
\node (A) at (0,0) {$\mathfrak{n}$};
\node (B) at (0,2) {$\mathfrak{l}$};
\node (C) at (2,2) {$\mathfrak{m}$};
\node (D) at (2,0) {$\mathfrak{p},$};
  \path[-stealth]
 (B.south) edge node[right] {$ \lambda'$} (A.north) 
 (C.south) edge node[right] {$ \mu$} (D.north)
(A.east) edge node[above] {$ \nu$} (D.west)
(B.east) edge node[above] {$ \lambda$} (C.west);
\end{tikzpicture}
\end{center}
together with an action of $\mathfrak{p}$  on $\mathfrak{l}$, $\mathfrak{m}$ and $\mathfrak{n}$ and with a function $h \colon \mathfrak{m} \times \mathfrak{n} \to \mathfrak{l}$ satisfying the
following axioms\begin{itemize}
\item[(i)] $\lambda$ and $\lambda'$ preserve the action of $\mathfrak{p}$,
\item[(ii)]  the homomorphisms $ \mu, \mu'$ and $\kappa = \mu \cdot \lambda = \mu' \cdot \lambda'$ are crossed modules,
\item[(iii)] $\alpha h(m,n) = h(\alpha m,n) = h(m,\alpha n)$,
\item[(iv)] $h(m+m',n) = h(m,n)+h(m',n)$,\\
$h(m,n+n')=h(m,n)+h(m,n')$,
\item[(v)] $h([m,m'] , n) = ({\mu(m)}  \triangleright h(m' , n)) - ({\mu(m')}  \triangleright h(m , n))$,\\$h(m , [n,n']) = ({\nu(n)} \triangleright h(m , n'))-({\nu(n')} \triangleright h(m , n))$, 
\item[(vi)] $^ph(m , n) = h(\;^pm , n) + h(m,\;^pn)$,
\item[(vii)] $\lambda \cdot h (m ,n ) =-({\nu(n)}  \triangleright m)$,  \\ $\lambda' \cdot h (m , n ) = ({\mu(m)}  \triangleright n)$,
\item[(viii)]$h( \lambda(l) , n) = -({\nu(n)}  \triangleright l)$, \\$h (m , \lambda'(l)) = ({\mu(m)}  \triangleright l)$,
\end{itemize}
for all $m,m'$ in $\mathfrak{m}$, $n,n'$ in $\mathfrak{n}$, $l$ in $\mathfrak{l}$, $p$ in $\mathfrak{p}$ and $\alpha \in K$.
\end{definition}

The morphisms of these four algebraic structures are defined as expected, and they respectively form the category of crossed modules of groups, $\sf XMod$, the category of crossed square of groups, $\sf X^2Mod$, the category of crossed modules of Lie algebras, $\sf XLieAlg_K$, and the category of crossed squares of Lie algebras, $\sf{X^2LieAlg}_K$. In the following result, we exhibit a lifting of the adjunctions \eqref{adj} to two different levels.

\begin{proposition}\label{double adjunction}
The adjunctions are lifted to the respective categories of crossed modules and crossed squares 
\begin{center}
\begin{tikzpicture}[descr/.style={fill=white},scale=1.2]
\node at (-1.5,0) {$\perp$};
\node at (1.6,0) {$\perp$};
\node (A) at (0,0) {$\sf{Hopf}_{K,coc}$};
\node (B) at (3,0) {$\sf{LieAlg}_K,$};
\node (C) at (-2.5,0) {$\sf{Grp}$};
\node at (-1.5,1.5) {$\perp$};
\node at (1.6,1.5) {$\perp$};
\node (A') at (0,1.5) {$\sf{HXMod}_{K,coc}$};
\node (B') at (3,1.5) {$\sf{XLieAlg}_K$};
\node (C') at (-2.5,1.5) {$\sf{XMod}$};
\node at (-1.5,3) {$\perp$};
\node at (1.6,3) {$\perp$};
\node (A'') at (0,3) {$\sf{X^2(Hopf_{K,coc})}$};
\node (B'') at (3,3) {$\sf{X^2LieAlg}_K$};
\node (C'') at (-2.5,3) {$\mathsf{X^2Mod}$};
 \path[->,font=\scriptsize]
 (A'.north)  edge node[descr] {$\sf{D'}$} (A''.south)
 (B'.north) edge node[descr] {$\sf{D'}$} (B''.south) 
 (C'.north) edge node[descr] {$\sf{D'}$} (C''.south) 
 ([yshift=5pt]C''.east) edge node[above] {$\mathsf{K}[-]$} ([yshift=5pt]A''.west)
 ([yshift=-5pt]A''.west) edge node[below] {$\mathcal{G}$}([yshift=-5pt]C''.east) 
 ([yshift=5pt]B''.west) edge node[above] {$\mathsf{U}$}([yshift=5pt]A''.east) 
([yshift=-5pt]A''.east) edge node[below] {$\mathcal{P}$} ([yshift=-5pt]B''.west)
  (A.north) edge node[descr] {$\sf{D}$} (A'.south) 
(B.north) edge node[descr] {$\sf{D}$}  (B'.south) 
(C.north) edge node[descr] {$\sf{D}$} (C'.south) 
 ([yshift=5pt]C'.east) edge node[above] {$\mathsf{K}[-]$} ([yshift=5pt]A'.west)
 ([yshift=-5pt]A'.west) edge node[below] {$\mathcal{G}$}([yshift=-5pt]C'.east) 
 ([yshift=5pt]B'.west) edge node[above] {$\mathsf{U}$}([yshift=5pt]A'.east) 
([yshift=-5pt]A'.east) edge node[below] {$\mathcal{P}$} ([yshift=-5pt]B'.west)
([yshift=5pt]C.east) edge node[above] {$\mathsf{K}[-]$} ([yshift=5pt]A.west)
 ([yshift=-5pt]A.west) edge node[below] {$\mathcal{G}$}([yshift=-5pt]C.east) 
 ([yshift=5pt]B.west) edge node[above] {$\mathsf{U}$}([yshift=5pt]A.east) 
([yshift=-5pt]A.east) edge node[below] {$\mathcal{P}$} ([yshift=-5pt]B.west);
\end{tikzpicture}
\end{center}
where $K$ is the ground field and $\sf D$ and $\sf D'$ are the discrete functors. 
The explicit descriptions of $\sf D$ and $\sf D'$ are given in Examples \ref{ex D} (5).

\end{proposition}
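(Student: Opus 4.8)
The plan is to lift the two adjunctions \eqref{adj} one dimension at a time — first to the categories of crossed modules, then to the categories of crossed squares — and finally to observe that the resulting diagram is compatible with the discrete functors essentially by inspection.

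\emph{Step 1: the crossed-module level.} For the right adjoints I would use that $\mathcal G$ and $\mathcal P$, being the functors of group-like and of primitive elements, commute with tensor products ($G_{C\ox D}=G_C\ox G_D$ and $L_{C\ox D}=L_C\ox 1_D + 1_C\ox L_D$ for cocommutative $C$, $D$) and with kernels, so that a Hopf crossed module $(B,X,d)$ is carried to the triple $(\mathcal G B,\mathcal G X,\mathcal G d)$, respectively $(\mathcal P B,\mathcal P X,\mathcal P d)$: the $B$-action restricts because $b\act x$ is again group-like, respectively primitive, whenever $b$ and $x$ are, and then (CM1)--(CM2) specialise to the group, respectively Lie, crossed module identities once one substitutes group-like, respectively primitive, arguments and uses $S(g)=g^{-1}$, respectively $S(x)=-x$. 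For the left adjoints I would apply $\mathsf K[-]$, respectively $\mathsf U$, componentwise, relying on the standard identifications $\mathsf K[X\rtimes B]\cong \mathsf K[X]\rtimes \mathsf K[B]$ and $\mathsf U(\mathfrak a\rtimes\mathfrak b)\cong \mathsf U(\mathfrak a)\rtimes\mathsf U(\mathfrak b)$ of smash products, so that the transported action makes $\mathsf K[X]$ a $\mathsf K[B]$-module Hopf algebra and (CM1)--(CM2) hold because they hold on the generating elements. To obtain the adjunctions I would check that the units and counits of the base adjunctions are morphisms in $\mathsf{HXMod_{K,coc}}$, $\mathsf{XMod}$ and $\sf{XLieAlg}_K$: the unit $G\to \mathcal G\mathsf K[G]$ is an isomorphism, and the counit $\mathsf K[\mathcal G H]\to H$ commutes with $\delta$, $\gamma$, $\iota$ by naturality and with the multiplication $m$ of a reflexive--multiplicative graph because $\mathsf K[-]$ preserves the pullback $A_1\times_{A_0}A_1$ (which, in the semidirect-product model of Proposition~\ref{equi xmod 1}, is assembled from finite products and semidirect products); the triangle identities are then inherited from the base, giving $\mathsf K[-]_1\dashv \mathcal G_1$ and, identically, $\mathsf U_1\dashv\mathcal P_1$.

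\emph{Step 2: the crossed-square level.} The only genuinely new ingredient is the coalgebra map $h\colon M\ox N\to L$. For $\mathcal G$ and $\mathcal P$ I would show that $h(m\ox n)$ is group-like, respectively primitive, whenever $m$ and $n$ are: this follows from $h$ being a coalgebra morphism together with the normalisation axiom $(2A3)$, which gives $h(1_M\ox 1_N)=1_L$ and, in the primitive case, kills the mixed terms of the coproduct because $\epsilon$ vanishes on primitives; then $(CS3)$--$(CS6)$ restrict to the $h$-axioms of Definition~\ref{x2mod group}, respectively Definition~\ref{x2mod lie}. Conversely, I would extend a group-theoretic $h$ to a coalgebra map $\mathsf K[M]\ox \mathsf K[N]\to \mathsf K[L]$ uniquely, via $\mathsf K[M]\ox\mathsf K[N]\cong \mathsf K[M\times N]$ and the fact that a coalgebra morphism out of a group Hopf algebra is the same thing as a set map on group-like elements, and extend a Lie-theoretic $h$ to $\mathsf U(\mathfrak m)\ox\mathsf U(\mathfrak n)\to\mathsf U(\mathfrak l)$ by using the cocycle-type relations $(CS6)$ to define it inductively along the PBW filtration, then verifying it is well defined and comultiplicative. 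With the $h$-component in place, the remaining verifications copy those of Step 1 — now with $\mathsf{HXMod_{K,coc}}$, $\mathsf{XMod}$, $\sf{XLieAlg}_K$ as base categories and $\mathsf K[-]_1\dashv\mathcal G_1$, $\mathsf U_1\dashv\mathcal P_1$ as base adjunctions — so the crossed-square adjunctions follow from the crossed-module adjunctions using the identification $\sf X^2(\Hopf_{K,coc})\cong \mathsf{XMod}(\mathsf{HXMod_{K,coc}})$ (and its analogues for groups and Lie algebras) proved in the next section.

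\emph{Step 3 and the main obstacle.} Finally, since $\mathsf K[K]=K$, $\mathsf U(0)=K$, $\mathcal G(K)$ is the trivial group and $\mathcal P(K)=0$, the explicit formulas for $\sf D$ and $\sf D'$ recorded in Examples~\ref{ex D}(5) make every square of the displayed diagram involving $\sf D$ or $\sf D'$ commute up to the canonical identifications, so the three levels of adjunctions are compatible with the discrete functors. The hard part is the $h$-component at the crossed-square level: showing that $h$ restricts to the group-like and primitive parts rests on the normalisation axiom $(2A3)$, and recovering from a merely bilinear Lie-theoretic $h$ an honest coalgebra morphism of universal enveloping algebras still compatible with $(CS4)$--$(CS6)$ is the one point demanding a genuine, if routine, inductive computation — precisely the kind of verification the paper defers to the appendices.
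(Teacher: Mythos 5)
Your proposal is correct and follows essentially the same route as the paper: restrict to group-like/primitive elements for $\mathcal G$ and $\mathcal P$ (checking that the actions, the axioms (CM1)--(CM2), and the map $h$ all restrict — your observation that $h$ preserves primitives via $(2A3)$ and $\epsilon=0$ on primitives is exactly the computation the paper relegates to Appendix~\ref{E}), apply $\mathsf K[-]$ and $\mathsf U$ componentwise for the left adjoints, and lift the base adjunction (the paper phrases this via the hom-set bijection \eqref{iso adjunction} and restriction of morphisms, you via units/counits and triangle identities — equivalent formulations). The one point you flag as the genuine obstacle, extending a Lie-theoretic $h$ to a coalgebra map of enveloping algebras, is indeed the part the paper also leaves implicit, so your treatment is at least as complete as the original.
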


\begin{proof}

First, it is clear that $\sf{K[-]}$ sends a crossed module of groups to a Hopf crossed module and $\mathcal{G}$ does the opposite. We give some details of how the functor $\mathcal{P}$ sends a Hopf crossed module to a crossed module of Lie algebras. We recall that $\mathcal{P}(H) = \{x \in  H | \Delta(x) = x \ox 1_H + 1_H \ox x\}$ is a Lie algebra where  the bracket is given by $[x,y] = xy-yx$. Let $(B,X,d)$ be a Hopf crossed module, then by restricting this definition to the primitive elements, we obtain a morphism of Lie algebras, such that 
\[ d(b \triangleright x) = b_1d(x)S(b_2) = bd(x) - d(x)b = [b , d(x)]\]
\[ {d(x)}  \triangleright x' = x_1x'S(x_2) = xx' - x'x = [x , x']. \]
This is the definition of a crossed module of Lie algebras (Definition \ref{xmod Lie}). The adjunction structure is also lifted to the ``crossed structure".
 Indeed, we know that for $G$ a group and $H$ a Hopf algebra we have the isomorphism
\begin{equation}\label{iso adjunction}
\mathsf{Hom}_{\mathsf{Hopf_{K,coc}}}(\mathsf{K}[G], H) \cong \mathsf{Hom}_{\mathsf{Grp}}(G, \mathcal{G}(H))
\end{equation}
Let $(B,X,d)$ be a crossed module of groups and $(H,Y,\mu)$ a crossed module of Hopf algebras. Then by using \eqref{iso adjunction} we can prove that there is an isomorphism between $\mathsf{Hom}_{\mathsf{HXMod_{K,coc}}}(\mathsf{K}[(B,X,d)], (H,Y,\mu))$ and $\mathsf{Hom}_{\mathsf{XMod}}((B,X,d), \mathcal{G}(H,Y,\mu))$
\begin{center}
\begin{tikzpicture}[descr/.style={fill=white},scale=1]
\node (A) at (0,0) {$Y$};
\node (B) at (0,2) {$\mathsf{K}[X]$};
\node (C) at (2,2) {$\mathsf{K}[B]$};
\node (D) at (2,0) {$H$};
\node at (4,1) {$\mapsto$};
  \path[-stealth]
 (B.south) edge node[right] {$f_1$} (A.north) 
 (C.south) edge node[right] {$ f_2$} (D.north)
(A.east) edge node[above] {$ \mu$} (D.west)
(B.east) edge node[above] {$ \mathsf{K}[d]$} (C.west);
\end{tikzpicture}
\hspace{1cm}
\begin{tikzpicture}[descr/.style={fill=white},scale=1]
\node (A) at (0,0) {$\mathcal{G}(Y)$};
\node (B) at (0,2) {$X$};
\node (C) at (2,2) {$B$};
\node (D) at (2,0) {$\mathcal{G}(H)$};
  \path[-stealth]
 (B.south) edge node[right] {$\tilde{f_1}$} (A.north) 
 (C.south) edge node[right] {$ \tilde{f_2}$} (D.north)
(A.east) edge node[above] {$ \mathcal{G}(\mu)$} (D.west)
(B.east) edge node[above] {$ d$} (C.west);
\end{tikzpicture}
\end{center}
where $\tilde{f_i}$ for $i=1,2$ is the restriction of $f_i$ to the elements of $B$ and $X$ respectively. 

The adjunctions for the crossed square structures work with similar computations (Some details are explained in Appendix \ref{E}).
\end{proof}

The definition of a Hopf crossed square allows us to recover the definitions of crossed square of groups and of Lie algebras, exactly as the definition of crossed modules of cocommutative Hopf algebras generalizes the definitions of crossed modules of groups and of Lie algebras.
In the next section we verify that this definition is equivalent to the internal crossed modules in the category of Hopf crossed modules, by showing that the category $\sf{X}^2(\mathsf{Hopf}_{K,coc})$ of Hopf crossed squares and the category $\sf{cat}^2(Hopf_{K,coc})$ of cat$^2$-Hopf algebras are equivalent.

\section{Equivalence between crossed squares of Hopf algebras and cat$^2$-Hopf algebras}

In order to prove a higher dimensional version of the equivalence between ``internal structures" and ``crossed structures" in $\sf Hopf_{K,coc}$, we show as a first step that the category of Hopf $2$-actions is equivalent to the category of $2$-fold split epimorphisms of Hopf algebras. Then, by using this equivalence, we go one step further and prove the equivalence between the category $\sf{X}^2(\mathsf{Hopf}_{K,coc})$ of Hopf crossed squares and the category  $\sf{cat}^2(Hopf_{K,coc})$ of cat$^2$-Hopf algebras.

\begin{proposition}\label{equi action/split epi}
There is an equivalence of categories between the category $\sf Act^2(\mathsf{Hopf}_{K,coc})$ of Hopf $2$-actions and the category $\sf{Pt}^2(\Hopf_{K,coc})$ of $2$-fold split epimorphisms of Hopf algebras.
\end{proposition}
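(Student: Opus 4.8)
The plan is to build an explicit pair of functors between $\sf Act^2(\mathsf{Hopf}_{K,coc})$ and $\sf{Pt}^2(\Hopf_{K,coc})$ and to check that they are mutually quasi-inverse, using the one-dimensional equivalence of Lemma \ref{iso crossed product} ``twice''. Recall that a $2$-fold split epimorphism is the data $(H,N,M,s_N,s_M)$ of two Hopf subalgebras $N,M\subseteq H$ with retractions $s_N\colon H\to N$, $s_M\colon H\to M$ satisfying $s_N\cdot s_M=s_M\cdot s_N$. Starting from such an object, I would first apply the functor $\mathsf H$ of Lemma \ref{iso crossed product} to the split epimorphism $(s_M\colon H\to M)$: this produces the cocommutative $M$-module Hopf algebra $L:=\HKer(s_M)$ with action $m\triangleright l=m_1\,l\,S(m_2)$ (viewing $M\subseteq H$). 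The condition $s_N\cdot s_M=s_M\cdot s_N$ forces $s_N$ to restrict to a split epimorphism $L\to L\cap N=\HKer(s_M|_N)$; write $P:=N\cap M$ and note $s_N(L)\subseteq\HKer(s_M)\cap N$, which I claim equals $\HKer(s_M|_N)$. Setting $N':=\HKer(s_M|_N)$ — playing the role of the Hopf algebra also called $N$ in Definition \ref{2-action}, there is a mild clash of notation to reconcile — one then has the inner split epimorphism $L\to N'$ in $\sf Pt(\Hopf_{K,coc})$, and applying $\mathsf H$ to it produces an $N'$-module Hopf algebra; similarly, symmetrically, an $M'$-module structure. The object $P$ acquires actions on $L$, $M$(-component), $N$(-component) by restricting the conjugation action, and the map $h$ is obtained by transporting the canonical comparison isomorphism across the nested semi-direct product decompositions of $H$.

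The cleaner way to organise this is as follows. By Lemma \ref{iso crossed product} applied to $(H,s_M)$ we get $H\cong L\rtimes M$; then $s_N$ corresponds to a point in $\sf Pt(\Hopf_{K,coc})$ over $L\rtimes M$, and applying the equivalence a second time inside $\sf Pt(\Hopf_{K,coc})$ — which is legitimate since $\sf Pt(\Hopf_{K,coc})$ is again semi-abelian, or directly since Lemma \ref{iso crossed product} is functorial — decomposes everything into the four pieces $L,M\text{-part},N\text{-part},P$ together with the data of how $M\text{-part}$ and $N\text{-part}$ act on $L$ and how these actions fail to commute, which is precisely recorded by the coalgebra map $h\colon M\otimes N\to L$. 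The axioms (2A1)--(2A5) of Definition \ref{2-action} are then exactly the translations of: the two iterated semi-direct products agree (associativity/compatibility of actions, giving (2A1)), $h$ respects the $P$-module structures (coming from $P$-equivariance of the comparison iso, giving (2A2)--(2A3)), $h$ is multiplicative in each variable (from the bialgebra structure of $H\cong (L\rtimes N\text{-part})\rtimes(M\text{-part}\rtimes P)$ or the appropriate bracketing, giving (2A4)), and the Peiffer-type identity (2A5) from the two ways of conjugating an element of $L$ by something in $M\text{-part}$ and something in $N\text{-part}$ inside $H$.

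Conversely, from a Hopf $2$-action $(L,M,N,P,h)$ I would build the Hopf algebra $H$ as an iterated smash product: first form $L\rtimes M$ using the $M$-action on $L$, simultaneously $L\rtimes N$ using the $N$-action, and then assemble $H=(L\rtimes N)\rtimes(M\rtimes P)$ — or, more symmetrically, define on the vector space $L\otimes M\otimes N\otimes P$ a multiplication whose structure constants involve the four actions and the map $h$ (the term $h(m\otimes n)$ appearing exactly when an $m$ must be moved past an $n$). Verifying that this multiplication is associative and a bialgebra map is where axioms (2A1)--(2A5) get used, and I would then set $N\subseteq H$ to be the image of $L\rtimes N\rtimes P$ (respectively $M$ the image of $L\rtimes M\rtimes P$), with $s_N,s_M$ the evident projections killing the $M$- (resp.\ $N$-) coordinate. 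One checks $s_N\cdot s_M=s_M\cdot s_N$ holds (both send the class of $l\otimes m\otimes n\otimes p$ to something in $P$, roughly $\epsilon(l)\epsilon(m)\epsilon(n)p$), so $(H,N,M,s_N,s_M)$ is a $2$-fold split epimorphism. Functoriality on morphisms is routine from the explicit formulas, and the two round-trips are naturally isomorphic to the identity by the same argument as the one-dimensional case in Lemma \ref{iso crossed product}, applied levelwise.

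The main obstacle I expect is \textbf{bookkeeping the non-commuting correction term $h$ correctly in the associativity check for the iterated smash product}: unlike the ordinary semi-direct product, the ``outer'' factor ($M$, say) does not act on the ``inner'' factor ($N$) in a way that simply commutes past $L$, and the discrepancy is precisely $h$; making sure that the single map $h\colon M\otimes N\to L$ (together with the hypotheses that it is a coalgebra map and satisfies (2A3)--(2A5)) is exactly enough to make the multiplication on $L\otimes M\otimes N\otimes P$ associative, counital and compatible with the comultiplication — and no more is needed — is the delicate computational heart of the proof. I would isolate this as a lemma (``the $h$-twisted product is a cocommutative Hopf algebra iff (2A1)--(2A5) hold'') and relegate the Sweedler-notation verification to an appendix, as the paper's introduction promises. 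Everything else — the identification $\HKer(s_M)\cap N=\HKer(s_M|_N)$, $P$-equivariance, functoriality, and the natural isomorphisms — follows formally from Lemma \ref{iso crossed product} and the semi-abelian structure of $\Hopf_{K,coc}$.
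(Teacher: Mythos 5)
Your overall strategy is the one the paper actually follows: in the forward direction you peel off the two retractions one at a time (Lemma \ref{iso crossed product} applied twice) to decompose $H$ into the four pieces $\HKer(s_N)\cap\HKer(s_M)$, $\HKer(s_N)\cap M$, $N\cap\HKer(s_M)$, $N\cap M$ with conjugation actions and with $h$ the commutator-type map $h(n\otimes m)=n_1m_1S(n_2)S(m_2)$; in the converse direction you form the iterated smash product $(L\rtimes N)\rtimes(M\rtimes P)$, and you correctly isolate the computational heart, namely that (2A1)--(2A5) are exactly what is needed to make the $h$-twisted action $(m\otimes p)\triangleright(l\otimes n)=(m_1\triangleright(p_1\triangleright l))\,h(m_2\otimes p_2\triangleright n_1)\otimes p_3\triangleright n_2$ into a module Hopf algebra structure (this is the content of Appendix \ref{C_1}).

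There is, however, one concrete error in your converse construction. You propose to take as the two distinguished sub-Hopf-algebras of $H=(L\rtimes N)\rtimes(M\rtimes P)$ the images of $L\rtimes N\rtimes P$ and $L\rtimes M\rtimes P$, with retractions ``killing the $M$- (resp.\ $N$-) coordinate''. This is not the right choice, for two reasons. First, the map $l\otimes n\otimes m\otimes p\mapsto \epsilon(m)\,l\otimes n\otimes p$ is not an algebra morphism: the multiplication of the iterated smash product entangles the $M$-coordinate with the $L$-coordinate through the terms $m_1\triangleright(\cdots)$ and $h(m_2\otimes\cdots)$, and applying $\epsilon$ only to the surviving $M$-leg does not remove that dependence. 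Second, even formally this choice cannot be inverse to your forward construction: with those retractions one would get $\HKer(s_N)=M$, $\HKer(s_M)=N$, hence $\HKer(s_N)\cap\HKer(s_M)=K$ and $N\cap M=L\rtimes P$ on the round trip, which does not recover $(L,M,N,P,h)$. The correct subalgebras are $M\rtimes P$ and $N\rtimes P$ (embedded as $1_L\otimes 1_N\otimes m\otimes p$ and $1_L\otimes n\otimes 1_M\otimes p$), with retractions $s_1(l\otimes n\otimes m\otimes p)=\epsilon(l)\epsilon(n)(m\otimes p)$ and $s_2(l\otimes n\otimes m\otimes p)=\epsilon(l)\epsilon(m)(n\otimes p)$; these \emph{are} Hopf algebra morphisms, they commute, and their ``intersection'' is $P$. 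Your own parenthetical remark that $s_N\cdot s_M$ should land in $P$ and equal roughly $\epsilon(l)\epsilon(m)\epsilon(n)p$ is inconsistent with the retractions you wrote down and in fact points to this correct choice. With that repair, and with the explicit formula for $h$ supplied in the forward direction, your argument matches the paper's proof.
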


\begin{proof}
On the one hand, let us consider a $2$-fold split epimorphism $(H,N,M,s_N,s_M)$, 
we construct the five following linear maps given by the action by conjugation 
\begin{align*}
 (N \cap M) \otimes (N \cap HKer(s_M)) & \to N \cap HKer(s_M) \colon x \ox k \mapsto x_1kS(x_2),\\
(N \cap M) \otimes  (HKer(s_N) \cap M) & \to HKer(s_N) \cap M \colon x \ox k \mapsto x_1kS(x_2),\\
(N \cap M) \otimes (HKer(s_N)  \cap HKer(s_M)) & \to HKer(s_N) \cap HKer(s_M) \colon x \ox k \mapsto x_1kS(x_2),\\
( N \cap HKer(s_M)) \otimes (HKer(s_N)  \cap HKer(s_M)) & \to HKer(s_N) \cap HKer(s_M) \colon x \ox k \mapsto x_1kS(x_2),\\
 (HKer(s_N) \cap M) \otimes (HKer(s_N)  \cap HKer(s_M)) & \to HKer(s_N) \cap HKer(s_M) \colon x \ox k \mapsto x_1kS(x_2).
\end{align*} 
  
Thanks to these applications we can check that $N \cap HKer(s_M)$, $HKer(s_N) \cap M $ and $HKer(s_N)  \cap HKer(s_M)$ are $(N \cap M)$-module Hopf algebras. Moreover, $HKer(s_N)  \cap HKer(s_M)$ is also an $(N \cap HKer(s_M))$-module Hopf algebra and a $(HKer(s_N) \cap M)$-module Hopf algebra.

We define $h \colon (N \cap HKer(s_M)) \otimes (HKer(s_N) \cap M) \to HKer(s_N) \cap HKer(s_M)$ as \[h(n \otimes m) = n_1m_1S(n_2)S(m_2).\]
In the appendix \ref{B} we verify that this construction is a Hopf $2$-action.

On the other hand, let $(L,M,N,P,h)$ be a Hopf $2$-action.
Since $M$ is a $P$-module Hopf algebra and $L$ is an $N$-module Hopf algebra, we use the semi-direct product to construct the two following Hopf algebras: $M \rtimes P$ and $L \rtimes N$. Moreover, in Appendix \ref{C_1}, we verify in details that $L \rtimes N$ can be seen as an $(M \rtimes P)$-module Hopf algebra thanks to the following linear map
\[ {(m \otimes p)}  \triangleright (l \otimes n) = ({m_1}  \triangleright ({p_1}  \triangleright l))h(m_2 \otimes {p_2}  \triangleright n_1) \otimes {p_3}  \triangleright n_2.\]

Then, we can define the semi-direct product $(L\rtimes N)\rtimes (M \rtimes P)$, where the Hopf algebra structure is given as follows
\begin{align*}
&(l \otimes n \otimes m \otimes p) (l' \otimes n' \otimes m' \otimes p')= l \Big( {n_1} \triangleright \big({m_1} \triangleright ({p_1} \triangleright l')\big) \Big)({n_2} \triangleright h(m_2 \otimes {p_2} \triangleright n'_1)) \\ &  \hspace{2.5cm} \otimes n_3({p_3} \triangleright n'_2) \otimes m_3({p_4}\triangleright m') \otimes p_5p',\\
&1_{(L\rtimes N)\rtimes (M \rtimes P)} = 1_L \otimes 1_N \otimes 1_M \otimes 1_P,\\
&\Delta_{(L\rtimes N)\rtimes (M \rtimes P)} (l \otimes n \otimes m \otimes p) = l_1 \otimes n_1 \otimes m_1 \otimes p_1 \otimes l_2 \otimes n_2 \otimes m_2 \otimes p_2,\\
&\epsilon_{(L\rtimes N)\rtimes (M \rtimes P)}(l \otimes n \otimes m \otimes p) = \epsilon(l)\epsilon(n)\epsilon(m)\epsilon(p),\\
&S_{(L\rtimes N)\rtimes (M \rtimes P)}(l \otimes n \otimes m \otimes p) = \big({S(p_1)} \triangleright \big({S(m_1)} \triangleright ({S(n_1)} \triangleright S(l))\big)\big)\\
& \hspace{2cm} h({S(p_2)} \triangleright S(m_2) \otimes {S(p_3)} \triangleright S(n_2)) \ox {S(p_4)} \triangleright S(n_3) \otimes {S(p_5)} \triangleright S(m_3) \otimes S(p_6).
\end{align*} 

By considering the following projections, we obtain 2 split epimorphisms of Hopf algebras defined by
\begin{align*}
&s_1 \colon  (L\rtimes N)\rtimes (M \rtimes P) \to (M \rtimes P) \colon s_1(l \otimes n \otimes m \otimes p) = \epsilon(l)\epsilon(n) (m \otimes p), \\
&s_2 \colon  (L\rtimes N)\rtimes (M \rtimes P) \to(N \rtimes P) \colon s_2(l \otimes n \otimes m \otimes p) = \epsilon(l)\epsilon(m) (n \otimes p),
\end{align*} 
 
where the splitting is given by the morphisms $e_1(m \ox p)= 1_L \ox 1_N \ox m \ox p$ and $e_2(n \ox p)= 1_L \ox n \ox 1_M \ox p$. Moreover, it is easy to check that the compatibility condition is satisfied. Hence, the $5$-tuple $((L \rtimes N)\rtimes (M \rtimes P), M \rtimes P, N \rtimes P, s_1,s_2)$ is a $2$-fold split epimorphism.

This construction can be extended to the morphisms (Appendix \ref{C_3}). We obtain two functors $\tilde{F} \colon  \sf{Pt^2(\mathsf{Hopf}_{K,coc})} \to  \sf{Act^2(\mathsf{Hopf}_{K,coc})}$  and $\tilde{G} \colon \sf{Act^2(\mathsf{Hopf}_{K,coc})} \to \sf{Pt}^2(\Hopf_{K,coc})$, giving us an equivalence of categories which is explained in details in Appendix \ref{C_4}.
\end{proof}


Now we go one step further, and we prove that we have an equivalence between Hopf crossed squares and cat$^2$-Hopf algebras. Since $ \sf{X^2(\mathsf{Hopf}_{K,coc})}$ and $ \sf{cat^2(\mathsf{Hopf}_{K,coc})}$ are subcategories of the categories $ \sf{Act^2(\mathsf{Hopf}_{K,coc})}$ and  $ \sf{Pt^2(\mathsf{Hopf}_{K,coc})} $, we build the new functors by using the functors $\tilde{F}$ and $\tilde{G}$ defined in the proof of Proposition \ref{equi action/split epi}.

\begin{theorem}\label{equi crossed squate 2-cat}
There is an equivalence of categories between $\sf{X^2(Hopf_{K,coc})}$ and $\sf{cat^2(Hopf_{K,coc})}$.
\end{theorem}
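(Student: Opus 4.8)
The strategy is to restrict the equivalence of categories given by the functors $\tilde F$ and $\tilde G$ from Proposition~\ref{equi action/split epi}. By Proposition~\ref{act sub crossed square} we have $\sf X^2(\Hopf_{K,coc}) \subseteq \sf Act^2(\Hopf_{K,coc})$, and by construction $\sf cat^2(\Hopf_{K,coc})$ is a (non-full) subcategory of $\sf Pt^2(\Hopf_{K,coc})$. So it suffices to check that $\tilde G$ sends the underlying Hopf $2$-action of a Hopf crossed square, decorated with its four boundary maps, to a cat$^2$-Hopf algebra, that $\tilde F$ does the reverse, and — since neither inclusion is full — that the extra morphism conditions match up on both sides.

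For the direction $\sf X^2 \to \sf cat^2$: let $(L,M,N,P,h,\lambda,\lambda',\mu,\nu)$ be a Hopf crossed square, pass to its underlying Hopf $2$-action via Proposition~\ref{act sub crossed square}, and apply $\tilde G$ to obtain the $2$-fold split epimorphism with big Hopf algebra $H = (L\rtimes N)\rtimes(M\rtimes P)$, split epimorphisms $s_1\col H\to M\rtimes P$ and $s_2\col H\to N\rtimes P$, and kernels $\HKer(s_1)\cong L\rtimes N$, $\HKer(s_2)\cong L\rtimes M$. The key observation is that the linear maps $\partial\col L\rtimes N\to M\rtimes P$, $l\ox n\mapsto \lambda(l)\ox\nu(n)$, and $\partial'\col L\rtimes M\to N\rtimes P$, $l\ox m\mapsto\lambda'(l)\ox\mu(m)$, are Hopf crossed modules for the module structures that $\tilde G$ puts on $\HKer(s_1)$ and $\HKer(s_2)$: that they are Hopf algebra morphisms and are suitably equivariant uses (CS2) and Remark~\ref{Remark}(1), while (CM1)--(CM2) for $\partial,\partial'$ unwind to (CS4), (CS5) and (CS6), the $h$-terms in the $\tilde G$-action being exactly what (CS4)--(CS6) control. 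By Propositions~\ref{Iso groupoid RGM} and \ref{equi xmod 1}, the cat$^1$-Hopf algebra attached to $\partial$ has underlying reflexive graph $H\rightrightarrows M\rtimes P$ with domain $s_1$; call its codomain $t_1$, and similarly obtain $t_2\col H\to N\rtimes P$ from $\partial'$. It then remains to verify the compatibilities (2C1)--(2C4) for $(s_1,t_1,s_2,t_2)$; these follow from the commutativity $\mu\lambda=\nu\lambda'=\kappa$ of the crossed square and from (CS3). This produces a cat$^2$-Hopf algebra, functorially in the crossed square.

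For the direction $\sf cat^2 \to \sf X^2$: given a cat$^2$-Hopf algebra $(H,N,M,s_N,t_N,s_M,t_M)$, forget $t_N,t_M$ and apply $\tilde F$ to the remaining $2$-fold split epimorphism, getting a Hopf $2$-action with $L=\HKer(s_N)\cap\HKer(s_M)$, with $P=N\cap M$-module slots $N\cap\HKer(s_M)$ and $\HKer(s_N)\cap M$, and with $h(n\ox m)=n_1m_1S(n_2)S(m_2)$. Using (2C2)--(2C4) one checks that $t_N$ and $t_M$ restrict appropriately to Hopf algebra morphisms supplying the four boundary maps $\lambda,\lambda',\mu,\nu$, with $\mu\lambda=\nu\lambda'=\kappa$ by (2C2). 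That the resulting square obeys (CS1)--(CS6) comes from the cat$^1$ conditions $[\HKer(s_N),\HKer(t_N)]=0$ and $[\HKer(s_M),\HKer(t_M)]=0$ — which, by the Huq-commutator characterisation in $\Hopf_{K,coc}$ recalled before Proposition~\ref{Iso groupoid RGM}, say that the relevant Hopf subalgebras commute elementwise — together with (2C1)--(2C4); in particular (CS1) is essentially the restriction of the ambient crossed modules, and (CS4)--(CS6) become identities among $t_N$, $t_M$, the inclusions and the antipode. Again this is functorial. Since $\tilde F$ and $\tilde G$ are quasi-inverse, it only remains to see that the additional data is recovered up to the given natural isomorphisms — that the $t_1,t_2$ built from a crossed square restrict back to $(\lambda,\lambda',\mu,\nu)$ and conversely — which is a direct check on the formulas. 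Finally, a morphism of cat$^2$-Hopf algebras is a Hopf algebra morphism that is a morphism of the underlying $2$-fold split epimorphism and moreover commutes with $t_N$ and $t_M$; under the equivalence, the former part corresponds to a quadruple $(\alpha,\beta,\gamma,\delta)$ that is a morphism of Hopf $2$-actions, and the latter part to its commuting with $\lambda,\lambda',\mu,\nu$, i.e.\ to a morphism of Hopf crossed squares, and conversely.

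The main obstacle is the verification that $\partial$ and $\partial'$ are genuine Hopf crossed modules: this is where the full strength of the crossed square axioms enters, (CS2) for equivariance, (CS4) and (CS5) to absorb the $h$-terms appearing in the $(M\rtimes P)$-action that $\tilde G$ places on $L\rtimes N$, and (CS6) for their multiplicativity — together with the symmetric task of deducing (CS1)--(CS6) from the cat$^2$ axioms, where the commutator characterisation of cat$^1$-Hopf algebras from \cite{GSV} must be combined carefully with (2C1)--(2C4). These are routine but lengthy Sweedler-notation computations, collected in the appendices.
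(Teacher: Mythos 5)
Your proposal is correct and takes essentially the same route as the paper: restrict the equivalence of Proposition~\ref{equi action/split epi} between Hopf $2$-actions and $2$-fold split epimorphisms, produce $t_1,t_2$ by feeding the crossed modules $l\otimes n\mapsto\lambda(l)\otimes\nu(n)$ and $l\otimes m\mapsto\lambda'(l)\otimes\mu(m)$ into the one-dimensional equivalence of Proposition~\ref{equi xmod 1}, and in the converse direction use $t_N,t_M$ together with the commutator conditions $[\HKer(s_N),\HKer(t_N)]=0$ and $[\HKer(s_M),\HKer(t_M)]=0$ to establish (CS1)--(CS6). The only ingredient you leave implicit is the explicit Hopf algebra isomorphism $\psi\colon(L\rtimes N)\rtimes(M\rtimes P)\to(L\rtimes M)\rtimes(N\rtimes P)$, $l\otimes n\otimes m\otimes p\mapsto lS(h(m_1\otimes n_1))\otimes m_2\otimes n_2\otimes p$, which the paper constructs and verifies in order to transport the second cat$^1$ structure onto the same ambient Hopf algebra --- this is exactly what your observation $\HKer(s_2)\cong L\rtimes M$ is standing in for.
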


\begin{proof}
On the one hand, let us consider a cat$^2$-Hopf algebra $(H, N, M, s_N , t_N , s_M , t_M )$. We have an underlying  $2$-fold spilt epimorphism $(H, N, M, s_N , s_M )$, from which we can construct a Hopf $2$-action as in the proof of Proposition \ref{equi action/split epi}. With this Hopf $2$-action, the following square satisfies the properties of a Hopf crossed square
\begin{center}
\begin{tikzpicture}[descr/.style={fill=white},baseline=(A.base)]
\node (X) at (0,-3) {$HKer(s_N) \cap M$};
\node (A) at (0,0) {$HKer(s_N) \cap HKer(s_M)$};
\node (B) at (5,0) {$N \cap HKer(s_M)$};
\node (C) at (5,-3) {$N \cap M.$};
\path[-stealth] 
  (A.south) edge node[right] {$t_M$}  (X.north)
  (X.east) edge node[above] {$t_N$}  (C.west)
   (A.east)  edge node[above] {$t_N$} (B.west)
(B.south) edge node[right] {$t_M$} (C.north);
\end{tikzpicture}
\end{center}
In Appendix \ref{D_2}, we check that it is indeed a Hopf crossed square.

On the other hand, if we have a Hopf crossed square $(L,M,N,P,h, \lambda, \lambda', \mu, \nu)$, in particular we have a corresponding Hopf $2$-action (see Proposition \ref{act sub crossed square}).

Thanks to (3) of Remark \ref{Remark} and Proposition \ref{act sub crossed square}, we know that $(L,N,M,P,S \cdot h \cdot \sigma)$ satisfies all the conditions of the definition of Hopf $2$-action where the roles of $M$ and $N$ are exchanged. Hence, similarly to the construction in Proposition \ref{equi action/split epi}, we can construct the semi-direct product $(L\rtimes M)\rtimes (N \rtimes P)$, with $(L \rtimes M)$ seen as an $(N \rtimes P)$-module, thanks to the following linear map 
\[ {(n \otimes p)} \triangleright (l \otimes m) = ({n_1}\triangleright ({p_1} \triangleright l))S(h({p_2} \triangleright m_1 \otimes n_2)) \otimes {p_3} \triangleright m_2.\]
The multiplication in  $(L\rtimes M)\rtimes (N \rtimes P)$, is given by the following linear map \begin{align*}
(l \otimes m \otimes n \otimes p) (l' \otimes m' \otimes n' \otimes p')= l \Big({m_1}\triangleright &\big({n_1}\triangleright ({p_1} \triangleright l')\big) \Big)({m_2} \triangleright S(h({p_2} \triangleright m'_1 \otimes n_2))) \\ & \otimes m_3 ({p_3} \triangleright m'_2)\otimes  n_3 ({p_4} \triangleright n') \otimes p_5p' .
\end{align*}
Note that the semi-direct products $(L\rtimes N)\rtimes (M \rtimes P)$ and $(L\rtimes M)\rtimes (N \rtimes P)$ are isomorphic: the isomorphisms are given by

\begin{equation}\label{iso}
\psi \colon (L\rtimes N)\rtimes (M \rtimes P) \to (L\rtimes M)\rtimes (N \rtimes P) \colon (l \otimes n \otimes m \otimes p) \mapsto lS(h(m_1 \otimes n_1)) \otimes m_2 \otimes n_2 \otimes p
\end{equation}
\begin{equation*}
\psi^{-1} \colon (L\rtimes M)\rtimes (N \rtimes P) \to (L\rtimes N)\rtimes (M \rtimes P) \colon (l \otimes m \otimes n \otimes p) \mapsto lh(m_1 \otimes n_1) \otimes n_2 \otimes m_2 \otimes p.
\end{equation*}

These maps are isomorphisms of Hopf algebras (the verifications are done in Appendix \ref{C_2}.

Remark that $d \colon L \rtimes N \to M \rtimes P$ and $d' \colon L \rtimes M \to N \rtimes P$ defined by
\[ d (l \otimes n) = \lambda(l) \otimes \nu(n),\]
\[ d' (l \otimes m) = \lambda'(l) \otimes \mu(m),\]
are crossed modules (the verifications are done in Appendix \ref{D_3}). 
  Thus, we apply the equivalence in one dimension of Proposition \ref{equi xmod 1}, to obtain two cat$^1$-Hopf algebras where the source and the target maps are defined as  
\begin{align*}
&s_1 \colon  (L\rtimes N)\rtimes (M \rtimes P) \to (M \rtimes P) \colon s_1(l \otimes n \otimes m \otimes p) = \epsilon(l)\epsilon(n) (m \otimes p) ,\\
&t_1 \colon  (L\rtimes N)\rtimes (M \rtimes P) \to (M \rtimes P) \colon t_1(l \otimes n \otimes m \otimes p) = \lambda(l)({\nu(n_1)} \triangleright m) \otimes \nu(n_2)p,\\
&s'_1 \colon  (L\rtimes M)\rtimes (N \rtimes P) \to (N \rtimes P) \colon s'_1(l \otimes m \otimes n \otimes p) = \epsilon(l)\epsilon(m) (n \otimes p),\\
&t'_1 \colon  (L\rtimes M)\rtimes (N \rtimes P) \to (N \rtimes P) \colon t'_1(l \otimes m \otimes n \otimes p) =  \lambda'(l)({\mu(m_1)} \triangleright n) \otimes \mu(m_2)p,
\end{align*} 
and they have trivial commutators.

 By using the isomorphism $\psi$ as defined in \eqref{iso} we obtain the following two cat$^1$-Hopf algebras
\begin{align*}
&s_1 \colon  (L\rtimes N)\rtimes (M \rtimes P) \to (M \rtimes P) \colon s_1(l \otimes n \otimes m \otimes p) = \epsilon(l)\epsilon(n) (m \otimes p) ,\\
&t_1 \colon  (L\rtimes N)\rtimes (M \rtimes P) \to (M \rtimes P) \colon t_1(l \otimes n \otimes m \otimes p) = \lambda(l)({\nu(n_1)} \triangleright m) \otimes \nu(n_2)p, \\
&s_2 \colon  (L\rtimes N)\rtimes (M \rtimes P) \to (N \rtimes P) \colon s_2(l \otimes n \otimes m \otimes p) = \epsilon(l)\epsilon(m) (n \otimes p), \\
&t_2 \colon  (L\rtimes N)\rtimes (M \rtimes P) \to (N \rtimes P) \colon t_2(l \otimes n \otimes m \otimes p) =  \lambda'(l)n \otimes \mu(m)p,
\end{align*} 
where $s_2 = s'_1 \cdot \psi$ and $t_2 =  t'_1 \cdot  \psi$. 
  
We note that the kernels of $s_2$ and $t_2$ commute since $[HKer(s'_1), HKer(t'_1)]=0$. Indeed, let $a$ be an element of $HKer(s_2)$ (i.e.\ $s_2(a_1) \otimes a_2 = 1 \otimes a$) and $b$ and element of $HKer(t_2)$, then $\psi(a)$ belongs to $HKer(s'_1)$ ($s'_1 \cdot \psi (a_1) \otimes \psi (a_2) = 1 \otimes \psi(a) $) and $\psi(b)$ belongs to $HKer(t'_1)$. Hence, $ab= \psi^{-1}(\psi(a)\psi(b)) = \psi^{-1}(\psi(b)\psi(a)) = ba$. 

 We easily check the conditions of compatibility for being a cat$^2$-Hopf algebra in the Appendix \ref{D_4}.
This construction extends to the morphisms (Appendix \ref{D_5}). We obtain two functors that we denote $\hat{F} \colon  \sf{cat^2(\sf{Hopf}_{K,coc})} \to  \sf{X^2(\sf{Hopf}_{K,coc})}$ and $\hat{G} \colon  \sf{X^2(\sf{Hopf}_{K,coc})} \to  \sf{cat^2(\sf{Hopf}_{K,coc})}$. They describe an equivalence of categories between Hopf crossed squares and cat$^2$-Hopf algebras, this equivalence is proved in Appendix \ref{D_6}.
\end{proof}
Finally, thanks to this theorem we have completely described the notion of internal crossed modules (in the sense of Janelidze) in the category of crossed modules of cocommutative Hopf algebras, which are what we called Hopf crossed squares. This remark is clarified in the following corollary.

\begin{corollary}
$\mathsf{XMod}(\mathsf{HXMod_{K, coc}}) $ is equivalent to $\sf{X^2(\Hopf_{K,coc})}$.
\end{corollary}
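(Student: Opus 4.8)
The plan is to derive this equivalence purely formally by concatenating results that are already available. First I would recall Lemma~\ref{equiv preli}, which provides the chain
\[
\mathsf{XMod}(\mathsf{HXMod_{K, coc}}) \;\cong\; \sf{Grpd}(\mathsf{HXMod_{K, coc}}) \;\cong\; \sf{Grpd}(\sf{Grpd}(\Hopf_{K, coc})) \;=\; \sf{Grpd}^2(\Hopf_{K, coc}),
\]
in which the first equivalence is Janelidze's theorem (internal crossed modules are equivalent to internal groupoids) applied to the semi-abelian category $\mathsf{HXMod_{K, coc}}$, and the second is obtained by applying the one-dimensional equivalence of Proposition~\ref{equi xmod 1} between $\mathsf{HXMod_{K, coc}}$ and $\sf{Grpd}(\Hopf_{K, coc})$ inside $\sf{Grpd}(-)$.

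Next I would bring in the structural identifications of the present section. By Proposition~\ref{iso group cat2} there is an isomorphism $\sf{Grpd}^2(\Hopf_{K, coc}) \cong \sf{cat^2}(\Hopf_{K,coc})$, and by Theorem~\ref{equi crossed squate 2-cat} there is an equivalence $\sf{cat^2}(\Hopf_{K,coc}) \simeq \sf{X^2}(\Hopf_{K,coc})$, realised by the functors $\hat{F}$ and $\hat{G}$. Composing everything,
\[
\mathsf{XMod}(\mathsf{HXMod_{K, coc}}) \;\cong\; \sf{Grpd}^2(\Hopf_{K, coc}) \;\cong\; \sf{cat^2}(\Hopf_{K,coc}) \;\simeq\; \sf{X^2}(\Hopf_{K,coc}),
\]
which is the assertion of the corollary.

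I do not expect a genuine obstacle here: all of the substantive work has been done in Theorem~\ref{equi crossed squate 2-cat} and in the identifications that precede it, so this corollary is really just a record of what that theorem says once reformulated in intrinsic, Janelidze-style language. The one point deserving a word of care is that $\mathsf{XMod}(\mathsf{HXMod_{K, coc}})$ must be understood as the category of internal crossed modules in the sense of \cite{Jan}, which presupposes that $\mathsf{HXMod_{K, coc}}$ is semi-abelian; this is precisely the hypothesis already invoked in Lemma~\ref{equiv preli}. If desired, one could make the composite equivalence explicit by transporting $\hat{F}$ and $\hat{G}$ along the isomorphisms above, but this amounts to routine book-keeping and adds nothing conceptual.
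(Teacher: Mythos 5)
Your proposal is correct and follows exactly the paper's own argument: the paper's proof consists of composing Lemma~\ref{equiv preli} with Theorem~\ref{equi crossed squate 2-cat} (via the isomorphism of Proposition~\ref{iso group cat2}), just as you do. Your additional remark that Proposition~\ref{iso group cat2} is needed to bridge $\sf{Grpd}^2(\Hopf_{K,coc})$ and $\sf{cat^2}(\Hopf_{K,coc})$ is a helpful explicitation of a step the paper leaves implicit, but the substance is identical.
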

\begin{proof}
This equivalence is clear thanks to Lemma \ref{equiv preli} and Theorem \ref{equi crossed squate 2-cat}
\end{proof}

In \cite{Emir}, K. Emir
 worked on a closely related concept, the notion of \emph{2-crossed modules of cocommutative Hopf algebras}. He defined a concept which unifies the  2-crossed modules of groups and of Lie algebras. Moreover, similarly to the case of groups and Lie algebras, he showed an equivalence between the category of simplicial
cocommutative Hopf algebras with Moore complex of length 2 and the category of 2-crossed modules of cocommutative Hopf algebras.
We know that this notion and the notion of Hopf crossed square are not equivalent. Indeed, in the particular case of groups, Conduch\'e \cite{Conduche} explained that from a crossed square we can form two crossed 2-modules whose associated simplicial groups
(see Proposition 3.4 in \cite{Conduche}) are homotopic, but they are not isomorphic. Moreover, Faria Martins  proved that in groups, 2-crossed modules are equivalent to what he called \emph{neat crossed squares} \cite{Martins}. However, it would be interesting to study the link between these two $2$-dimensional generalizations of crossed modules. Another interesting perspective would be to describe the non-abelian tensor product of cocommutative Hopf algebras by following the approach using the crossed squares in \cite{DV}. The notions of non-abelian tensor of groups \cite{BL} and Lie algebras \cite{Ellis2} would be special cases of this notion of non-abelian tensor product of cocommutative Hopf algebras.


\appendix
\section{A Hopf crossed square is a Hopf $2$-action}
\label{A_1}
We give some details of the proof of Proposition \ref{act sub crossed square}.

We consider a Hopf crossed square $(L,M,N,P,h, \lambda, \lambda', \mu, \nu)$, thanks to $\mu$ and $\nu$, we construct \[M \otimes L \to L \colon m \otimes l \mapsto {\mu(m)}  \triangleright l,\]  \[N \otimes L \to L \colon n \otimes l \mapsto {\nu(n)}  \triangleright l,\] which turns $L$ into an $M$-module Hopf algebra and  an $N$-module Hopf algebra.

We use the first property of crossed modules (CM1), \eqref{m et act} and the property of the antipode to obtain the axiom (2A1),
\[ {\mu({p_1}  \triangleright m)} \triangleright ({p_2}  \triangleright l) \stackrel{(CM1)}{=}  ({p_1\mu(m)S(p_2)) \act (p_3}  \triangleright l) \stackrel{\eqref{m et act}}{=}({p_1\mu(m)S(p_2)p_3})  \triangleright l \stackrel{\eqref{m et act}}{=}  {p}  \triangleright ({\mu(m)}  \triangleright l),\]
\[ {\nu({p_1} \triangleright n)} \triangleright ({p_2}  \triangleright l) \stackrel{(CM1)}{=}  ({p_1\nu(n)S(p_2)) \act (p_3}  \triangleright l) \stackrel{\eqref{m et act}}{=}  ({p_1\nu(n)S(p_2)p_3})  \triangleright l \stackrel{\eqref{m et act}}{=}  {p}  \triangleright ({\nu(n)}  \triangleright l).\]
Thanks to  the condition $(CS5)$ and \eqref{act et 1}, we prove that $h$ satisfies (2A3), for any $m \in M $ and any $n \in N$,
\[ h(1_M \otimes n) = h(\lambda(1_L) \otimes n) \stackrel{(CS5)}{=} 1_L ({\nu(n)}  \triangleright 1_L) \stackrel{\eqref{act et 1}}{=} \epsilon(n)1_L \]
\[ h(m \otimes 1_N) = h(m \otimes \lambda'(1_L)) \stackrel{(CS5)}{=} ({\mu(m)}  \triangleright 1_L)S(1_L) \stackrel{\eqref{act et 1}}{=} \epsilon(m)1_L\]
Since $(2A2)=(CS3)$ and $(2A4)=(CS6)$, the last condition to prove is  (2A5), i.e.\
 \begin{equation*}
({\mu(m_1)\nu(n_1)} \triangleright l )h( m_2 \otimes n_2) = h( m_1 \otimes n_1)  ({\nu(n_2)\mu(m_2)}  \triangleright l),
\end{equation*} 
for any $m \otimes n \otimes l \in M \otimes N \otimes L$. It is a consequence of the conditions $(CS4)$ and $(CS5)$. Indeed, on the one hand, we have for any $m' \otimes n' \otimes l'$ in $M \otimes N \otimes L$, 
\begin{align*}
h(\lambda(h(m' \otimes n')) \otimes \lambda'(l')) \stackrel{(CS5)}{=} &({\mu \cdot \lambda (h(m' \otimes n'))} \triangleright l'_1) S(l'_2) \\
\stackrel{(CS4)}{=}& \big({\mu (m'_1({\nu(n')} \triangleright S(m'_2)))}  \triangleright l'_1\big)S(l'_2) \\
\stackrel{\; \; \; \; \; \; \; \; \; \; }{=}& ({\mu (m'_1)\mu({\nu(n')} \triangleright S(m'_2))}  \triangleright l'_1)S(l'_2) \\
\stackrel{(CM1)}{=}& ({\mu (m'_1){\nu(n_1')} \mu(S(m'_2))S(\nu(n'_2))}  \triangleright l'_1)S(l'_2) 
\end{align*} 
  and on the other hand
\begin{align*}
h(\lambda(h(m' \otimes n')) \otimes \lambda'(l')) \stackrel{(CS5)}{=} &h(m' \otimes n')_1({\nu \cdot \lambda'(l') }  \triangleright S(h(m' \otimes n')_2))\\
\stackrel{\; \; \; \; \; \; \; \; \; \; }{=}& h(m'_1 \otimes n'_1)({\nu \cdot \lambda'(l') } \triangleright S(h(m'_2 \otimes n'_2)))  \\
 \stackrel{(CM2)}{=}& h(m'_1 \otimes n'_1)l'_1S(h(m'_2 \otimes n'_2))S(l'_2).
\end{align*} 
By combining these two equalities, we have the following identity
\begin{align*}
&\big({\mu (m'_1)\nu(n'_1)\mu(S(m'_2))S(\nu(n'_2))}  \triangleright l'_1\big)S(l'_2) \otimes l'_3 \otimes h(m'_3 \otimes n'_3)\\ & \hspace{2cm} = h(m'_1 \otimes n'_1)l'_1S(h(m'_2 \otimes n'_2))S(l'_2) \otimes l'_3 \otimes h(m'_3 \otimes n'_3) .
\end{align*}
By multiplying the four components of the tensor products together, we obtain 
\begin{equation*}
\big( {\mu (m'_1)\nu(n'_1)\mu(S(m'_2))S(\nu(n'_2))}  \triangleright l' \big) h(m'_3 \otimes n'_3) = h(m' \otimes n')l'.
\end{equation*}
For the particular element  $m' \otimes n' \otimes l' = m_1 \otimes n_1 \otimes ({\nu(n_2)\mu (m_2)}  \triangleright l)$, in $M \otimes N \otimes L$, with any $m \in M$, any $n \in N$ and any $l \in L$, we recover 
the condition (2A5) (thanks to \eqref{m et act}),
\[{(\mu(m_1) \act (\nu(n_1)}  \triangleright l ))h( m_2 \otimes n_2) = h( m_1 \otimes n_1)  ({\nu(n_2) \act (\mu(m_2)}  \triangleright l)).\]
We verified the five axioms of the definition of Hopf $2$-action (Definition \ref{2-action}), hence we conclude that $\sf X^2(Hopf_{K,coc})$ is a subcategory of $\sf Act^2(Hopf_{K,coc})$.

\section{Crossed square of Lie algebras}\label{E}
We give some elements of the proof of Proposition \ref{double adjunction}.
Let $(L,M,N,P,h, \lambda, \lambda', \mu, \nu)$ be a Hopf crossed square. By applying the functor $\mathcal{P} \colon \sf{Hopf}_{K,coc} \to \mathsf{Lie}_K$, we obtain the definition of a crossed square of Lie algebras. To give an idea of how it works, we prove that the condition (CS6) implies the condition $(v)$ of Definition \ref{x2mod lie}. We use the explicit form of the Hopf algebra structure of primitive elements (i.e.\ $\Delta (x) = x \ox 1_H + 1_H \ox x$ for any $x \in \mathcal{P}(H)$, with $H$ a cocommutative Hopf algebra). For any $m,m' \in M$ and $n \in N$, we have
\begin{align*}
h([m,m']\otimes n) &= h((mm' - m'm) \otimes n)\\
&= h(mm'\otimes n) - h(m'm \otimes n)\\
 & \stackrel{\mathclap{(CS6)}}{=} \;  ({\mu(m_1)}  \triangleright h(m' \otimes n_1))h(m_2 \otimes n_2)- ({\mu(m'_1)}  \triangleright h(m \otimes n_1))h(m'_2 \otimes n_2)\\
&= ({\mu(m)}  \triangleright h(m' \otimes n_1))h(1 \otimes n_2) + ({\mu(1)}  \triangleright h(m' \otimes n_1))h(m \otimes n_2) \\
& \hspace{2cm} - ({\mu(m')}  \triangleright h(m \otimes n_1))h(1 \otimes n_2) - ({\mu(1)}  \triangleright h(m \otimes n_1))h(m' \otimes n_2)\\
&= ({\mu(m)}  \triangleright h(m' \otimes n))h(1 \otimes 1) + ({\mu(1)}  \triangleright h(m' \otimes n))h(m \otimes 1) \\
& \hspace{2cm} {\mu(m)}  \triangleright h(m' \otimes 1))h(1 \otimes n) + ({\mu(1)}  \triangleright h(m' \otimes 1))h(m \otimes n) \\
& \hspace{2cm} - ({\mu(m')}  \triangleright h(m \otimes n))h(1 \otimes 1) - ({\mu(1)}  \triangleright h(m \otimes n))h(m' \otimes 1) \\
& \hspace{2cm} - ({\mu(m')}  \triangleright h(m \otimes 1))h(1 \otimes n) - ({\mu(1)}  \triangleright h(m \otimes 1))h(m' \otimes n) \\
&= ({\mu(m)}  \triangleright h(m' \otimes n)) - ({\mu(m')}  \triangleright h(m \otimes n)).
\end{align*} 
  
The last equality follows from the observations that a Hopf crossed square is a Hopf $2$-action, then it satisfies (2A3) and that $\epsilon(x) =0$ when $x$ is a primitive element.
\section{Equivalence of categories between $\sf Act^2(\mathsf{Hopf}_{K,coc})$ and $\sf Pt^2(\mathsf{Hopf}_{K,coc}) $}
  The appendixes \ref{B}, \ref{C_1}, \ref{C_3}, \ref{C_4} are used in the proof of Proposition \ref{equi action/split epi}.

\subsection{Construction of a Hopf $2$-action from a $2$-fold split epimorphism}\label{B}
Let consider $(H,N,$ $M,s_N,s_M)$ a $2$-fold split epimorphism.
The linear application  $h \colon (N \cap HKer(s_M)) \otimes (HKer(s_N) \cap M) \to HKer(s_N) \cap HKer(s_M)$ defined as \[h(n \otimes m) = n_1m_1S(n_2)S(m_2),\] is well-defined. Indeed,
 by using the fact that $s_N$ is a morphism of Hopf algebras, that $m \in HKer(s_N)$ and the cocommutativity, we can see that for $n \in N \cap HKer(s_M)$ and $m \in  HKer(s_N)\cap M$, $n_1m_1S(n_2)S(m_2)$ belongs to $HKer(s_N) \cap HKer(s_M)$ thanks to the following identities,
\begin{align*}
&s_N((n_1m_1S(n_2)S(m_2))_1) \otimes (n_1m_1S(n_2)S(m_2))_2 \\ &= s_N(n_1)s_N(m_1)s_N(S(n_4))s_N(S(m_4)) \otimes n_2m_2S(n_3)S(m_3) \\
&=  s_N(n_1)s_N(S(n_4)) \otimes n_1m_2S(n_2)S(m_3) \\
&=  1_N \otimes n_1m_1S(n_2)S(m_2).
\end{align*} 
 Similar computations work for $HKer(s_M)$.

Since we are dealing with cocommutative Hopf algebras, $h$ is a coalgebra morphism. Let us check that it is also an $(N \cap M)$-module morphism (i.e.\ condition $(2A2)$). With $p \in N \cap M$, $n \in N \cap HKer(s_M)$ and $m \in HKer(s_N) \cap M$, we obtain
\begin{align*}
h({p_1}  \triangleright n \otimes {p_2}  \triangleright m) &= ({p_1}  \triangleright n)_1({p_2}  \triangleright m)_1S(({p_1}  \triangleright n)_2)S(({p_2}  \triangleright m)_2) \\
&= (p_{1_1}nS(p_{1_2}))_1(p_{2_1}mS(p_{2_2}))_1S((p_{1_1}nS(p_{1_2}))_2)S((p_{2_1}mS(p_{2_2}))_2)\\
&= p_{1}n_1S(p_{2})p_{3}m_1S(p_{4})S(p_{5}n_2S(p_{6}))S(p_{7}m_2S(p_{8}))\\
 &=  p_{1}n_1m_1S(n_2)S(m_2)S(p_2)\\
&= p  \triangleright h(n \otimes m).
\end{align*} 

The properties $(2A1)$, $(2A3)$, $(2A4)$ and $(2A5)$ of the definition of Hopf $2$-action are easily checked by definition of the modules and of $h$ as it is shown in the following equalities, where $p \in N \cap M$, $n \in N \cap HKer(s_M)$ and $m,m' \in HKer(s_N) \cap M$,
\begin{align*}
({{p_1}  \triangleright m})  \triangleright ({p_2}  \triangleright l) &= ({p_1mS(p_2)})  \triangleright (p_3lS(p_4)) \\
&= (p_1mS(p_2))_1(p_3lS(p_4))S((p_1mS(p_2))_2) \\
&\stackrel{\mathclap{\eqref{coco}}}{=} \; p_1m_1lS(m_2)S(p_2)\\
&= p  \triangleright (m  \triangleright l)
\end{align*} 
\begin{align*}
h(1_N \otimes m) = 1_Nm_11_NS(m_2)= \epsilon(m)1_{HKer(s_N) \cap HKer(s_M)}
\end{align*} 
\begin{align*}
h(n \otimes mm') &= n_1m_1m'_1S(n_2)S(m'_2)S(m_2)\\
&= n_1m_1S(n_2)S(m_2)m_3n_3m'_1S(n_4)S(m'_2)S(m_4) \\
&=  h(n_1 \otimes m_1)({m_2}  \triangleright h(n_2 \otimes m'))
\end{align*} 
\begin{align*}
({n_1}  \triangleright ({m_1}  \triangleright l)) h( n_2 \otimes m_2) &= n_1m_1lS(m_2)S(n_2)n_3m_3S(n_4)S(m_4)\\
&= n_1m_1lS(n_2)S(m_2)\\
 &= n_1m_1S(n_2)S(m_2)m_3n_3lS(n_4)S(m_4)\\
&= h( n_1 \otimes m_1)  ({m_2}  \triangleright ({n_2}  \triangleright l)).
\end{align*} 
The second equalities also hold for $(2A1)$, $(2A3)$ and $(2A4)$ with a similar computation as above. Hence, we conclude that we formed a Hopf $2$-action from the $2$-split epimorphism $(H,N,M,s_N,t_N)$.

\subsection{Construction of an $(L \rtimes N)$-module Hopf algebra from a Hopf 2-action}\label{C_1}
Let $(L, M, N,$ $ P, h)$ be a Hopf $2$-action. We check that the following linear map defines $L \rtimes N$ as an $(M \rtimes P)$-module Hopf algebra,
\begin{equation} \label{action C1}
{(m \otimes p)}  \triangleright (l \otimes n) = ({m_1}  \triangleright ({p_1}  \triangleright l))h(m_2 \otimes {p_2}  \triangleright n_1) \otimes {p_3}  \triangleright n_2.
\end{equation}
We verify the condition \eqref{m et act} for the linear map defined as \eqref{action C1} thanks to the following computations for any $l \in L$, $m,m' \in M$, $n \in N$ and $p,p' \in P$,
\begin{align*}
& {(m \otimes p)}  \triangleright \big({(m' \otimes p')}  \triangleright (l \otimes n) \big) = {(m \otimes p)}  \triangleright \big(({m'_1}  \triangleright ({p'_1}  \triangleright l))h(m'_2 \otimes {p'_2}  \triangleright n_1) \otimes {p'_3}  \triangleright n_2 \big) \\
&\stackrel{ \; \; \; \; \; \; \; \; \; \; \; \; \; \; \; \; \;}{=}\Big({m_1} \triangleright \big({p_1}  \triangleright ({m'_1}  \triangleright ({p'_1}  \triangleright l))h(m'_2 \otimes {p'_2}  \triangleright n_1) \big)\Big)h(m_2 \otimes {p_2}  \triangleright ({p'_3}  \triangleright n_2)_1) \otimes {p_3}  \triangleright ({p'_3}  \triangleright n_2)_2\\
&\stackrel{ \eqref{act et m} + \eqref{m et act}}{=} \big({m_1}  \triangleright ({p_1}  \triangleright ({m'_1}  \triangleright ({p'_1}  \triangleright l)))\big)\big({m_2}\triangleright ({p_2}  \triangleright h(m'_2 \otimes {p'_2}  \triangleright n_1))\big)  h(m_3 \otimes ({p_3p'_3}) \triangleright n_2)\\
& \hspace{2cm } \otimes ({p_4p'_4})  \triangleright n_3\\
&\stackrel{ \; \; \; \;  (2A2) \; \; \; \; }{=} \big({m_1}  \triangleright ({p_1}  \triangleright ({m'_1} \triangleright ({p'_1}  \triangleright l)))\big)\big({m_{2}}  \triangleright h({p_{2_1}}  \triangleright m'_2 \otimes ({p_{2_2}}  \triangleright ({p'_2}  \triangleright n_1)))\big)h(m_{3} \otimes ({p_3p'_3})  \triangleright n_2) \\ 
& \hspace{2cm} \otimes ({p_4p'_4})  \triangleright n_3\\
&\stackrel{ \; \; \; \; \eqref{m et act} \; \; \; \; }{=}  \big({m_1}  \triangleright ({p_1}  \triangleright ({m'_1} \triangleright ({p'_1}  \triangleright l)))\big)\big({m_{2_1}}  \triangleright h({p_2}  \triangleright m'_2 \otimes (({p_3p'_2})  \triangleright n_1)_1)) h(m_{2_2} \otimes (({p_3p'_2}) \triangleright n_1)_2) \\ 
& \hspace{2cm} \otimes ({p_4p'_3})  \triangleright n_2\\
&\stackrel{ \; \; \; \; (2A4) \; \; \; \; }{=}  ({m_1} \triangleright ({p_1} \triangleright ({m'_1} \triangleright ({p'_1} \triangleright l))))h(m_2 ({p_2} \triangleright m'_2) \otimes ({p_3p'_2})\triangleright n_1)  \otimes ({p_4p'_3})\triangleright n_2\\
&\stackrel{ \; \; \; \;  (2A1) \; \; \; \;}{=}  ({m_1}\triangleright (({{p_{1_1}} \triangleright m'_1}) \triangleright ({p_{1_2}}\triangleright ({p'_1}\triangleright l))))h(m_2({p_2} \triangleright m'_2) \otimes ({p_3p'_2}) \triangleright n_1)  \otimes ({p_4p'_3}) \triangleright n_2\\
&\stackrel{ \eqref{coco} + \eqref{m et act}}{=}  ({(m({p_1} \triangleright m'))_1} \triangleright ({(p_2p')_1}\triangleright l))h((m({p_1} \triangleright m'))_2 \otimes {(p_2p')_2} \triangleright n_1)  \otimes {(p_2p')_3} \triangleright n_2\\
&\stackrel{\;  def. \;\;  \eqref{action C1}}{=} ({m({p_1} \triangleright m') \otimes p_2p')} \triangleright (l \otimes n) \\
&\stackrel{ def. \; M_{M \rtimes P}}{=} \big({(m \otimes p)(m' \otimes p')}\big) \triangleright (l \otimes n) .
\end{align*} 

The cocommutativity of the Hopf algebras $M$ and $P$, the property \eqref{act et delta} and the fact that $h$ is a morphism of coalgebras imply the condition \eqref{act et delta} for the linear map defined by \eqref{action C1} for any $l \in L$, $m \in M$, $n \in N$ and $p \in P$.
\begin{align*}
&({(m \otimes p)} \triangleright (l \otimes n))_1  \otimes ({(m \otimes p)} \triangleright (l \otimes n))_2\\ & = (({m_1} \triangleright ({p_1} \triangleright l))h(m_2 \otimes {p_2} \triangleright n_1) \otimes {p_3} \triangleright n_2)_1 \otimes (({m_1} \triangleright ({p_1} \triangleright l))h(m_2 \otimes {p_2} \triangleright n_1) \otimes {p_3} \triangleright n_2)_2 \\
&\stackrel{ \mathclap{\eqref{act et delta}}}{=} \; ({m_{1_1}} \triangleright ({p_{1_1}} \triangleright l_1))h(m_2 \otimes {p_2} \triangleright n_1)_1 \otimes {p_{3_1}} \triangleright n_{2_1} \otimes ({m_{1_2}} \triangleright ({p_{1_2}} \triangleright l_2))h(m_2 \otimes {p_2} \triangleright n_1)_2 \otimes {p_{3_2}} \triangleright n_{2_2} \\
&= ({m_{1}} \triangleright ({p_{1}} \triangleright l_1))h(m_3 \otimes {p_3} \triangleright n_1) \otimes {p_{5}} \triangleright n_{3} \otimes ({m_{2}} \triangleright ({p_{2}} \triangleright l_2))h(m_4 \otimes {p_4} \triangleright n_2) \otimes {p_{6}} \triangleright n_{4} \\
&\stackrel{ \mathclap{\eqref{coco}}}{=} \;  ({m_{1}} \triangleright ({p_{1}} \triangleright l_1))h(m_2 \otimes {p_2} \triangleright n_1) \otimes {p_{3}} \triangleright n_{2} \otimes ({m_{3}} \triangleright ({p_{4}} \triangleright l_2))h(m_4 \otimes {p_5} \triangleright n_3) \otimes {p_{6}} \triangleright n_{4} \\
&= {(m_1 \otimes p_1)} \triangleright (l_1 \otimes n_1) \otimes {(m_2 \otimes p_2)} \triangleright (l_2 \otimes n_2).
\end{align*} 

Thanks to the following equalities, for any $l \in L$, $m \in M$, $n \in N$ and $p \in P$,
\begin{align*}
(\epsilon \otimes \epsilon)({(m \otimes p)} \triangleright (l \otimes n)) &=(\epsilon \otimes \epsilon)(({m_{1}} \triangleright ({p_{1}} \triangleright l))h(m_2 \otimes {p_2} \triangleright n_1) \otimes {p_3} \triangleright n_2) \\
&= \epsilon({m_{1}} \triangleright ({p_{1}} \triangleright l))\epsilon(h(m_2 \otimes {p_2} \triangleright n_1)) \otimes \epsilon({p_3} \triangleright n_2)) \\
&= \epsilon(l) \epsilon(h(m \otimes p \triangleright n)) \\
&= \epsilon(l)\epsilon(m) \epsilon(p) \epsilon(n),
\end{align*} 
  the linear map \eqref{action C1} satisfies \eqref{act et epsilon} thanks to the properties of the linear map $\epsilon$ (\eqref{act et epsilon}, $\epsilon \cdot h =h$).

The conditions \eqref{1 et act} and \eqref{act et 1} for \eqref{action C1} are satisfied thanks to (2A3) for any $l \in L$, $m \in M$, $n \in N$ and $p \in P$:
\begin{align*}
{(1_M \otimes 1_P)} \triangleright (l \otimes n) &= ({1_M} \triangleright (1_P \triangleright l))h(1_M \otimes {1_P} \triangleright n_1) \otimes {1_P} \triangleright n_2\\
 &\stackrel{\mathclap{(2A3)}}{=} \; l \epsilon(n_1) \otimes n_2\\
&= l \otimes n,
\end{align*} 

\begin{align*}
{(m \otimes p)} \triangleright (1_L \otimes 1_N )&= ({m_{1}} \triangleright ({p_{1}} \triangleright 1_L))h(m_2 \otimes {p_2} \triangleright 1_N) \otimes {p_3} \triangleright 1_N \\
&\stackrel{\mathclap{\eqref{act et epsilon}}}{=} \;  h(m\epsilon(p) \otimes 1_N) \otimes 1_N \\
&\stackrel{\mathclap{(2A3)}}{=} \; \epsilon(m)\epsilon(p)(1_L \otimes 1_N).
\end{align*} 
  
Finally, we verify the condition \eqref{act et m} for \eqref{action C1} for any $l,l' \in L$, $m \in M$, $n,n' \in N$ and $p \in P$. It holds thanks to several properties of the definition of Hopf $2$-actions and the modules, as we can see in the following identities.
\begin{align*}
&{(m \otimes p)} \triangleright \big((l \otimes n) (l' \otimes n')\big)= {(m \otimes p)} \triangleright (l ({n_1} \triangleright l' )\otimes n_2n') \\
&= ({m_{1}} \triangleright ({p_{1}} \triangleright l({n_1} \triangleright l')))h(m_2 \otimes {p_2} \triangleright (n_2n'_1)) \otimes {p_3} \triangleright (n_3n'_2)\\
&\stackrel{\mathclap{\eqref{act et m}}}{=} \;  ({m_{1}} \triangleright ({p_{1}} \triangleright l)) ({m_{2}} \triangleright ({p_{2}} \triangleright ({n_1} \triangleright l')))h(m_3 \otimes {p_3} \triangleright (n_2n'_1)) \\ & \hspace{2cm}\otimes {p_4} \triangleright (n_3n'_2)\\
&\stackrel{\mathclap{(2A1)}}{=}\; ({m_1} \triangleright ({p_1} \triangleright l)) ({m_{2}}\triangleright (({{p_{2_1}} \triangleright n_1}) \triangleright ({p_{2_2}} \triangleright l')))h(m_3 \otimes {p_3} \triangleright (n_2n'_1)) \\ & \hspace{2cm} \otimes {p_4} \triangleright (n_3n'_2)\\
&\stackrel{\mathclap{\eqref{act et m}}}{=}\; ({m_1} \triangleright ({p_1} \triangleright l)) ({m_{2}}\triangleright (({{p_{2}}\triangleright n_1})\triangleright ({p_{3}}\triangleright l')))h(m_3 \otimes ({p_4} \triangleright n_2) (p_5 \triangleright n'_1)) \\ & \hspace{2cm} \otimes {p_6} \triangleright (n_3n'_2)\\
&\stackrel{\mathclap{(2A4)}}{=}\; ({m_1} \triangleright ({p_1} \triangleright l)) ({m_{2}}\triangleright (({{p_{2}}\triangleright n_1}) \triangleright  ({p_{3}} \triangleright l')))h(m_3 \otimes ({p_4} \triangleright n_2)_1) \\ & \hspace{2cm}(({{p_4}\triangleright n_2)_2} \triangleright h(m_4 \otimes {p_5} \triangleright n'_1))\otimes {p_6} \triangleright (n_3n'_2)\\
&\stackrel{\mathclap{\eqref{act et delta}}}{=}\; ({m_1} \triangleright ({p_1} \triangleright l)) ({m_{2}} \triangleright (({{p_{2}} \triangleright n_1}) \triangleright ( {p_{3}} \triangleright l')))h(m_3 \otimes {p_4} \triangleright n_2)   \\
& \hspace{2cm } (({{p_5} \triangleright n_3})\triangleright h(m_4 \otimes {p_6} \triangleright n'_1))\otimes  {p_7}\triangleright (n_4n'_2)\\
&\stackrel{ \mathclap{\eqref{coco}}}{=} \; ({m_1} \triangleright ({p_1} \triangleright l))( {m_{2}}\triangleright ({({p_{2}} \triangleright n_1)_1}\triangleright ({p_{3}} \triangleright l')))h(m_3 \otimes ({p_2} \triangleright n_1)_2) \\ & \hspace{2cm}(({{p_4} \triangleright n_2})\triangleright h(m_4 \otimes {p_5} \triangleright n'_1))\otimes {p_6} \triangleright (n_3n'_2)\\
&\stackrel{\mathclap{(2A5)}}{=} \;({m_1} \triangleright ({p_1} \triangleright l)) h(m_2 \otimes ({p_2} \triangleright n_1)_1) ({({p_2} \triangleright n_1)_2} \triangleright ({m_3}\triangleright ({p_3} \triangleright l'))) \\ & \hspace{2cm}(({{p_4} \triangleright n_2}) \triangleright h(m_4 \otimes {p_5} \triangleright n'_1)) \otimes {p_6} \triangleright (n_3n'_2)\\
&\stackrel{\mathclap{\eqref{act et m}}}{=}\; ({m_1} \triangleright ({p_1} \triangleright l)) h(m_2 \otimes ({p_2} \triangleright n_1)) \\ & \hspace{2cm} \big({({p_3} \triangleright n_2)}\triangleright (({m_3}\triangleright ({p_4} \triangleright l'))h(m_4 \otimes {p_5} \triangleright n'_1))\big)\otimes {p_6} \triangleright (n_3n'_2)\\
&\stackrel{\mathclap{\eqref{act et m}}}{=} \; \Big(({m_1} \triangleright ({p_1} \triangleright l))h(m_2 \otimes {p_2} \triangleright n_1) \otimes {p_3} \triangleright n_2\Big)\\ & \hspace{2cm} \Big(({m_3} \triangleright ({p_4} \triangleright l'))h(m_4 \otimes {p_5} \triangleright n'_1) \otimes {p_6} \triangleright n'_2\Big)\\
&= ({(m_1 \otimes p_1)} \triangleright (l \otimes n))( {(m_2 \otimes p_2)} \triangleright (l' \otimes n') ).
\end{align*} 
  
In conclusion, $L \rtimes N$ is an $(M \rtimes P)$-module Hopf algebra via the linear application defined in \eqref{action C1}.

\subsection{Extension of the construction to the morphisms}\label{C_3}
Let $f \colon H \to H'$ be a morphism of $2$-fold split epimorphisms of Hopf algebras
\begin{center}
\begin{tikzpicture}[descr/.style={fill=white},scale=0.9]
\node (A) at (0,0) {$H'$};
\node (B) at (0,2) {$H$};
\node (C) at (2,2) {$N$};
\node (D) at (2,0) {$N'$};
  \path[-stealth]
 (B.south) edge node[right] {$f$} (A.north) 
 (C.south) edge node[right] {$f|_N$} (D.north);
 \path[->,font=\scriptsize]
(A.east) edge node[above] {$s'_{N'}$} (D.west)
(B.east) edge node[above] {$s_N$} (C.west);
\end{tikzpicture}
\begin{tikzpicture}[descr/.style={fill=white},scale=0.9]
\node (A) at (0,0) {$H'$};
\node (B) at (0,2) {$H$};
\node (C) at (2,2) {$M$};
\node (D) at (2,0) {$M'$};
  \path[-stealth]
 (B.south) edge node[right] {$f$} (A.north) 
 (C.south) edge node[right] {$f|_M$} (D.north);
 \path[->,font=\scriptsize]
(A.east) edge node[above] {$s'_{M'}$} (D.west)
(B.east) edge node[above] {$s_M$} (C.west);
\end{tikzpicture}
\end{center}
We construct the  morphisms of the associate Hopf $2$-actions, defined by the restrictions of $f$. For any $ p \in N \cap M,$ $ l \in HKer(s_N) \cap HKer(s_M)$, $n \in N \cap HKer(s_M)$ and $ m \in HKer(s_N) \cap M$, we have
\begin{align*}
f(p \triangleright n) &= f(p_1)f(n)f(S(p_2)) = {f(p)} \triangleright f(n), \\
f(p \triangleright l) &= f(p_1)f(l)f(S(p_2)) = {f(p)} \triangleright f(l),  \\
f(p \triangleright m) &= f(p_1)f(m)f(S(p_2)) = {f(p)} \triangleright f(m),\\
f(m \triangleright l) &= f(m_1)f(l)f(S(m_2)) = {f(m)} \triangleright f(l), \\
f(n \triangleright l) &= f(n_1)f(l)f(S(n_2)) = {f(n)} \triangleright f(l), \\
f \cdot h (n \otimes m) &= f(n_1m_1S(n_2)S(m_2)) = h'(f(n) \otimes f(m)).
\end{align*} 
  They satisfy the properties of a morphism of Hopf $2$-actions.

Conversely, if we have $(\alpha \colon L \to L', \beta \colon M \to M', \gamma \colon N \to N' , \delta \colon P \to P')$ a morphism of Hopf $2$-actions we can form $\alpha \otimes \gamma \otimes \beta \otimes \delta \colon (L \rtimes N) \rtimes (M \rtimes P) \to (L' \rtimes N') \rtimes (M' \rtimes P') $, which is a morphism of $2$-fold split epimorphisms.

First, let us check that $\alpha \otimes \gamma \otimes \beta \otimes \delta \colon (L \rtimes N) \rtimes (M \rtimes P) \to (L' \rtimes N') \rtimes (M' \rtimes P') $ is a morphism of Hopf algebras (the only non-trivial part is the respect of the multiplication), thanks to the axioms of the definition of a morphism of Hopf $2$-actions (Definition \eqref{morph hopf-2-action}): 
\begin{align*}
& (\alpha \otimes \gamma \otimes \beta \otimes \delta)((l \otimes n \otimes m \otimes p) (l' \otimes n' \otimes m' \otimes p')) \\ &= (\alpha \otimes \gamma \otimes \beta \otimes \delta)(l \Big( {n_1} \triangleright \big({m_1} \triangleright ({p_1} \triangleright l')\big) \Big)({n_2} \triangleright h(m_2 \otimes {p_2} \triangleright n'_1))  \otimes n_3({p_3} \triangleright n'_2) \otimes m_3({p_4}\triangleright m') \\ & \hspace{2cm}  \otimes p_5p')\\
&= \alpha \big( l \Big( {n_1} \triangleright \big({m_1} \triangleright ({p_1} \triangleright l')\big) \Big)({n_2} \triangleright h(m_2 \otimes {p_2} \triangleright n'_1))\big) \otimes \gamma(n_3 ({p_3} \triangleright n'_2)) \otimes \beta(m_3 ({p_4} \triangleright m')) \otimes \delta(p_5p')\\
&= \alpha( l) \Big( {\gamma(n_1)} \triangleright \big({\beta(m_1)} \triangleright ({\delta(p_1)} \triangleright \alpha(l'))\big) \Big)({\gamma(n_2)} \triangleright \alpha \cdot h(m_2 \otimes {p_2} \triangleright n'_1)) \\ & \hspace{2cm} \otimes \gamma(n_3)({\delta(p_3)} \triangleright \gamma(n'_2)) \otimes \beta(m_3)({\delta(p_4)} \triangleright \beta(m')) \otimes \delta(p_5)\delta(p')\\
&= \alpha( l) \Big( {\gamma(n_1)} \triangleright \big({\beta(m_1)} \triangleright ({\delta(p_1)} \triangleright \alpha(l'))\big) \Big)({\gamma(n_2)} \triangleright h(\beta(m_2) \otimes {\delta(p_2)} \triangleright \gamma(n'_1))) \\ & \hspace{2cm} \otimes \gamma(n_3)({\delta(p_3)} \triangleright \gamma(n'_2)) \otimes \beta(m_3)({\delta(p_4)} \triangleright \beta(m')) \otimes \delta(p_5)\delta(p')\\
&= (\alpha(l) \otimes \gamma(n) \otimes \beta(m) \otimes \delta(p))(\alpha(l') \otimes \gamma(n') \otimes \beta(m') \otimes \delta(p')).
\end{align*} 
  
This morphism of Hopf algebras is moreover a morphism of $2$-fold split epimorphisms, since the two following diagrams commute
\begin{center}
\begin{tikzpicture}[descr/.style={fill=white},scale=0.9]
\node (A) at (0,0) {$(L' \rtimes N') \rtimes (M' \rtimes P')$};
\node (B) at (0,2) {$(L \rtimes N) \rtimes (M \rtimes P)$};
\node (C) at (4,2) {$M \rtimes P$};
\node (D) at (4,0) {$M' \rtimes P'$};
  \path[-stealth]
 (B.south) edge node[descr] {$\alpha \otimes \gamma \otimes \beta \otimes \delta$} (A.north) 
 (C.south) edge node[right] {$\beta \otimes \delta$} (D.north);
 \path[->,font=\scriptsize]
(A.east) edge node[above] {$s'_1$} (D.west)
(B.east) edge node[above] {$s_1$} (C.west);
\end{tikzpicture}
\begin{tikzpicture}[descr/.style={fill=white},scale=0.9]
\node (A) at (0,0) {$(L' \rtimes N') \rtimes (M' \rtimes P')$};
\node (B) at (0,2) {$(L \rtimes N) \rtimes (M \rtimes P)$};
\node (C) at (4,2) {$N \rtimes P$};
\node (D) at (4,0) {$N' \rtimes P'$};
  \path[-stealth]
 (B.south) edge node[descr] {$\alpha \otimes \gamma \otimes \beta \otimes \delta$} (A.north) 
 (C.south) edge node[right] {$ \gamma \otimes \delta$} (D.north);
 \path[->,font=\scriptsize]
(A.east) edge node[above] {$s'_2$} (D.west)
(B.east) edge node[above] {$s_2$} (C.west);
\end{tikzpicture}
\end{center}
  as it is proven in the following equalities,
\begin{align*}
(\beta \otimes \delta) \cdot s_1 (l \otimes n \otimes m \otimes p) &= (\beta \otimes \delta)(\epsilon(l) \epsilon(n) m \otimes p) \\
&= \epsilon(l)\epsilon(n) \beta(m) \otimes \delta(p)\\
&= s'_1(\alpha(l) \otimes \gamma(n) \otimes \beta(m) \otimes \delta(p))
\end{align*} 
\begin{align*}
(\gamma \otimes \delta) \cdot s_2 (l \otimes n \otimes m \otimes p) &= (\gamma \otimes \delta)(\epsilon(l) \epsilon(m) n \otimes p) \\
&= \epsilon(l)\epsilon(m) \gamma(n) \otimes \delta(p)\\
&= s'_2(\alpha(l) \otimes \gamma(n) \otimes \beta(m) \otimes\delta(p)).
\end{align*} 

\subsection{Equivalence of categories between $\sf Act^2(Hopf_{K,coc})$ and $\sf Pt^2(Hopf_{K,coc})$}\label{C_4}
Let us consider a $2$-fold split epimorphism $(H,M,N,s_N, s_M)$, by applying the functors $\tilde{F}$ and $\tilde{G}$, we obtain the following $2$-fold split epimorphism,
\begin{align*}
&H' = (HKer(s_N) \cap HKer(s_M)) \rtimes( HKer(s_N) \cap M ) \rtimes (N \cap HKer(s_M)) \rtimes( N \cap M), \\ &M'=(HKer(s_N) \cap M) \rtimes (N \cap M), \\
&N' =(N \cap HKer(s_M)) \rtimes (N \cap M) \\
&s_1 \colon  H' \to N'\colon s_1(l \otimes n \otimes m \otimes p) = \epsilon(l)\epsilon(n) (m \otimes p) \\
&s_2 \colon  H' \to M' \colon s_2(l \otimes n \otimes m \otimes p) = \epsilon(l)\epsilon(m) (n \otimes p)
\end{align*} 

Thanks to the equivalence between modules of Hopf algebras and split epimorphisms that was recalled in Lemma \ref{iso crossed product} we have the following isomorphisms,
\begin{align*}
N'& = (N \cap HKer(s_M)) \rtimes (N \cap M) \cong N \cap H = N\\
M'& = (HKer(s_N) \cap M) \rtimes (N \cap M) \cong H \cap M = M\\
H'& = (HKer(s_N) \cap HKer(s_M)) \rtimes( HKer(s_N) \cap M ) \rtimes (N \cap HKer(s_M)) \rtimes( N \cap M)  \cong H,
\end{align*} 
  
induced by the multiplication in $H$. Moreover, the isomorphism
 \begin{equation}\label{iso split epi equi}
 \phi \colon H' \to H \colon (l \otimes n \otimes m \otimes p) \mapsto lnmp,
 \end{equation} 
 is a morphism of $2$-fold split epimorphisms, since it makes the two following diagrams commute \begin{center}
\begin{tikzpicture}[descr/.style={fill=white},scale=0.9]
\node (A) at (0,0) {$H$};
\node (B) at (0,2) {$H'$};
\node (C) at (4,2) {$N'$};
\node (D) at (4,0) {$N$};
  \path[-stealth]
 (B.south) edge node[descr] {$\phi$} (A.north) 
 (C.south) edge node[right] {$\phi|_{N'}$} (D.north);
 \path[->,font=\scriptsize]
(A.east) edge node[above] {$s_N$} (D.west)
(B.east) edge node[above] {$s_1$} (C.west);
\end{tikzpicture}
\begin{tikzpicture}[descr/.style={fill=white},scale=0.9]
\node (A) at (0,0) {$H$};
\node (B) at (0,2) {$H'$};
\node (C) at (4,2) {$M'$};
\node (D) at (4,0) {$M.$};
  \path[-stealth]
 (B.south) edge node[descr] {$\phi$} (A.north) 
 (C.south) edge node[right] {$ \phi|_{M'}$} (D.north);
 \path[->,font=\scriptsize]
(A.east) edge node[above] {$s_M$} (D.west)
(B.east) edge node[above] {$s_2$} (C.west);
\end{tikzpicture}
\end{center} Indeed, for any $l \in (HKer(s_N) \cap HKer(s_M))$, $ n \in ( HKer(s_N) \cap M )$, $m \in (N \cap HKer(s_M))$ and $p \in ( N \cap M)$ we have, thanks to the property of $HKer(s_N)$, the following equalities  
\begin{align*}
\phi|_{N'} \cdot s_1(l \otimes n \otimes m \otimes p) &= \phi|_{N'} (\epsilon(l)\epsilon(n)m \otimes p) \\
&= \epsilon(l)\epsilon(n)mp\\
&= s_N(lnmp)\\
&= s_N \cdot \phi( l \otimes n \otimes m \otimes p)
\end{align*} 
Note that similar computations as above make the right diagram commutes. 

Conversely, let $(L,M,N,P,h)$ be a Hopf $2$-action, if we apply $\tilde{G}$ and then $\tilde{F}$ we obtain the following Hopf $2$-action  
\begin{align*}
&((N \rtimes P) \cap (M \rtimes P)) \otimes (HKer(s_1) \cap HKer(s_2)) \to HKer(s_1) \cap HKer(s_2) \colon x \ox k \mapsto x_1kS(x_2),\\
&((N \rtimes P) \cap (M \rtimes P)) \otimes ((M \rtimes P) \cap HKer(s_2)) \to M \rtimes P) \cap HKer(s_2) \colon x \ox k \mapsto x_1kS(x_2),\\
&((N \rtimes P) \cap (M \rtimes P))\otimes (HKer(s_1) \cap (N \rtimes P)) \to HKer(s_1) \cap (N \rtimes P) \colon x \ox k \mapsto x_1kS(x_2),\\
&((M \rtimes P) \cap HKer(s_2)) \otimes (HKer(s_1) \cap HKer(s_2))  \to HKer(s_1) \cap HKer(s_2) \colon x \ox k \mapsto x_1kS(x_2),\\
& (HKer(s_1) \cap (N \rtimes P)) \otimes (HKer(s_1) \cap HKer(s_2))  \to HKer(s_1) \cap HKer(s_2) \colon x \ox k \mapsto x_1kS(x_2),
\end{align*} 
defined by the action of conjugation, with $h \colon ((M \rtimes P) \cap HKer(s_2)) \otimes (HKer(s_1) \cap (N \rtimes P)) \to (HKer(s_1) \cap HKer(s_2)), h(n \ox m) = n_1m_1S(n_2)S(m_2)$, where 
\begin{align*}
&s_1 \colon  (L\rtimes N)\rtimes (M \rtimes P) \to (M \rtimes P) \colon s_1(l \otimes n \otimes m \otimes p) = \epsilon(l)\epsilon(n) (m \otimes p), \\
&s_2 \colon  (L\rtimes N)\rtimes (M \rtimes P) \to (N \rtimes P) \colon s_2(l \otimes n \otimes m \otimes p) = \epsilon(l)\epsilon(m) (n \otimes p).
\end{align*} 
  
Since $s_1$ and $s_2$ are  just projections, it is obvious that \[HKer(s_1) \cong L \rtimes N\]
\[ HKer(s_2) \cong L \rtimes M.\]

The expression $(N \rtimes P)  \cap (M \rtimes P)$ stands for the intersection of the Hopf subalgebras $K \ox N \ox K \ox P $ and $K \ox K \ox M \ox P $ of $ (L\rtimes N)\rtimes (M \rtimes P)$.

More precisely, the elements in $(N \rtimes P)  \cap (M \rtimes P)$ are given by \[ \{ l \ox n \ox m \ox p \in (L\rtimes N)\rtimes (M \rtimes P) \\ \mid l \ox n \ox m \ox p \in (K \ox N \ox K \ox P ) \cap (K \ox K \ox M \ox P ) \}.\]

Let $l \ox n \ox m \ox p$ be an element in $(N \rtimes P)  \cap (M \rtimes P) $, it implies that there exists $\tilde{n} \in N$, $\hat{m} \in M$ and $\tilde{p},\hat{p} \in P$ such that \[l \ox n \ox m \ox p = 1_L \ox \tilde{n} \ox 1_M \ox \tilde{p} = 1_L \ox 1_N \ox \hat{m} \ox \hat{p}.\]
Since $1_N$ is non-zero and hence linearly independent, it can be completed to a base $\{1_N,\tilde{n}_i\}_{i\in I}$ for $N$, where $I$ is an index set whose cardinality equals the dimension of $N$. We decompose the element $ 1_L \ox \tilde{n} \ox 1_M \ox \tilde{p} $ as 
\[ 1_L \ox 1_N \ox 1_M \ox q + \sum_{i \in I} 1_L \ox \tilde{n}_i \ox 1_M \ox \tilde{p}_i, \] where the elements $q,\tilde{p}_i$ belong to $P$ for $i \in I$ (this decomposition exists and is unique). This expression  has to be equal to $1_L \ox 1_N \ox \hat{m} \ox \hat{p}$.

Since $1_N$ is linearly independent of the $\tilde{n}_i$, it follows that $\sum_{i \in I} 1_L \ox \tilde{n}_i \ox 1_M \ox \tilde{p}_i = 0$ and therefore $l \ox n \ox m \ox p = 1_L \ox \tilde{n} \ox 1_M \ox \tilde{p} =  1_L \ox 1_N \ox 1_M \ox q$.

Hence, $(N \rtimes P)  \cap (M \rtimes P) \subseteq \{ 1_L \ox 1_N \ox 1_M \ox p  \mid p \in P\} \subseteq  (L\rtimes N)\rtimes (M \rtimes P)$. The other inclusion is obvious and the canonical isomorphism between $K \ox K \ox K \ox P$ and $P$ allows us to say that there is an isomorphism of Hopf algebras between $(N \rtimes P)  \cap (M \rtimes P)$ and $P$.

Similar arguments imply that $HKer(s_1) \cap HKer(s_2) \cong L$,  $(M \rtimes P) \cap HKer(s_2) \cong M$ and $ HKer(s_1) \cap (N \rtimes P) \cong N$, via the canonical isomorphisms. It is clear that these canonical isomorphisms form a morphism of Hopf $2$-actions.

We can conclude that we have an equivalence of categories.
 
\section{Equivalence of categories between $\sf{cat}^2(\mathsf{Hopf}_{K,coc})$ and $\sf{X}^2(\mathsf{Hopf}_{K,coc})$}

   The following appendixes, \ref{D_2}, \ref{C_2}, \ref{D_3}, \ref{D_4}, \ref{D_5}, \ref{D_6}, are used in the proof of Proposition \ref{equi crossed squate 2-cat}.
 \subsection{Construction of a Hopf crossed square}\label{D_2}

From $(H,N,M,s_N,t_N,s_M,t_M)$ a cat$^2$-Hopf algebra, we construct 
\begin{equation}\label{square annex}
\begin{tikzpicture}[descr/.style={fill=white},baseline=(A.base)]
\node (X) at (0,-3) {$HKer(s_N) \cap M$};
\node (A) at (0,0) {$HKer(s_N) \cap HKer(s_M)$};
\node (B) at (5,0) {$N \cap HKer(s_M)$};
\node (C) at (5,-3) {$N \cap M,$};
\path[-stealth] 
  (A.south) edge node[right] {$t_M$}  (X.north)
  (X.east) edge node[above] {$t_N$}  (C.west)
   (A.east)  edge node[above] {$t_N$} (B.west)
(B.south) edge node[right] {$t_M$} (C.north);
\end{tikzpicture}
\end{equation}
where the modules are defined thanks to the action by conjugation and $h \colon (N \cap HKer(s_M)) \otimes (HKer(s_N) \cap M) \to HKer(s_N) \cap HKer(s_M)$ is given by  $h( n \otimes m )= n_1m_1S(n_2)S(m_2)$, as in the proof of Proposition \ref{equi action/split epi}.

As an intermediate step, we prove that $t_N(HKer(s_N) \cap HKer(s_M))$ is included in $N \cap HKer(s_M)$. Let $k$ be an element of $HKer(s_N) \cap HKer(s_M)$, then $t_N(k)$ is in $N$ and moreover $t_N(k)$ belongs to $HKer(s_M)$, since we have
\begin{align*}
(s_M \cdot t_N)(k_1) \otimes t_N(k_2) &\stackrel{(2C4)}{=} (t_N \cdot s_M)(k_1) \otimes t_N(k_2) \\
&= 1_M \otimes t_N(k).
\end{align*} 
  
The morphism $t_N \colon HKer(s_N) \cap M \to N \cap M$ is well-defined since for any element $m$ in $M$, 
\[t_N(m) \stackrel{t_M \cdot i_M = Id_M}{=} t_N \cdot t_M (m) \stackrel{(2C2)}{=}  t_M \cdot t_N (m)  \in M. \]
Same computations work for the morphisms $t_M \colon HKer(s_N) \cap HKer(s_M) \to HKer(s_N) \cap M$ and $t_M \colon N \cap HKer(s_M) \to N \cap M$ by using $(2C2)$, $(2C3)$ and $t_N \cdot i_N = Id_N$.

  Moreover, the square \eqref{square annex} commutes thanks to the compatibility condition $t_M \cdot t_N = t_N \cdot t_M$ (2C2). 
  
  Thanks to the equivalence between crossed modules and internal groupoids in $\sf{Hopf}_{K,coc}$ (Proposition \ref{equi xmod 1}), each arrow in the square \eqref{square annex} is a crossed module, which gives the condition $(CS1)$.  Moreover, $t_N \colon HKer(s_N) \cap HKer(s_M) \to N \cap HKer(s_M)$ and $t_M \colon HKer(s_N) \cap HKer(s_M) \to HKer(s_N) \cap M $ are $(N \cap M )$-module morphisms $(CS2)$,  since for $p \in N \cap M$ and $l \in HKer(s_N) \cap HKer(s_M)$,  we have 
\begin{align*}
t_N(p \triangleright l) &= t_N(p_1lS(p_2))\\
&= t_N(p_1)t_N(l)t_N(S(p_2))\\
&= p_1t_N(l)S(p_2)\\
&= p \triangleright t_N(l),
\end{align*} 
since $t_N \cdot i_N = Id_N$, and a similar computation holds for $t_M$. 

The condition $(CS3)$ is satisfied since it is the condition $(2A2)$ of the definition of Hopf $2$-action.

The condition $(CS4)$ holds for $t_M$ (and similarly for $t_N$) thanks to $t_M \cdot i_M = Id_M$, since for $n$ an element of $N \cap HKer(s_M)$ and $m$ an element of $HKer(s_N) \cap M$, we have the following equalities,
\begin{align*}
t_M \cdot h(n \otimes m) &= t_M(n_1m_1S(n_2)S(m_2)) \\
&= t_M(n_1)m_1t_M(S(n_2))S(m_2)\\
&= ({t_M(n)} \triangleright m_1)S(m_2).
\end{align*} 

In order to obtain the conditions $(CS5)$ and $(CS6)$, we show that thanks the following equalities $[Hker(s_N),HKer(t_N)] = 0$ and $[HKer(s_M),HKer(t_M)] = 0$, in the definition of a cat$^2$-Hopf algebras, we have
\begin{equation}\label{aide'}
{t_M(n)} \triangleright k = n \triangleright k ,
\end{equation} 
\begin{equation} \label{aide}
{t_N(m)} \triangleright k = m \triangleright k,
\end{equation}
 for any $n \in N \cap HKer(s_M)$, $m \in HKer(s_N) \cap M$ and $k \in HKer(s_N)  \cap HKer(s_M)$. 

First, we can check that $t_M(S(n_1))n_2$ belongs to $HKer(t_M)$ for any $n \in N \cap HKer(s_M)$. Indeed, since $t_M$ is split ($t_M \cdot i_M = Id_M$) it implies that  $t_M \cdot t_M = t_M$, and we have the following equalities
\begin{align*} 
t_M(t_M(S(n_1))n_2)_1 \ox  (t_M (S(n_1))n_2)_2 &= t_M (t_M(S(n_2))n_3) \ox t_M (S(n_1))n_4 \\
&= (t_M \cdot t_M)(S(n_2)) t_M (n_3) \ox t_M (S(n_1))n_4\\ 
&= 1_M \ox  t_M(S(n_1))n_2.
\end{align*} 
   
Thanks to this observation we can use that  $[HKer(s_M),HKer(t_M)]=0$ to obtain our equality for $n \in N \cap HKer(s_M)$ and $k \in HKer(s_N)  \cap HKer(s_M)$,  
\begin{align*} {t_M(n)} \triangleright k
&= t_M(n_1)kt_M(S(n_2)) \\
&= t_M(n_1)k\left(t_M(S(n_2))n_3\right)S(n_4) \\
&= t_M(n_1)\left(t_M(S(n_2))n_3\right)kS(n_4) \\
&= n_1kS(n_2) \\
&=   n \triangleright k.
\end{align*} 
   And similarly for $m \in HKer(s_N) \cap M$ and $k \in HKer(s_N)  \cap HKer(s_M)$, we can show \eqref{aide}.

Thanks to \eqref{aide'}, \eqref{aide} and the fact that $[HKer(s_N),HKer(t_N)]=0$, we obtain the condition (CS5): for $l \in HKer(s_N) \cap HKer(s_M)$ and $m \in HKers(s_N) \cap M$, we have
\begin{align*}
h(t_N(l) \otimes m) &= t_N(l_1)m_1t_N(S(l_2))S(m_2)\\
&= t_N(l_1)m_1t_N(S(l_2))l_3S(l_4)S(m_2)\\
&= t_N(l_1)\big(t_N(S(l_2))l_3\big)m_1S(l_4)S(m_2)\\
&= l_1m_1S(l_2)S(m_2)\\
&= l_1({m} \triangleright S(l_2)) \\
&\stackrel{\eqref{aide}}{=} l_1({t_N(m)} \triangleright S(l_2)).
\end{align*} 
  
Similarly, for $l \in HKer(s_N) \cap HKer(s_M)$ and $n \in N \cap HKers(s_M)$, by using the fact that $[HKer(s_M),HKer(t_M)]=0$, we can show that $({t_M(n)} \triangleright l_1)S(l_2) = h(n  \otimes t_M(l)) $.

Moreover, thanks to (2A4) of Definition \ref{2-action}, \eqref{aide'} and \eqref{aide}, we obtain the condition $(CS6)$ for this construction, and we can conclude that the square \eqref{square annex} is a Hopf crossed square.

\subsection{Isomorphisms of Hopf algebras}\label{C_2}
We check that the following linear maps \[\psi \colon (L\rtimes N)\rtimes (M \rtimes P) \to (L\rtimes M)\rtimes (N \rtimes P) \colon (l \otimes n \otimes m \otimes p) \mapsto lS(h(m_1 \otimes n_1)) \otimes m_2 \otimes n_2 \otimes p,\]
\[\psi^{-1} \colon (L\rtimes M)\rtimes (N \rtimes P) \to (L\rtimes N)\rtimes (M \rtimes P) \colon (l \otimes m \otimes n \otimes p) \mapsto lh(m_1 \otimes n_1) \otimes n_2 \otimes m_2 \otimes p,\]  are isomorphisms in $\sf{Hopf}_{K,coc}$. On the one hand we obtain, for $l,l' \in L$, $m,m' \in M$, $n,n' \in N$ and $p,p' \in P$
\begin{align*}
& \psi((l \otimes n \otimes m \otimes p) (l' \otimes n' \otimes m' \otimes p')) \\ &\stackrel{\; \; \; \; \; }{=} \psi (l \Big( {n_1} \triangleright \big({m_1} \triangleright ({p_1} \triangleright l')\big) \Big)({n_2} \triangleright h(m_2 \otimes {p_2} \triangleright n'_1)) \otimes n_3({p_3} \triangleright n'_2) \otimes m_3({p_4}\triangleright m') \otimes p_5p')\\
&\stackrel{\; \; \; \; \; }{=} l \Big( {n_1} \triangleright \big({m_1} \triangleright ({p_1} \triangleright l')\big) \Big)({n_2} \triangleright h(m_2 \otimes {p_2} \triangleright n'_1))S(h( (m_3({p_4} \triangleright m'))_1 \otimes  (n_3({p_3} \triangleright n'_2))_1)  \otimes  \\ & \hspace{2cm} (m_3 ({p_4} \triangleright m'))_2 \otimes (n_3({p_3} \triangleright n'_2))_2\otimes p_5p'\\
&\stackrel{\eqref{act et delta}}{=}l \Big( {n_1} \triangleright \big({m_1} \triangleright ({p_1} \triangleright l')\big) \Big)({n_2} \triangleright h(m_2 \otimes {p_2} \triangleright n'_1))S(h( m_3 ({p_5} \triangleright m'_1) \otimes  n_3 ({p_3} \triangleright n'_2)))  \otimes  \\ & \hspace{2cm} m_4 ({p_6} \triangleright m'_2) \otimes n_4({p_4} \triangleright n'_3)\otimes p_7p'\\
&\stackrel{(2A4)}{=} l \Big( {n_1} \triangleright \big({m_1} \triangleright ({p_1} \triangleright l')\big) \Big)({n_2} \triangleright h(m_2 \otimes {p_2} \triangleright n'_1))S\big(({m_3} \triangleright h({p_6} \triangleright m'_1 \otimes  n_3({p_3} \triangleright n'_2)))\\ & \hspace{2cm}h(m_4 \otimes   n_4({p_4} \triangleright n'_3))\big)\otimes m_5({p_7} \triangleright m'_2)  \otimes n_5({p_5} \triangleright n'_4)\otimes p_8p'\\
&\stackrel{(2A4)}{=} l \Big( {n_1} \triangleright \big({m_1} \triangleright ({p_1} \triangleright l')\big) \Big)({n_2} \triangleright h(m_2 \otimes {p_2} \triangleright n'_1))S\big(({m_3} \triangleright h({p_6} \triangleright m'_1 \otimes  n_3 ({p_3} \triangleright n'_2)))\\ & \hspace{2cm}h(m_4 \otimes  n_4)({n_5} \triangleright h(m_5 \otimes{p_4} \triangleright n'_3))\big) \ \otimes  m_6({p_7} \triangleright m'_2) \otimes n_6 ({p_5} \triangleright n'_4) \otimes p_8p'\\
&\stackrel{ \eqref{coco}}{=}  l \Big( {n_1} \triangleright \big({m_1} \triangleright ({p_1} \triangleright l')\big) \Big)({n_2} \triangleright h(m_2 \otimes {p_2} \triangleright n'_1))({n_3} \triangleright S(h(m_3 \otimes{p_3} \triangleright n'_2))) \\ & \hspace{2cm}  S\big({m_4} \triangleright h({p_5} \triangleright m'_1 \otimes  n_6 ({p_4} \triangleright n'_3))h(m_5 \otimes  n_4)\big)\otimes   m_6({p_6} \triangleright m'_2)  \otimes n_5 ({p_7} \triangleright n'_4) \otimes p_8p'\\
&\stackrel{ \eqref{act et m}}{=}  l \Big( {n_1} \triangleright \big({m_1} \triangleright ({p_1} \triangleright l')\big) \Big)({n_2} \triangleright (h(m_2 \otimes {p_2} \triangleright n'_1)S(h(m_3 \otimes{p_3} \triangleright n'_2)))) \\ & \hspace{2cm}  S({m_4} \triangleright h({p_5} \triangleright m'_1 \otimes  n_5 ({p_4} \triangleright n'_3))h(m_5 \otimes  n_3))\otimes   m_6({p_6} \triangleright m'_2)  \otimes n_4 ({p_7} \triangleright n'_4) \otimes p_8p'\\
&\stackrel{\; \; \;\; \; }{=}  l \Big( {n_1} \triangleright \big({m_1} \triangleright ({p_1} \triangleright l')\big) \Big) S\left({m_2} \triangleright (h({p_3} \triangleright m'_1 \otimes  n_4 ({p_2} \triangleright n'_1))h(m_3 \otimes  n_2))\right) \\ & \hspace{2cm} \otimes   m_4({p_4} \triangleright m'_2)  \otimes n_3 ({p_5} \triangleright n'_2) \otimes p_6p'\\
&\stackrel{ \eqref{coco}}{=} l \Big( {n_1} \triangleright \big({m_1} \triangleright ({p_1} \triangleright l')\big) \Big)S(h(m_2 \otimes  n_2))S(({m_3} \triangleright h({p_2} \triangleright m'_1 \otimes  n_3({p_3} \triangleright n'_1)))) \otimes  m_4({p_4} \triangleright m'_2) \\ & \hspace{2cm} \otimes n_4({p_5} \triangleright n'_2) \otimes p_6p'
\end{align*} 
and on the other hand, by using the properties of $(L,N,M,P,S \cdot h \cdot \sigma)$ we have 
\begin{align*}
& \psi(l \otimes n \otimes m \otimes p) \psi (l' \otimes n' \otimes m' \otimes p') \\ &= (lS(h(m_1 \otimes n_1)) \otimes m_2 \otimes n_2 \otimes p)(l'S(h(m'_1 \otimes n'_1)) \otimes m'_2 \otimes n'_2 \otimes p') \\
&= lS(h(m_1 \otimes n_1))({m_2} \triangleright({n_2} \triangleright ({p_1}\triangleright l'S(h(m'_1 \otimes n'_1)))))\\ & \hspace{1cm} ({m_3} \triangleright S(h({p_2} \triangleright m'_2 \otimes n_3))) \otimes  m_4({p_3}\triangleright m'_3)\otimes  n_4({p_4} \triangleright n'_2) \otimes p_5p'\\
&\stackrel{\mathclap{\eqref{act et m}}}{=} \;  lS(h(m_1 \otimes n_1))({m_2} \triangleright ({n_2} \triangleright ({p_1} \triangleright l')))\\ & \hspace{1cm} ({m_3} \triangleright ({n_3} \triangleright ({p_2} \triangleright S(h(m'_1 \otimes n'_1)))))  ( {m_4}\triangleright  S(h({p_3} \triangleright m'_2 \otimes n_4))  \\ & \hspace{1cm} \otimes  m_5({p_4}\triangleright m'_3)  \otimes n_5({p_5} \triangleright n'_2) \otimes p_6p'\\
&\stackrel{\mathclap{(2A5) + (2A2)}}{=} \; \; \;  \; \; \;\;  l({n_1} \triangleright ({m_1} \triangleright ({p_1} \triangleright l')))S(h(m_2 \otimes n_2)) \\ & \hspace{1cm} ({m_3} \triangleright ({n_3} \triangleright (S(h({p_2} \triangleright m'_1 \otimes {p_3} \triangleright n'_1))))  ({m_4}\triangleright S(h({p_4} \triangleright m'_2 \otimes n_4))) \\ & \hspace{1cm} \otimes m_5({p_5} \triangleright m'_3)\otimes  n_5 ({p_6}\triangleright n'_2) \otimes p_7p'\\
&\stackrel{\mathclap{\eqref{act et m}}}{=} \;l ({n_1} \triangleright ({m_1} \triangleright ({p_1} \triangleright l')))S(h(m_2 \otimes n_2))\Big({m_3} \triangleright ({n_3} \triangleright S(h({p_2} \triangleright m'_1 \otimes {p_3} \triangleright n'_1)) S(h({p_4} \triangleright m'_2 \otimes n_4))\Big) \\ & \hspace{1cm}  \otimes m_4{(p_5} \triangleright m'_3)\otimes  n_5 ({p_6} \triangleright n'_2) \otimes p_7p'\\
&\stackrel{(2A4)}{=} l({n_1} \triangleright ({m_1} \triangleright ({p_1} \triangleright l')))S(h(m_2 \otimes n_2)) ({m_3} \triangleright S(h({p_2} \triangleright m'_1 \otimes n_3 ({p_3} \triangleright n'_1))))     \otimes m_4({p_4}\triangleright m'_2)\\ & \hspace{1cm}\otimes  n_4({p_5} \triangleright n'_2) \otimes p_6p'.
\end{align*} 
  Thus, the linear map $\psi$ is a morphism of Hopf algebras between $ (L\rtimes N)\rtimes (M \rtimes P) $ and $ (L\rtimes M)\rtimes (N \rtimes P) $, since it is a coalgebra morphism thanks to the cocommutativity.
Moreover, it is an isomorphism as we can check as follows 
\begin{align*}
\psi \cdot \psi^{-1} (l \ox n \ox m \ox p ) &= \psi (lh(m_1 \ox n_1) \ox m_2 \ox n_1 \ox p) \\
&= lh(m_1 \ox n_1)S(h(m_{2_1} \ox n_{2_1})) \ox n_{2_2} \ox m_{2_2} \ox p \\&
 = lh(m_1 \ox n_1)_1S(h(m_{1} \ox n_1)_2) \ox n_{2} \ox m_{2} \ox p \\
 &= l \ox n \ox m \ox p,
\end{align*}
and, thanks to similar computations, the identity $\psi^{-1} \cdot \psi = Id_{(L \rtimes M) \rtimes (N \rtimes P)}$ holds. 

 \subsection{ The triples $(M \rtimes P, L \rtimes N, d)$ and $(N \rtimes P, L \rtimes M, d')$ are crossed modules}\label{D_3}
  First we check that $d \colon L \rtimes N \to M \rtimes P$ defined by $d (l \otimes n) = \lambda(l) \otimes \nu(n)$ is a morphism of algebras thanks to the fact that $\lambda$ is $P$-linear. For $l$, $l'$ $\in L$, and $n$, $n'$ $\in N$, we have the following equalities
\begin{align*}
d((l \otimes n)(l' \otimes n')) &= d(l({\nu(n_1)} \triangleright l') \otimes n_2n')\\
&= \lambda(l)\lambda({\nu(n_1)} \triangleright l') \otimes \nu(n_2n') \\
&= \lambda(l)({\nu(n_1)} \triangleright \lambda(l')) \otimes \nu(n_2)\nu(n')\\
&= (\lambda(l) \otimes \nu(n))(\lambda(l') \otimes \nu(n'))\\
&=d(l \otimes n)d(l' \otimes n').
\end{align*} 
  
Moreover, this morphism satisfies the two needed properties to be a crossed module, since for any $m \in M$, $p \in P$, $l,l' \in L$ and $n,n' \in N$, on the one hand, we have
\begin{align*}
d({(m \otimes p)}\triangleright (l \otimes n)) &= d(({\mu(m_1)p_1} \triangleright l) h(m_2 \otimes {p_2} \triangleright n_1) \otimes {p_3} \triangleright n_2) \\
&= \lambda(({\mu(m_1)p_1} \triangleright l)h(m_2 \otimes {p_2} \triangleright n_1)) \otimes \nu( {p_3} \triangleright n_2)\\
&\stackrel{\mathclap{\eqref{m et act}(CS2)}}{=}  \; \; \; \; \; ({\mu(m_1)} \triangleright ({p_1} \triangleright \lambda(l)))\lambda(h(m_2 \otimes {p_2} \triangleright n_1)) \otimes \nu( {p_3} \triangleright n_2)\\
&\stackrel{\mathclap{(CM2)}}{=}\;  \; m_1({p_1} \triangleright \lambda(l))S(m_2)\lambda(h(m_3 \otimes {p_2} \triangleright n_1)) \otimes \nu( {p_3} \triangleright n_2)\\
&\stackrel{\mathclap{(CS4)}}{=}  \;  \; m_1({p_1} \triangleright \lambda(l))S(m_2)m_3({\nu({p_2} \triangleright n_1)}\triangleright S(m_4)) \otimes \nu({p_3} \triangleright n_2)\\
&\stackrel{\mathclap{(CM1)}}{=} \;  \; m_1({p_1} \triangleright \lambda(l)) ({p_2\nu(n_1)S(p_3)} \triangleright S(m_2)) \otimes p_4\nu(n_2)S(p_5)\\
&\stackrel{\mathclap{\eqref{m et act}}}{=} \; (m_1 ({p_1} \triangleright \lambda(l)) \otimes p_2\nu(n)) ({S(p_3)} \triangleright S(m_2) \otimes S(p_4)) \\
&=  (m_1 \otimes p_1)(\lambda(l) \otimes \nu(n)) ({S(p_2)} \triangleright S(m_2) \otimes S(p_3)) \\
&= (m_1 \otimes p_1)d(l \otimes n) S(m_2 \otimes p_2) ,
\end{align*} 
  
  and on the other hand, we have
\begin{align*}
{d(l \otimes n)} \triangleright (l' \otimes n') &= {(\lambda(l) \otimes \nu(n))} \triangleright (l' \otimes n')\\
&= ({\mu \cdot \lambda (l_1) \triangleright (\nu(n_1)} \triangleright l')) h(\lambda(l_2) \otimes {\nu(n_2)} \triangleright n'_1) \otimes {\nu(n_3)} \triangleright n'_2\\
&\stackrel{\mathclap{(CS5)}}{=} \; \;  ({\mu \cdot \lambda (l_1)\triangleright (\nu(n_1)} \triangleright l')) l_2 ({\nu({\nu(n_2)} \triangleright n_1')} \triangleright S(l_3))\otimes {\nu(n_3)} \triangleright n_2'\\
&\stackrel{\mathclap{(CM2)}}{=}  \; \;  l_1({\nu(n_1)}\triangleright l')S(l_2) l_3 ({\nu({\nu(n_2)} \triangleright n_1')} \triangleright S(l_4))\otimes {\nu(n_3)} \triangleright n_2'\\
&\stackrel{\mathclap{(CM1)}}{=} \; \;  l_1({\nu(n_1)} \triangleright l)'\big({\nu(n_2)\nu(n'_1)}{S(\nu(n_3))} \triangleright S(l_2)\big) \otimes n_4n'_2S(n_5)\\
&\stackrel{\mathclap{\eqref{m et act}}}{=} \; l_1({\nu(n_1)} \triangleright l)'({\nu(n_2n'_1)}\triangleright ({S(\nu(n_3)} \triangleright S(l_2))) \otimes n_4n'_2S(n_5)\\
&=(l_1({\nu(n_1)} \triangleright l') \otimes n_2n') ({S(\nu(n_3)} \triangleright S(l_2)) \otimes S(n_4))\\
&=(l_1 \otimes n_1)(l' \otimes n') ({S(\nu(n_2)} \triangleright S(l_2)) \otimes S(n_3))\\
&=(l_1 \otimes n_1)(l' \otimes n') S(l_2 \otimes n_2).
\end{align*} 
   
We obtain the same conclusion for $d'$ with similar computations. Hence, we have two Hopf crossed modules.
 
 \subsection{Conditions of compatibility of the cat$^2$-Hopf algebra}\label{D_4}
 We check that the morphisms defined by  
\begin{align*}
&s_1 \colon  (L\rtimes N)\rtimes (M \rtimes P) \to (M \rtimes P) \colon s_1(l \otimes n \otimes m \otimes p) = \epsilon(l)\epsilon(n) (m \otimes p), \\
&t_1 \colon  (L\rtimes N)\rtimes (M \rtimes P) \to (M \rtimes P) \colon t_1(l \otimes n \otimes m \otimes p) = \lambda(l)({\nu(n_1)} \triangleright m ) \otimes \nu(n_2)p,\\
&s_2 \colon  (L\rtimes N)\rtimes (M \rtimes P) \to (N \rtimes P) \colon s_2(l \otimes n \otimes m \otimes p) = \epsilon(l)\epsilon(m) (n \otimes p),\\
&t_2 \colon  (L\rtimes N)\rtimes (M \rtimes P) \to (N \rtimes P) \colon t_2(l \otimes n \otimes m \otimes p) =  \lambda'(l)n \otimes \mu(m)p,
\end{align*} 
    satisfy the compatibility conditions of a cat$^2$-Hopf algebra. Let $(l \otimes n \otimes m \otimes p)$ be an element in $(L\rtimes N)\rtimes (M \rtimes P)$, we have the following identities 
\begin{align*}
t_1 \cdot t_2(l \otimes n \otimes m \otimes p) &= t_1(1_L \otimes \lambda'(l)n \otimes 1_M \otimes \mu(m)p)\\
&= 1_L \ox 1_N \ox  \lambda(1_L)(({\nu((\lambda'(l)n)_1}) \triangleright 1_M) \otimes \nu((\lambda'(l)n)_2)\mu(m)p\\
&\stackrel{\mathclap{\eqref{act et 1}}}{=} \; 1_L \ox 1_N \ox 1_M \otimes \nu\cdot \lambda' (l) \nu(n) \mu(m)p\\
&= 1_L \ox 1_N \ox 1_M \otimes \mu\cdot \lambda (l) \nu(n) \mu(m)p\\
&\stackrel{\mathclap{(CM2)}}{=} \; \; 1_L \ox \lambda'(1_L)1_N \ox 1_M \otimes \mu(\lambda(l)({\nu(n_1)} \triangleright m))\nu(n_2)p\\
&=t_2( 1_L \otimes 1_N \otimes \lambda(l)({\nu(n_1)} \triangleright m) \otimes \nu(n_2)p)\\
&= t_2\cdot t_1(l \otimes n \otimes m \otimes p)
\end{align*} 
  
\begin{align*}
s_1 \cdot t_2(l \otimes n \otimes m \otimes p) &= s_1(1_L \otimes \lambda'(l)n \otimes 1_M \otimes \mu(m)p)\\
&= 1_L \ox 1_N \ox \epsilon(l)\epsilon(n) 1_M \otimes \mu(m)p\\
&=t_2(\epsilon(l)\epsilon(n) 1_L\otimes 1_N \otimes m \otimes p)\\
&= t_2 \cdot s_1 (l \otimes n \otimes m \otimes p) 
\end{align*} 
  
\begin{align*}
t_1 \cdot s_2(l \otimes n \otimes m \otimes p) &= t_1(\epsilon(l)\epsilon(m) 1_L \otimes n \otimes 1_M \otimes p)\\
&= \epsilon(l)\epsilon(m) 1_L \ox 1_N \ox ({\nu(n_1)} \triangleright 1_M) \otimes \nu(n_2)p\\
&\stackrel{\mathclap{\eqref{act et 1}}}{=} \; \epsilon(l)\epsilon(m) 1_L \otimes 1_N \ox 1_M \ox \nu(n)p\\
&=s_2(1_L \otimes 1_N \otimes \lambda(l)({\nu(n_1)} \triangleright m ) \otimes \nu(n_2)p)\\
&= s_2 \cdot t_1 (l \otimes n \otimes m \otimes p) .
\end{align*} 
  
Hence, the 7-tuple $((L \rtimes N ) \rtimes (M \rtimes P), M\rtimes P, N \rtimes P, s_1, t_1,s_2,t_2)$ is a cat$^2$-Hopf algebra.
\subsection{Extension of the functors to the morphisms }\label{D_5}
 Let $f$ be a morphism of cat$^2$-Hopf algebras \begin{center}
\begin{tikzpicture}[descr/.style={fill=white},scale=0.9]
\node (A) at (0,0) {$H'$};
\node (B) at (0,2) {$H$};
\node (C) at (2,2) {$N$};
\node (D) at (2,0) {$N'$};
  \path[-stealth]
 (B.south) edge node[right] {$f$} (A.north) 
 (C.south) edge node[right] {$f|_N$} (D.north);
 \path[->,font=\scriptsize]
([yshift=4pt]A.east) edge node[above] {$s'_{N'}$} ([yshift=4pt]D.west)
([yshift=-4pt]A.east) edge node[below] {$t'_{N'}$} ([yshift=-4pt]D.west)
([yshift=4pt]B.east) edge node[above] {$s_N$} ([yshift=4pt]C.west)
([yshift=-4pt]B.east) edge node[below] {$t_N$} ([yshift=-4pt]C.west);
\end{tikzpicture}
\begin{tikzpicture}[descr/.style={fill=white},scale=0.9]
\node (A) at (0,0) {$H'$};
\node (B) at (0,2) {$H$};
\node (C) at (2,2) {$M$};
\node (D) at (2,0) {$M'.$};
  \path[-stealth]
 (B.south) edge node[right] {$f$} (A.north) 
 (C.south) edge node[right] {$f|_M$} (D.north);
 \path[->,font=\scriptsize]
([yshift=4pt]A.east) edge node[above] {$s'_{M'}$} ([yshift=4pt]D.west)
([yshift=-4pt]A.east) edge node[below] {$t'_{M'}$} ([yshift=-4pt]D.west)
([yshift=4pt]B.east) edge node[above] {$s_M$} ([yshift=4pt]C.west)
([yshift=-4pt]B.east) edge node[below] {$t_M$} ([yshift=-4pt]C.west);
\end{tikzpicture}
\end{center} We already know that from a morphism of cat$^2$-Hopf algebras we can obtain a morphism of Hopf $2$-actions (See Appendix \ref{C_3}). Moreover, the following cube commutes since $f|_N \cdot t_N = t_{N'} \cdot f $, $f|_M \cdot t_M = t_M \cdot f $ 

\begin{center}
\begin{tikzpicture}[descr/.style={fill=white},baseline=(A.base)]
\node (X) at (0,-5) {$HKer(s_N) \cap M$};
\node (A) at (0,0) {$HKer(s_N) \cap HKer(s_M)$};
\node (B) at (5,0) {$N \cap HKer(s_M)$};
\node (C) at (5,-5) {$N \cap M.$};
\node (X') at (2.5,-2.5) {$HKer(s'_{N'}) \cap M'$};
\node (A') at (2.5,2.5) {$HKer(s'_{N'}) \cap HKer(s'_{M'})$};
\node (B') at (7.5,2.5) {$N' \cap HKer(s'_{M'})$};
\node (C') at (7.5,-2.5) {$N' \cap M'$};
\path[-stealth] 
([xshift=2mm]X.north) edge node[descr] {$f_{HKer(s_N) \cap M}$} (X'.south)
([xshift=2mm]C.north) edge node[descr] {$f_{N \cap M}$} (C'.south)

  (B.north) edge node[descr] {$f_{N \cap HKer(s_M)}$} ([xshift=-2mm]B'.south)
(A'.south) edge node[below,xshift=1mm,yshift=-3mm] {$\; \; \; \; \; t'_{M'}$} (X'.north)
  (X'.east) edge node[above] {$\; \; \; \; \; \; \; \; t'_{N'}$}(C'.west)
   (A'.east)  edge node[above] {$ t'_{N'} $} (B'.west)
(B'.south) edge node[right] {$ t'_{M'}$} (C'.north)  
  (A.south) edge node[right] {$t_M $}  (X.north)
  (X.east) edge node[above] {$t_N$}  (C.west)
   (A.east)  edge node[above] {$\; \; \; \; \; \; t_N$} (B.west) 
   (A.north) edge node[descr]{$f_{HKer(s_N) \cap HKer(s_M)}$}([xshift=-2mm]A'.south)
(B.south) edge node[descr] {$t_M$} (C.north);
\end{tikzpicture}
\end{center}

Conversely, if we consider a morphism of Hopf  crossed squares $(\alpha, \beta, \gamma, \delta)$
\begin{center}
\begin{tikzpicture}[descr/.style={fill=white},baseline=(A.base)]
\node (X) at (0,-3) {$N$};
\node (A) at (0,0) {$L$};
\node (B) at (3,0) {$M$};
\node (C) at (3,-3) {$P,$};
\node (X') at (1.5,-1.5) {$\hat{N}$};
\node (A') at (1.5,1.5) {$\hat{L}$};
\node (B') at (4.5,1.5) {$\hat{M}$};
\node (C') at (4.5,-1.5) {$\hat{P}$};
\path[-stealth] 
(X.north east) edge node[above] {$\gamma$} (X'.south west)
(C.north east) edge node[above] {$\delta$} (C'.south west)
 (A.north east) edge node[above]{$\alpha$}(A'.south west)
  (B.north east) edge node[above] {$\beta$} (B'.south west)
(A'.south) edge node[below] {$\hat{\lambda}'\; \; \; \; \;$} (X'.north)
  (X'.east) edge node[above] {$\; \; \; \; \; \; \; \; \hat{\nu}$}(C'.west)
   (A'.east)  edge node[above] {$\hat{\lambda}$} (B'.west)
(B'.south) edge node[right] {$\hat{\mu} $} (C'.north)  
  (A.south) edge node[left] {$\lambda' $}  (X.north)
  (X.east) edge node[above] {$\nu $}  (C.west)
   (A.east)  edge node[above] {$\; \; \; \; \; \; \lambda$} (B.west)
(B.south) edge node[below] {$\mu \; \; \; \;$} (C.north);
\end{tikzpicture}
\end{center} the morphism of $2$-fold split epimorphisms $\alpha \ox \beta \ox \gamma \ox \delta$ obtained in \ref{C_3} is, in particular, a morphism of cat$^2$-Hopf algebras \begin{center}
\begin{tikzpicture}[descr/.style={fill=white},scale=0.9]
\node (A) at (0,0) {$(L' \rtimes N') \rtimes (M' \rtimes P')$};
\node (B) at (0,2) {$(L \rtimes N) \rtimes (M \rtimes P)$};
\node (C) at (4,2) {$M \rtimes P$};
\node (D) at (4,0) {$M' \rtimes P'$};
  \path[-stealth]
 (B.south) edge node[descr] {$\alpha \otimes \gamma \otimes \beta \otimes \delta$} (A.north) 
 (C.south) edge node[right] {$\beta \otimes \delta$} (D.north);
 \path[->,font=\scriptsize]
([yshift=4pt]A.east) edge node[above] {$s'_1$} ([yshift=4pt]D.west)
([yshift=-4pt]A.east) edge node[below] {$t'_1$} ([yshift=-4pt]D.west)
([yshift=4pt]B.east) edge node[above] {$s_1$} ([yshift=4pt]C.west)
([yshift=-4pt]B.east) edge node[below] {$t_1$} ([yshift=-4pt]C.west);
\end{tikzpicture}
\begin{tikzpicture}[descr/.style={fill=white},scale=0.9]
\node (A) at (0,0) {$(L' \rtimes N') \rtimes (M' \rtimes P')$};
\node (B) at (0,2) {$(L \rtimes N) \rtimes (M \rtimes P)$};
\node (C) at (4,2) {$N \rtimes P$};
\node (D) at (4,0) {$N' \rtimes P'.$};
  \path[-stealth]
 (B.south) edge node[descr] {$\alpha \otimes \gamma \otimes \beta \otimes \delta$} (A.north) 
 (C.south) edge node[right] {$ \gamma \otimes \delta$} (D.north);
 \path[->,font=\scriptsize]
([yshift=4pt]A.east) edge node[above] {$s'_2$} ([yshift=4pt]D.west)
([yshift=-4pt]A.east) edge node[below] {$t'_2$} ([yshift=-4pt]D.west)
([yshift=4pt]B.east) edge node[above] {$s_2$} ([yshift=4pt]C.west)
([yshift=-4pt]B.east) edge node[below] {$t_2$} ([yshift=-4pt]C.west);
\end{tikzpicture}
\end{center}
Indeed, we just need that the squares involving t commute, which is checked for any $l \ox n \ox m \ox p \in (L \rtimes N) \rtimes (M \rtimes P)$, thanks to the following equalities
\begin{align*}
(\beta \otimes \delta) \cdot t_1 (l \otimes n \otimes m \otimes p) &= (\beta \otimes \delta)(\lambda(l)({\nu(n_1)} \triangleright m) \otimes \nu(n_2)p) \\
&= \beta(\lambda(l)({\nu(n_1)} \triangleright m)) \otimes \delta(\nu(n_2)p) \\
&= \beta \cdot \lambda(l)\beta({\nu(n_1)} \triangleright m) \otimes \delta(\nu(n_2)p) \\
&= \hat{\lambda} \cdot \alpha(l) ({\hat{\nu}\cdot \gamma (n_1)} \triangleright \beta(m)) \otimes \hat{\nu}\cdot \gamma (n_2)\delta(p)\\
&=  t'_1  \cdot (\alpha \otimes \gamma \otimes \beta \otimes \delta)(l \otimes n \otimes m \otimes p),
\end{align*} 
   
\begin{align*}
(\gamma \otimes \delta) \cdot t_2 (l \otimes n \otimes m \otimes p) &= (\gamma \otimes \delta)(\lambda'(l)n \otimes \mu(m)p) \\
&= \gamma \cdot \lambda'(l) \gamma(n) \otimes \delta \cdot  \mu(m) \delta(p) \\
&= \hat{ \lambda'} \cdot \alpha(l) \gamma(n) \otimes \hat{\mu} \cdot  \beta(m) \delta(p) \\
&=  t'_2  \cdot (\alpha \otimes \gamma \otimes \beta \otimes \delta)(l \otimes n \otimes m \otimes p).
\end{align*} 
  
Hence, we can conclude that this is a morphism of cat$^2$-Hopf algebras and then the definition of $\hat{F}$ and $\hat{G}$ naturally extents to the morphisms.

\subsection{Equivalence between the categories $\sf X^2(Hopf_{K,coc})$ and $\sf cat^2(Hopf_{K,coc})$  }\label{D_6}
On the one hand, let us consider a cat$^2$-Hopf algebra $(H,M,N,s_N, t_N, s_M, t_M)$, by applying the functors $\hat{F}$ and $\hat{G}$, we obtain the following  cat$^2$-Hopf algebra,
\begin{align*}
&H' = (HKer(s_N) \cap HKer(s_M)) \rtimes( HKer(s_N) \cap M ) \rtimes (N \cap HKer(s_M)) \rtimes( N \cap M), \\ &M'=(HKer(s_N) \cap M) \rtimes (N \cap M), \\
&N' =(N \cap HKer(s_M)) \rtimes (N \cap M) \\
&s_1 \colon  H' \to N'\colon s_1(l \otimes n \otimes m \otimes p) = \epsilon(l)\epsilon(n) (m \otimes p) \\
&t_1 \colon  H' \to N'\colon t_1(l \otimes n \otimes m \otimes p) = t_N(l)t_N(n_1)mt_N(S(n_2)) \otimes t_N(n_3)p \\
&s_2 \colon  H' \to M' \colon s_2(l \otimes n \otimes m \otimes p) = \epsilon(l)\epsilon(m) (n \otimes p)\\
&t_2 \colon  H' \to M' \colon t_2(l \otimes n \otimes m \otimes p) = t_M(l) n \ox t_M(m)  p.
\end{align*} 

The isomorphism of $2$-fold split epimorphism, $\phi \colon H' \to H \colon (l \otimes n \otimes m \otimes p) \mapsto lnmp$, defined in \eqref{iso split epi equi}, is a morphism of cat$^2$-Hopf algebras, since we verify that the two following squares commute  \begin{center}
\begin{tikzpicture}[descr/.style={fill=white},scale=0.9]
\node (A) at (0,0) {$H$};
\node (B) at (0,2) {$H'$};
\node (C) at (4,2) {$N'$};
\node (D) at (4,0) {$N$};
  \path[-stealth]
 (B.south) edge node[descr] {$\phi$} (A.north) 
 (C.south) edge node[right] {$\phi|_{N'}$} (D.north);
 \path[->,font=\scriptsize]
(A.east) edge node[above] {$t_N$} (D.west)
(B.east) edge node[above] {$t_1$} (C.west);
\end{tikzpicture}
\begin{tikzpicture}[descr/.style={fill=white},scale=0.9]
\node (A) at (0,0) {$H$};
\node (B) at (0,2) {$H'$};
\node (C) at (4,2) {$M'$};
\node (D) at (4,0) {$M.$};
  \path[-stealth]
 (B.south) edge node[descr] {$\phi$} (A.north) 
 (C.south) edge node[right] {$ \phi|_{M'}$} (D.north);
 \path[->,font=\scriptsize]
(A.east) edge node[above] {$t_M$} (D.west)
(B.east) edge node[above] {$t_2$} (C.west);
\end{tikzpicture}
\end{center} Indeed, for any $l \in (HKer(s_N) \cap HKer(s_M))$, $ n \in ( HKer(s_N) \cap M )$, $m \in (N \cap HKer(s_M))$ and $p \in ( N \cap M)$, we have the following identities
\begin{align*}
\phi|_{N'} \cdot t_1(l \otimes n \otimes m \otimes p) &= \phi|_{N'} (t_N(l)t_N(n_1)mt_N(S(n_2)) \otimes t_N(n_3)p) \\
&= t_N(l)t_N(n_1)mt_N(S(n_2))t_N(n_3)p\\
&= t_N(l)t_N(n)mp\\
&\stackrel{\mathclap{t_N \cdot i_N = Id_N}}{=} \; \; \; \; \; \;  t_N(lnmp)\\
&= t_N \cdot \phi( l \otimes n \otimes m \otimes p).
\end{align*} 
  
Same computation as above makes the right diagram commutes. 

On the other hand, we already know that $\tilde{F} \cdot \tilde{G} = Id_{\sf Act^2(Hopf_{K,coc})}$. By applying $\hat{F} \cdot \hat{G}$ on the Hopf crossed square $(L, M, N, P, h, \lambda, \lambda', \mu, \nu)$, we obtain the following Hopf crossed square 

\begin{center}
\begin{tikzpicture}[descr/.style={fill=white},baseline=(A.base)]
\node (X) at (0,-3) {$HKer(s_1) \cap (N \rtimes P)$};
\node (A) at (0,0) {$HKer(s_1) \cap HKer(s_2)$};
\node (B) at (5,0) {$(M \rtimes P) \cap HKer(s_2)$};
\node (C) at (5,-3) {$(N \rtimes P)  \cap (M \rtimes P),$};
\path[-stealth] 
  (A.south) edge node[right] {$t_2$}  (X.north)
  (X.east) edge node[above] {$t_1$}  (C.west)
   (A.east)  edge node[above] {$t_1$} (B.west)
(B.south) edge node[right] {$t_2$} (C.north);
\end{tikzpicture}
\end{center}
where the intersections are defined as in Appendix \ref{C_4} and 
\begin{align*}
&t_1 \colon  (L\rtimes N)\rtimes (M \rtimes P) \to (M \rtimes P) \colon t_1(l \otimes n \otimes m \otimes p) = \lambda(l)({\nu(n_1)} \triangleright m) \otimes \nu(n_2)p,\\
&t_2 \colon  (L\rtimes N)\rtimes (M \rtimes P) \to (N \rtimes P) \colon t_2(l \otimes n \otimes m \otimes p) =  \lambda'(l)n \otimes \mu(m)p.
\end{align*} 
  
We observe that the canonical isomorphism of Hopf $2$-actions defined in Appendix \ref{C_4} is also an isomorphism of Hopf crossed squares.

Hence, this isomorphism implies that $\hat{F} \cdot \hat{G} = Id_{\sf X^2(Hopf_{K,coc})}$, and we conclude that $\sf X^2(Hopf_{K,coc})$ and $\sf cat^2(Hopf_{K,coc})$ are equivalent.

\end{document}